\newtheorem{theorem}{Theorem}[section]
\newtheorem{corollary}[theorem]{Corollary}
\newtheorem{lemma}[theorem]{Lemma}
\newtheorem{prop}[theorem]{Proposition}
\newtheorem{defn}[theorem]{Definition}
\newtheorem{definition}[theorem]{Definition}
\newtheorem{eg}[theorem]{Example}
\newtheorem{example}[theorem]{Example}
\newtheorem{remark}[theorem]{Remark}
\newcommand{\op}{\overline{\Bbb P}_n}
\newcommand{\up}{\underline{\Bbb P}_n}
\newcommand{\ox}{\overline{X}}
\newcommand{\ux}{\underline{X}}
\newcommand{\ext}{{\it {E}}}
\newcommand{\st}{{\rm {St}}}
\newcommand{\St}{{\rm {St}}}
\newcommand{\lk}{{\rm {Lk}}}
\newcommand{\su}{\subseteq}
\newcommand{\pa}{\partial}
\newcommand{\Z}{{\Bbb Z}}
\newcommand{\E}{{\Bbb E}}
\newcommand{\U}{{\mathcal U}}
\newcommand{\R}{\Bbb R}
\newcommand{\D}{\mathfrak{D}}
\begin{document}

%\title{The Upper and Lower models of random simplicial complexes}
\title{Random simplicial complexes, duality and the critical dimension}%\footnote{To appear in Journal of Topology and Analysis}

\author%[M. Farber and L. Mead]
{Michael Farber} %, Lewis Mead and Tahl Nowik}
\thanks{Michael Farber was partially supported by the EPSRC, by the IIAS and by the Marie Curie Actions, FP7, in the frame of the EURIAS Fellowship Programme. }
\address{Queen Mary University of London}
\author{Lewis Mead} \thanks{Lewis Mead was supported by an EPSRC PhD fellowship.}
\address{Queen Mary University of London}
\author{Tahl Nowik}
\address{Bar Ilan University}
%\date{24 December 2021}

%\address{} 
\begin{abstract} In this paper we discuss two general models of random simplicial complexes which we call {\it the lower and the upper} models. We show that these models are dual to each other with respect to combinatorial Alexander duality. 
The behaviour of the Betti numbers in the lower model is characterised by the notion of {\it critical dimension}, which was introduced by A. Costa and M. Farber in \cite{farber4}: random simplicial complexes in the lower model are homologically approximated by a wedge of spheres of dimension equal the critical dimension. 
In this paper we study the Betti numbers in the upper model and introduce new notions of {\it critical dimension and spread}. 
We prove that (under certain conditions) an upper random simplicial complex is homologically approximated by a wedge of spheres of the critical dimension.

\end{abstract}

\maketitle

\section{Introduction}
The study of random simplicial complexes and random manifolds is motivated by potential applications to modelling of large complex systems in engineering and computer science applications. 
Random topological objects can also be used in pure mathematics for constructing examples of objects with 
rare combinations of topological properties. 

Several models of random manifolds and random simplicial complexes were suggested and studied recently. One may mention random surfaces \cite{PS}, random 3-dimensional manifolds
\cite{DT}, random closed smooth high-dimensional manifolds appearing as configuration spaces  \cite{F};  see \cite{Ksurvey} for a survey.
 
In the present paper we study two very general probabilistic models generating  random simplicial complexes of arbitrary dimension which we call {\it the lower and upper models}.

In the case of the lower model one builds the random simplicial complex inductively, step by step, starting with a random set of vertices, 
then adding randomly edges between the selected vertices, and on the following step adding randomly 2-simplexes (triangles) to the random graph obtained on the previous stage, and so on. 
%This stochastic process leads to what we call in this paper {\it a lower model} of random simplicial complexes. 
Examples are given by the Erd\H{o}s--R\'enyi \cite{ER} random graphs and their high dimensional generalizations,
the Linial, Meshulam, Wallach \cite{linmesh} , \cite{walmesh} random simplicial complexes, as well as random clique complexes \cite{Kahle1}, \cite{Kahle3}. In larger generality the lower model of random simplicial complexes was studied in a series of papers \cite{farber}, \cite{farber2}, \cite{farber3}, \cite{farber4} under the name of {\it multiparameter random simplicial complexes}; the name reflects the fact that the geometric and topological properties of simplicial complexes in this model depend on the set of probability parameters $p_0, p_1, \dots, p_r$ reflecting probabilities with which simplexes of various dimensions are included. 

In the case of the upper model one selects randomly a set of simplexes of various dimensions and then adds all their faces to obtain a random simplicial complex. The upper model was also studied recently by a variety of authors, see for example \cite{CO} and references therein. 

In this paper we show that the upper and lower models are dual to each other. More precisely, the upper random simplicial complex 
is homotopy equivalent to the complement of the lower random simplicial complex in the $(n-1)$-dimensional sphere $\partial \Delta_n$, i.e. we are dealing here with the Alexander duality. Under the duality correspondence the probability parameters $p_\sigma$ should be replaced by $q_{\hat\sigma}=1-p_{\hat\sigma}$ where $\hat\sigma$ is the simplex spanned by the complement of the set of vertexes of $\sigma$. 
We see that the duality matches {\it a sparse} lower model (when $p_\sigma\to 0$) with {\it a dense} upper model (when $p_\sigma\to 1)$ and vice versa. 

In a recent paper \cite{farber4} the authors established an interesting pattern of behaviour of the Betti numbers of random simplicial complexes in the lower model. It was shown that there exists a specific dimension $k$ (called {\it the critical dimension}) such that the Betti number $b_k(Y)$  
is large, the Betti numbers $b_j(Y)$ vanish for $0<j<k$ and are significantly smaller than $b_k(Y)$ for $j>k$. In other words, a random simplicial complex in the lower model can be approximated (homologically) by a wedge of spheres of the critical dimension. 

One of the goals of this paper is to investigate the Betti numbers of random simplicial complexes in the upper model. 
%We show that the Betti numbers in the upper model behave differently. 
We define the notions of {\it the critical dimension} $k^\ast$ and {\it the spread}  $s$ and show that the exponential growth rate of the face numbers $f_\ell(Y)$ is maximal and constant in dimensions $\ell$ satisfying $k^\ast\le \ell\le k^\ast+s$. In the case when the spread is zero 
$s=0$ we show that the critical dimension $k^\ast$ behaves similarly to the lower model: the Betti number $b_{k^\ast}(Y)$ is large and maximal, the Betti numbers $b_j(Y)$ vanish for $0<j<k^\ast$ and $b_j(Y)$ is significantly smaller than $b_{k^\ast}(Y)$ for $j>k^\ast$. 

Preprint \cite{FM} continues to explore the technique of the present paper based on Alexander duality
and focuses on the lower and upper models in {\it the medial regime}, i.e. in the situation when the probability parameters $p_\sigma$ are allowed to approach neither $0$ nor $1$. 
 The results of \cite{FM} show that non-vanishing Betti numbers in the medial regime may occur in a narrow ranges of dimensions which are described.

In this paper we use terminology according to which simplicial complexes are both combinatorial and geometric objects;
this should lead neither to misunderstanding nor to ambiguity. 
Thus, in combinatorial topology, a simplex is a nonempty finite set of points (vertices) and in geometric topology a simplex is a topological space homeomorphic to the convex hull of a finite set of points in general position in the Euclidean space.

\section{Random Hypergraphs}\label{d}
The symbol $[n]$ stands for  the set $ \{0, 1,\dots,n\}$.

We shall consider hypergraphs $X$ with vertex sets contained in $[n]$; each such hypergraph $X$ 
is a collection %$X=\{\sigma\}$ %_{F\subseteq [n]}$ 
of non-empty subsets  $\sigma\subseteq [n]$. 
We shall denote by 
$\Omega_n$ the set of all such hypergraphs. 

Next we define a probability measure on $\Omega_n$.  
Let $$p_\sigma\in [0,1]$$ be a probability parameter associated with each non-empty subset $\sigma\subseteq [n]$. 
Using these parameters $p_\sigma$ we may define a probability function $\mathbb P_n$ on $\Omega_n$ by the formula
\begin{eqnarray}\label{prob0} \mathbb{P}_n(X) =\prod_{\sigma\in X}p_\sigma\cdot\prod_{\sigma\not\in X} q_\sigma.\end{eqnarray}
Here $q_\sigma$ denotes $ 1- p_\sigma$. Formula (\ref{prob0}) can be described by saying that each simplex $\sigma\subseteq [n]$ is included into a random hypergraph $X$ 
with probability $p_\sigma$ 
independently of all other simplexes. Essentially $\mathbb{P}_n$ is a Bernouilli measure on the set of all non-empty subsets of $[n]$.

\section{The upper  and lower models of random simplicial complexes}\label{sec:twomodels}

\subsection{} Let $\Omega^\ast_n\subseteq \Omega_n$ denote the set of all simplicial complexes on the vertex set $[n]=\{0, 1, \dots, n\}$. Recall that a hypergraph $X$ is a simplicial complex if it is closed with respect to taking faces, i.e. if $\sigma\in X$ and $\tau\subseteq \sigma$ imply that 
$\tau\in X$. 

Let $\Delta_n$ denote the simplicial complex consisting of all non-empty subsets of $[n]$. The complex $\Delta_n$ is known as the $n$-dimensional simplex spanned by the set $[n]$. The set $\Omega_n^\ast$ is the set of all subcomplexes of $\Delta_n$. 
%Non-empty subsets of $[n]$ will be also referred to as simplexes. 

There are two natural surjective maps which are the identity on $\Omega_n^\ast$ (in other words, they are retractions) 
\begin{eqnarray}\label{maps}\overline \mu, \, \, \underline \mu: \, \Omega_n\to \Omega^\ast_n\end{eqnarray}
which are defined as follows. 

For a hypergraph $X\in \Omega_n$ we denote by 
$\overline \mu(X) =\overline X$ the smallest simplicial complex in $\Omega^\ast_n$ containing $X$. A simplex $\tau\in \Delta_n$ belongs to $\overline X$ if and only if for some $\sigma\in X$ one has $\sigma\supseteq \tau$. 

On the other hand, the simplicial complex $\underline \mu(X) =\underline X$ is the largest simplicial complex in $\Omega^\ast_n$ contained in $X$.
A simplex $\tau\subseteq [n]$ belongs to $\underline X$ if and only if every simplex $\sigma\subseteq \tau$ belongs to $X$. 

One has 
\begin{eqnarray}
\underline X \subseteq X \subseteq \overline X.
\end{eqnarray}

We shall denote by 
\begin{eqnarray}\label{measures}
\overline {\Bbb P}_n= \overline \mu_\ast({\Bbb P}_n)\quad \mbox{and}\quad \underline {\Bbb P}_n= \underline \mu_\ast({\Bbb P}_n)
\end{eqnarray}
the two probability measures on the space of simplicial complexes $\Omega^\ast_n$ obtained as the push-forwards (or image measures) of the measure (\ref{prob0})
with respect to the maps (\ref{maps}). Explicitly, for a simplicial complex $Y\subseteq \Delta_n$ one has
$$\op(Y) =\sum_{X\in \Omega_n, \, \overline X=Y} \mathbb P_n(X) \quad \mbox{and}\quad \up(Y) =\sum_{X\in \Omega_n, \,\underline X=Y} \mathbb P_n(X).$$

%the upper probability
%$\overline {\Bbb P}_n(Y)$ is the sum 
%$\sum {\Bbb P}_n(X)$ with respect to all hypergraphs $X\in \Omega_n$ satisfying $\overline X =Y$. Similarly, the lower probability 
%$\underline {\Bbb P}_n(Y)$ is the sum 
%$\sum {\Bbb P}_n(X)$ with respect to all hypergraphs $X\in \Omega_n$ satisfying $\underline X =Y$. 

Our goal in this paper is to compare the properties of random simplicial complexes with respect to the two measures (\ref{measures}).

%\vskip 2cm
%\marginpar{reconsider}
Next we make the following remarks. 
\begin{remark}\label{rm11}{\rm 
If $p_\sigma=0$ for some $\sigma$ then a random hypergraph $X$ contains $\sigma$ with probability 0, hence we see that 
$\sigma\not\in \underline X$ with probability 1. Thus, if $p_\sigma=0$, the lower measure $\up$ is supported on the set of simplicial subcomplexes 
$Y\subseteq \Delta_n-\St(\sigma)$. Moreover, if $p_\tau=0$ for every simplex $\tau\supseteq \sigma$ then $\sigma\not\in \overline X$ with probability one and the measure $\op$ is supported on the set of simplicial subcomplexes 
$Y\subseteq \Delta_n-\St(\sigma)$. The symbol $\St(\sigma)$ denotes the star of the simplex $\sigma$, i.e. the set of all simplexes containing $\sigma$. 
}
\end{remark}

\begin{remark}\label{rm1}
{\rm 
Consider now the opposite extreme, $p_\sigma=1$. Then a random hypergraph $X$ contains $\sigma$ with probability 1. This implies 
that $\sigma \in \overline X$ with probability 1. Moreover, if $p_\tau=1$ for every $\tau\subseteq \sigma$ then $\sigma\in \underline X$ with probability 1 and 
the measure $\up$ is supported on the set of simplicial complexes $Y\subseteq \Delta_n$ containing $\sigma$. 

Later (see Corollary \ref{cor57}) we shall establish the following explicit formulae. For a simplicial subcomplex $Y\subseteq \Delta_n$ one has
\begin{equation}
\up(Y) = \prod_{\sigma \in Y} p_\sigma \cdot \prod_{\sigma\in E(Y)} q_\sigma, \quad \mbox{and}\quad
\op(Y) = \prod_{\sigma \in M(Y)} p_\sigma \cdot \prod_{\sigma\not\in Y} q_\sigma,
\end{equation}
where the symbols $E(Y)$ and $M(Y)$ are defined as follows:
}
\end{remark}
\begin{definition}
For a simplicial subcomplex $Y\subseteq \Delta_n$ we denote by $E(Y)$ the set of {\it external simplexes}, i.e. simplexes $\sigma\in \Delta_n$ such that $\sigma\not\in Y$ but the boundary $\partial \sigma$ is contained in $Y$. Besides, the symbol $M(Y)$ denotes the set of {\it maximal simplexes} of $Y$, i.e.\ those which are not proper faces of other simplexes of $Y$. 
\end{definition}

%As an example consider the following probability function\marginpar{this can be removed?}
%$$
%p_\sigma = \left\{ 
%\begin{array}{cc}
%0, & \mbox{if $\dim \sigma \not = d$}, \\
%
%p, & \mbox{if $\dim \sigma = d$}. 
%\end{array}
%\right.
%$$
%Here $d$ is a fixed integer and $p\in [0, 1]$ is a probability parameter which can depend on $n$. 
%A random complex with respect to the upper measure $\op$ is $d$-dimensional and pure. Its $d$-dimensional simplexes are selected independently of each other with probability $p$ and the complex is the union of closures of these simplexes. The probability function is explicitly given by the formula
%$$\op(Y) = p^{f_d(Y)}\cdot q^{\binom {n+1} {d+1} - f_d(Y)}.$$
%We see that it is exactly the probability function of the Linial-Meshulam model. The correspondence between these models is given by the map 
%$$Z=Y\cup \Delta_n^{(d-1)}, \quad \mbox{and}\quad  Y = P(Z).$$
%Here $Z$ is a Linial - Meshulam random complex, and $Y$ is the max-random complex. In other words, $Y$ is the pure part of $Z$. 

\section{Duality between the upper and lower models}\label{du}

In this section we present duality between the upper and lower models; this theme will continue in \S \ref{ad} where we shall show that the the simplicial complexes produced by the upper and lower models are Alexander dual to each other, and moreover, one is homotopy equivalent to the complement of the other in the ambient sphere $\pa\Delta_n$. 

In this section our emphasis is purely topological--combinatorial and probabilistic considerations appear only briefly in Proposition \ref{pn}.

Recall that $\pa \Delta_n$ is the simplicial complex with vertex set 
$[n]=\{0, 1, \dots, n\}$ in 
which simplexes are all nonempty subsets $V\subset [n]$, except $V=[n]$. Clearly the geometric realisation of $\pa\Delta_n$ is homeomorphic to sphere of dimension $n-1$.

For a simplex  $\sigma \in \pa\Delta_n$ we define $\hat{\sigma}$ to be the simplex $[n]-\sigma$. For a hypergraph $X\su \pa\Delta_n$  we denote
by $i(X)$ the image of $X$ under the map $\sigma \mapsto \hat{\sigma}$, i.e. $i(X)=\{\hat{\sigma} : \sigma\in X \}$. Since  $\sigma \mapsto \hat{\sigma}$ is an involution, also $i$ is an involution.  
%Since $\sigma \mapsto \hat{\sigma}$ is a bijection 
We have $$i(X\cap Y)=i(X)\cap i(Y), \quad i(X\cup Y)=i(X)\cup i(Y),$$ and $X \su Y$ if and only if $i(X) \su i(Y)$.

Since $\sigma \su \tau$ if and only if $\hat{\sigma} \supseteq \hat{\tau}$,  we have that a hypergraph $X$ is a simplicial complex if and only if $i(X)$ is an ``anti-complex'', by which we mean that if $\sigma \in i(X)$, and $\tau \supseteq \sigma$ then $\tau \in i(X)$.

A second involution on the set of hypergraphs is the map 
$$j(X)=X^c=\{\sigma \in \pa\Delta_n : \sigma \not\in X \}.$$
We have $X \su Y$ if and only if $j(X) \supseteq j(Y)$, and by De Morgan's rules we have 
$$j(X\cap Y)=j(X)\cup j(Y),\quad j(X\cup Y)=j(X)\cap j(Y).$$ Again, we have  that $X$ is a simplicial complex if and only if $j(X)$ is an anti-complex.
Since $\sigma\mapsto \hat\sigma$ is a bijection we have $i(X^c)=(i(X))^c$ which means $i\circ j =j\circ i$, and so $i\circ j$ is again an involution. Finally, for a hypergraph $X\subset \pa\Delta_n$ we define the \emph{dual} hypergraph
 $$c(X)=i\circ j(X).$$

Combining the properties of $i$ and $j$ mentioned above we get
the following properties of $c(X)$.

\begin{lemma}\label{pr} For hypergraphs $X,Y\subseteq \pa\Delta_n$ we have:
\begin{enumerate} 
\item $\sigma \in X$ if and only if $\hat{\sigma} \not\in c(X)$.
\item $c(c(X))=X$. 
\item $X\su Y$ if and only if $c(X)\supseteq c(Y)$.
\item $c(X\cap Y)=c(X)\cup c(Y)$ and $c(X\cup Y)=c(X)\cap c(Y)$.
\item $X$ is a simplicial complex if and only if $c(X)$ is a simplicial complex.
\end{enumerate}
\end{lemma}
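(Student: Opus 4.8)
The plan is to derive all five items mechanically from the properties of the two involutions $i$ and $j$ recorded above, together with the commutation identity $i\circ j = j\circ i$ (equivalently $i(X^c)=(i(X))^c$), using only the definition $c=i\circ j$. The key elementary fact I would use repeatedly is that, since $\tau\mapsto\hat\tau$ is an involution, $\hat\sigma\in i(Z)$ if and only if $\sigma\in Z$ for every hypergraph $Z$.

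For (1), apply this fact with $Z=j(X)=X^c$: then $\hat\sigma\in c(X)=i(X^c)$ iff $\sigma\in X^c$ iff $\sigma\notin X$, which is (1) after negating both sides. For (2), since $i$ and $j$ are each involutions and they commute, $c\circ c=(i\circ j)\circ(i\circ j)=i\circ(j\circ i)\circ j=i\circ(i\circ j)\circ j=(i\circ i)\circ(j\circ j)=\mathrm{id}$. For (3), chain the two monotonicity statements: $X\subseteq Y$ iff $j(X)\supseteq j(Y)$ (as $j$ is order-reversing), and $j(X)\supseteq j(Y)$ iff $i(j(X))\supseteq i(j(Y))$ (as $i$ is order-preserving), so $X\subseteq Y$ iff $c(X)\supseteq c(Y)$. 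For (4), combine the De Morgan rules for $j$ with the distributivity of $i$ over $\cap$ and $\cup$: $c(X\cap Y)=i(j(X)\cup j(Y))=i(j(X))\cup i(j(Y))=c(X)\cup c(Y)$, and dually $c(X\cup Y)=c(X)\cap c(Y)$. For (5), recall that $W$ is a simplicial complex iff $i(W)$ is an anti-complex; since $i$ is an involution this is the same as saying $W$ is an anti-complex iff $i(W)$ is a simplicial complex. Therefore $X$ is a simplicial complex iff $j(X)$ is an anti-complex iff $i(j(X))=c(X)$ is a simplicial complex.

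This is essentially symbol-pushing, so I do not expect a genuine obstacle. The only points that need a little care are keeping track in (5) of which of ``complex'' and ``anti-complex'' is interchanged by $i$ versus $j$, and making sure the commutation $i\circ j=j\circ i$ — used implicitly in (2)–(4) whenever $i$ and $j$ are reordered — is invoked exactly as stated above. I would present the proof of the five items in the order (1), (2), (3), (4), (5), each as one or two lines, with (2) stated first among the algebraic identities only if it is convenient to cite it when proving that $c$ is an order anti-isomorphism in (3).
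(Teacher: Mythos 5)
Your proposal is correct and follows exactly the route the paper intends: the paper states Lemma \ref{pr} with no written proof beyond the remark that it follows by ``combining the properties of $i$ and $j$ mentioned above,'' and your argument is precisely that combination, spelled out item by item (including the correct use of the involution property of $i$ in item (5) and of the commutation $i\circ j=j\circ i$ in item (2)). Nothing further is needed.
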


The complex $c(X)$ is sometimes known as the Bj\"orner--Tanner dual of a simplicial complex $X$, see \cite{BT} and also \S \ref{ad}. 

\begin{lemma}\label{ch}
For every hypergraph $X\subseteq \pa\Delta_n$ we have 
$c(\overline{X})=\underline{c(X)}$
and similarly $c(\underline{X}) =\overline{c(X)}$.
\end{lemma}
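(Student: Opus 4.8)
The plan is to deduce both identities from the single fact that $c$ is an order-reversing involution on $\Omega_n$ which restricts to an order-reversing involution on the set $\Omega^\ast_n$ of simplicial complexes --- this is precisely the content of parts (2), (3) and (5) of Lemma \ref{pr}. The point is that $X\mapsto \overline X$ and $X\mapsto \underline X$ are characterised by universal properties with respect to inclusion ($\overline X$ is the smallest simplicial complex containing $X$, and $\underline X$ is the largest simplicial complex contained in $X$), and an order-reversing involution carried along the poset interchanges these two universal properties.

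Concretely, I would set $Y=\overline X$ and record its defining properties: $Y$ is a simplicial complex, $X\subseteq Y$, and $Y\subseteq Z$ for every simplicial complex $Z\supseteq X$. Applying $c$ and invoking Lemma \ref{pr}(3) and (5), the set $c(Y)$ is a simplicial complex with $c(Y)\subseteq c(X)$. Given an arbitrary simplicial complex $W\subseteq c(X)$, put $Z=c(W)$; then $Z$ is a simplicial complex, and $c(Z)=W\subseteq c(X)$ forces $X\subseteq Z$ by Lemma \ref{pr}(2) and (3), whence $Y\subseteq Z$ and therefore $W=c(Z)\subseteq c(Y)$. Thus $c(Y)$ is the largest simplicial complex contained in $c(X)$, which by definition means $c(\overline X)=\underline{c(X)}$. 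The second identity then follows formally: substituting $c(X)$ for $X$ and using $c(c(X))=X$ gives $c(\overline{c(X)})=\underline X$, and applying $c$ once more yields $\overline{c(X)}=c(\underline X)$.

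If one prefers a direct computation, I would instead argue elementwise using Lemma \ref{pr}(1): for $\tau\in\pa\Delta_n$ one has $\hat\tau\in c(\overline X)$ iff $\tau\notin\overline X$, i.e. iff no $\sigma\in X$ satisfies $\sigma\supseteq\tau$; and the assignment $\sigma\mapsto\hat\sigma$ is a bijection from $\{\sigma\in\pa\Delta_n:\sigma\supseteq\tau\}$ onto the set of nonempty subsets of $\hat\tau$ (valid because $\tau\neq[n]$, so $\hat\tau\neq\emptyset$, and $\sigma\supseteq\tau\Leftrightarrow\hat\sigma\subseteq\hat\tau$). Translating through Lemma \ref{pr}(1) once more, the condition becomes: every nonempty $\rho\subseteq\hat\tau$ lies in $c(X)$ --- which is exactly $\hat\tau\in\underline{c(X)}$. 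I expect the only place needing genuine care to be this bookkeeping with complements inside $\pa\Delta_n$, namely checking that $\hat\sigma$ and $\hat\tau$ are honest nonempty simplexes of $\pa\Delta_n$ so that the defining conditions for $\overline{(\,\cdot\,)}$ and $\underline{(\,\cdot\,)}$ apply verbatim; the rest is formal manipulation.
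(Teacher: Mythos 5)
Your argument is correct and is essentially the paper's own: both proofs are purely formal consequences of Lemma \ref{pr}(2),(3),(5) together with the extremal characterisations of $\overline{(\,\cdot\,)}$ and $\underline{(\,\cdot\,)}$ --- the paper derives the two inclusions from the sandwich chain $c(\overline X)\subseteq\underline{c(X)}\subseteq\overline{c(X)}\subseteq c(\underline X)$ and reverses one by applying $c$, whereas you verify the universal property directly, which is the same idea in slightly different packaging. Your alternative elementwise computation via Lemma \ref{pr}(1) is also correct and makes a sound check that $\hat\sigma,\hat\tau$ are nonempty proper subsets of $[n]$, but it is not needed.
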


\begin{proof} Since $\underline X\subseteq X\subseteq \overline X$ we have $c(\overline X) \subseteq c(X)\subseteq c(\underline X)$ and hence
\begin{eqnarray}\label{long}
c(\overline X) \subseteq \underline{c(X)} \subseteq c(X)\subseteq \overline{c(X)}\subseteq c(\underline X),
\end{eqnarray} 
using properties (3) and (5). Applying the operator $c$ to the inclusion $c(\overline X) \subseteq \underline{c(X)}$ and replacing $X$ by $c(X)$ we get $\overline{c(X)} \supseteq c(\underline X)$ which is the inverse to the right inclusion in (\ref{long}). Thus, 
$\overline{c(X)} = c(\underline X)$. Replacing here $X$ by $c(X)$ and applying the operator $c$ to both sides we obtain $c(\overline{X})=\underline{c(X)}$. 
%
%
%
%The simplicial complexes $\ox$ and $\ux$ are respectively the minimal simplicial complex containing $X$ and the
%maximal simplicial complex contained in $X$. The statement thus follows from properties (3) and (5) of Lemma  \ref{pr}.

%By the definition of $\overline{\mu}$ we know $X\subset\overline{X}$. Property (3) in the previous lemma therefore tells us that
%$$c(\overline{X})\subset c(X).$$
%Subset relations are preserved under the map $\underline{\mu}$ so by property (5) we trivially see that $c(\overline{X}) \subseteq \underline{c(X)}$.
%
%For the reverse inclusion we let $\sigma$ be a simplex of $\underline{c(X)}$. This is the situation if and only if $\sigma$ and all of its faces are hyperedges in $c(X)$, i.e. $\sigma\subseteq c(X)$. By properties (2) and (3) we see that 
%\[
%X\subseteq c(\sigma) \label{eq:1} \tag{$\ast$}
%\]
%Since $\sigma$ is a simplicial complex we observe by property (5) that $c(\sigma)$ must also be a simplicial complex and therefore $\overline{c(\sigma)} = c(\sigma)$. Combining this with \eqref{eq:1} gives $\overline{X}\subseteq c(\sigma)$. One final application of property (3) then gives us the desired $\sigma\subseteq c(\overline{X})$ which concludes the first half of the proof.
%
%The second equality follows by applying the result we have just proven to the hypergraph $X = c(Y)$.
\end{proof}

\begin{prop}\label{pn}
Given ${\Bbb P}_n$ defined on $\Omega_n$ by probabilities $\{p_\sigma\}_{\sigma \in \pa\Delta_n}$, define a new probability measure ${\Bbb P}'_n$ on $\Omega_n$ by probabilities $\{p'_\sigma\}_{\sigma \in \pa\Delta_n}$ where $$p'_\sigma=q_{\hat{\sigma}}=1-p_{\hat{\sigma}}.$$ 
Then
\begin{enumerate}
\item For every hypergraph $X\subseteq \pa\Delta_n$, $$\mathbb{P}_n(c(X)) = \mathbb{P}'_n(X).$$
\item For every simplicial complex $Y\subseteq \pa\Delta_n$,
 $$\op(c(Y))=\up'(Y)\quad \mbox{and}\quad \up(c(Y))=\op'(Y).$$
\end{enumerate}
\end{prop}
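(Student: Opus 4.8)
The plan is to prove part (1) by a direct computation from the defining formula \eqref{prob0}, and then deduce part (2) by pushing part (1) forward through the retraction maps, using Lemma \ref{ch} to translate between the operators $c$, $\overline{\,\cdot\,}$ and $\underline{\,\cdot\,}$.

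For part (1), I would start from
$\mathbb{P}_n(c(X)) = \prod_{\tau\in c(X)} p_\tau \cdot \prod_{\tau\notin c(X)} q_\tau$,
where both products range over $\tau\in\partial\Delta_n$, and reindex each product via the bijection $\tau\mapsto\hat\tau$. By Lemma \ref{pr}(1) we have $\hat\sigma\in c(X)$ iff $\sigma\notin X$, equivalently $\tau\in c(X)$ iff $\hat\tau\notin X$. Hence the factor $p_\tau$ occurs in the first product exactly when $\hat\tau\notin X$, i.e. when $\sigma:=\hat\tau\notin X$; writing $p_\tau = p_{\hat\sigma} = q'_\sigma$ (since $p'_\sigma = q_{\hat\sigma}$ gives $q'_\sigma = 1-p'_\sigma = p_{\hat\sigma}$), the first product becomes $\prod_{\sigma\notin X} q'_\sigma$. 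Symmetrically the second product, $\prod_{\tau\notin c(X)} q_\tau$, runs over $\tau$ with $\hat\tau\in X$, and $q_\tau = q_{\hat\sigma} = p'_\sigma$, so it becomes $\prod_{\sigma\in X} p'_\sigma$. Multiplying the two gives exactly $\mathbb{P}'_n(X)$. This is essentially bookkeeping once the equivalence $\tau\in c(X)\Leftrightarrow\hat\tau\notin X$ is in hand, and that is precisely Lemma \ref{pr}(1); the one point requiring care is getting the $p$-vs-$q$ and $\sigma$-vs-$\hat\sigma$ swaps consistent, which is the main place an error could creep in.

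For part (2), fix a simplicial complex $Y\subseteq\partial\Delta_n$. By definition of the push-forward measure,
$\overline{\mathbb P}_n(c(Y)) = \sum_{X:\ \overline X = c(Y)} \mathbb P_n(X)$.
Now $\overline X = c(Y)$ iff $c(\overline X) = c(c(Y)) = Y$ by Lemma \ref{pr}(2), and $c(\overline X) = \underline{c(X)}$ by Lemma \ref{ch}; so the condition is $\underline{c(X)} = Y$. Substituting $X = c(Z)$ (which is legitimate since $c$ is an involution on hypergraphs, so $X\mapsto c(X)$ is a bijection of $\Omega_n$) and applying part (1) in the form $\mathbb P_n(c(Z)) = \mathbb P'_n(Z)$, the sum becomes $\sum_{Z:\ \underline Z = Y} \mathbb P'_n(Z) = \underline{\mathbb P}'_n(Y)$. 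This proves $\overline{\mathbb P}_n(c(Y)) = \underline{\mathbb P}'_n(Y)$. The second identity $\underline{\mathbb P}_n(c(Y)) = \overline{\mathbb P}'_n(Y)$ follows the same way, using the other half of Lemma \ref{ch}, namely $c(\underline X) = \overline{c(X)}$; alternatively one can derive it from the first identity by replacing $Y$ with $c(Y)$ and interchanging the roles of $\mathbb P_n$ and $\mathbb P'_n$ (noting that the primed construction applied to $\mathbb P'_n$ returns $\mathbb P_n$, since $q'_{\hat\sigma} = p_\sigma$).

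I do not anticipate a genuine obstacle here; the proposition is a formal consequence of the already-established combinatorial identities. The only real work is the reindexing in part (1), and the only thing to watch in part (2) is that the substitution $X = c(Z)$ is a bijective change of summation variable, so no terms are lost or double-counted.
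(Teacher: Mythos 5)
Your proof is correct and follows essentially the same route as the paper: part (1) is the same reindexing of the Bernoulli product via $\sigma\mapsto\hat\sigma$ and Lemma \ref{pr}(1), and part (2) is the same change of summation variable $X=c(Z)$ combined with Lemma \ref{ch}. No issues.
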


\begin{proof}
(1) By definition of $\mathbb{P}_n$ and by Lemma \ref{pr}(1) we have
\begin{equation*}\begin{split}
\mathbb{P}_n(c(X)) =\prod_{\sigma\in c(X)}p_\sigma\cdot\prod_{\sigma\not\in c(X)} q_\sigma
=\prod_{\hat{\sigma}\not\in X}p_\sigma\cdot\prod_{\hat{\sigma}\in X} q_\sigma
=\prod_{\sigma\not\in X} p_{\hat{\sigma}} \cdot\prod_{\sigma\in X} q_{\hat{\sigma}} \\
=\prod_{\sigma\not\in X} q'_{\sigma} \cdot\prod_{\sigma\in X} p'_{\sigma}=\mathbb{P}'_n(X).
\end{split}\end{equation*}

(2) By (1) and by Lemma \ref{ch}, for every simplicial complex $Y$, 
$$\op(c(Y))=\sum_{\overline X=c(Y)} \mathbb P_n(X)=\sum_{c(\overline X)=Y} \mathbb P'_n(c(X))
=\sum_{\underline{c(X)}=Y} \mathbb P'_n(c(X))=\up'(Y).$$
\end{proof}

%To relate between the explicit formulas we will have for $\op$ and $\up$ we will need the following.

\begin{lemma}\label{me}
If $Y\subseteq\pa\Delta_n$ is a simplicial complex then a simplex $\sigma$ is an external simplex of $Y$ if and only if $\hat{\sigma}$ is a maximal simplex of $c(Y)$, and vice versa. 
\end{lemma}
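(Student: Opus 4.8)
The plan is to translate both conditions—"$\sigma$ is external in $Y$" and "$\hat\sigma$ is maximal in $c(Y)$"—into statements purely about membership of simplexes in $Y$ versus $c(Y)$, using Lemma \ref{pr}(1), and then check the two translations coincide. Recall $\sigma$ is external in $Y$ means $\sigma\notin Y$ but $\tau\in Y$ for every proper face $\tau\subsetneq\sigma$ (equivalently $\pa\sigma\subseteq Y$); and $\hat\sigma$ is maximal in $c(Y)$ means $\hat\sigma\in c(Y)$ but $\rho\notin c(Y)$ for every $\rho\supsetneq\hat\sigma$.

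First I would record the elementary combinatorial fact that, under the involution $\sigma\mapsto\hat\sigma$, proper faces of $\sigma$ correspond exactly to proper cofaces of $\hat\sigma$: for $\tau,\sigma\in\pa\Delta_n$ one has $\tau\subsetneq\sigma$ iff $\hat\tau\supsetneq\hat\sigma$, and moreover $\hat\tau$ ranges over all simplexes properly containing $\hat\sigma$ as $\tau$ ranges over all simplexes properly contained in $\sigma$ (one should note that $\tau\ne\emptyset$ corresponds to $\hat\tau\ne[n]$, so everything stays inside $\pa\Delta_n$; here one uses $n\ge 1$). Then I would apply Lemma \ref{pr}(1) twice: $\sigma\notin Y$ iff $\hat\sigma\in c(Y)$, and for a proper face $\tau\subsetneq\sigma$, $\tau\in Y$ iff $\hat\tau\notin c(Y)$.

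Putting these together: "$\sigma\notin Y$ and every proper face $\tau\subsetneq\sigma$ lies in $Y$" becomes "$\hat\sigma\in c(Y)$ and every proper coface $\hat\tau\supsetneq\hat\sigma$ fails to lie in $c(Y)$", which is precisely the statement that $\hat\sigma$ is a maximal simplex of $c(Y)$. Since $c$ is an involution (Lemma \ref{pr}(2)) and $\hat{\hat\sigma}=\sigma$, the "vice versa" direction is obtained by applying the equivalence just proved to the simplicial complex $c(Y)$ in place of $Y$ and to $\hat\sigma$ in place of $\sigma$, using $c(c(Y))=Y$.

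I do not anticipate a serious obstacle here; the only point requiring a little care is the bookkeeping at the extreme cases—verifying that the empty set and the full vertex set $[n]$ are correctly excluded so that the correspondence $\tau\leftrightarrow\hat\tau$ genuinely matches proper faces of $\sigma$ with proper cofaces of $\hat\sigma$ within $\pa\Delta_n$—and noting that when $\sigma$ is a vertex its only proper face is $\emptyset$, so "$\pa\sigma\subseteq Y$" holds vacuously, matching the fact that $\hat\sigma$ being a facet of $\pa\Delta_n$ has no proper cofaces in $\pa\Delta_n$. Once that is observed the proof is a direct application of Lemma \ref{pr}(1) and (2).
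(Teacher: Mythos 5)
Your proof is correct and follows essentially the same route as the paper: the paper also reduces the claim to the observation that external simplexes are the minimal non-faces of $Y$, and then applies Lemma \ref{pr}(1) together with the order-reversing property $\tau\subseteq\sigma$ iff $\hat\tau\supseteq\hat\sigma$. Your version merely spells out the bookkeeping at the extreme cases, which the paper leaves implicit.
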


\begin{proof}
An external simplex of $Y$ is by definition a \emph{minimal} simplex \emph{not} in $Y$.
Thus the statement follows from Lemma \ref{pr}(1) and the fact that $\sigma \su \tau$ if and only if $\hat{\sigma} \supseteq \hat{\tau}$.
\end{proof}

%\begin{remark}
%
%This involves pairing each simplex  with its complement in $[n]$. 
%Since the complement of $[n]$ itself is the empty set $\varnothing$, and since we have thus far not considered $\varnothing$ to be a simplex, 
%
%
%
%
%Of the three options described above, for the sake of definiteness, we choose the setting without the simplices $[n]$ and $\varnothing$, and so our ambient simplicial complex is $\pa\Delta_n$. 
%we are now faced with several slightly different options: 
%\begin{enumerate}
%\item Exclude both $\varnothing$ and $[n]$ as simplices. The ambient simplicial complex is then $\pa\Delta_n$.
%\item Include both $\varnothing$ and $[n]$ as simplices.
%\item Include both $\varnothing$ and $[n]$, but always assign $p_\varnothing =1$ and then ignore the simplex $\varnothing$. This model essentially excludes $\varnothing$ and includes $[n]$, yet maintains some features of duality. 
%\end{enumerate}
%
%\end{remark}

\section{ The Sandwich Formulae}

%In this section our ambient simplicial complex is still $\pa\Delta_n$. The same results may be obtained for $\Delta_n$ in exactly the same way, where whenever we use duality one should allow $\varnothing$ as the simplex paired to $[n]$.

\subsection{}
Let $A\su B\su \pa\Delta_n$ be two simplicial complexes. 
In both the lower and upper probability measures $\up$ and $\op$,
we ask what is the probability that a random simplicial complex $Y$ 
satisfies $A \su Y \su B$. 
That is, we are interested in finding the probability 
\begin{eqnarray}\label{sup}
\up(A\su Y\su B) = \sum_{A\su Y\su B}\up(Y) = \sum_{A\subseteq \ux \subseteq B} {\Bbb P}_n(X). 
\end{eqnarray}
Here $Y$ denotes a simplicial subcomplex $Y\in\Omega^\ast_n$ and $X$ denotes a hypergraph $X\in \Omega_n$. 
Similarly, we want to calculate explicitly the quantities
\begin{eqnarray}\label{sop}
\op(A\su Y\su B) = \sum_{A\su Y\su B}\op(Y) = \sum_{A\su \ox \su B} {\Bbb P}_n(X). 
\end{eqnarray}

\subsection{} Note that for hypergraphs the answer to a similar question is simple: 
$${\Bbb P}_n(A\subseteq X\subseteq B) = \prod_{\sigma\in A}p_\sigma\cdot \prod_{\sigma\not\in B} q_\sigma.$$
Here $A$, $B$ are fixed hypergraphs and $X$ is a random hypergraph.

\subsection{} Recall that for a simplicial complex $B$, the symbol $E(B)$ denotes the set of all external simplices of $B$, 
i.e. simplices $\sigma\in \pa \Delta_n$ such that $\sigma \not\in B$ but $\partial\sigma \su B$. 

\begin{prop}[Sandwich formula for the lower model]\label{Lsand}
Let $A\subseteq B\subseteq \pa\Delta_n$ be two simplicial complexes. For every subset $S\su E(B)$ let 
$A_S$ be the set of all simplices $\tau \not\in A$ such that $\tau \su \sigma$ for some $\sigma \in S$. 
Let $$P_S =\prod_{\tau \in A_S}p_\tau$$  and  
$$\tilde{P}=\prod_{\tau\in A} p_\tau.$$
Then 
\begin{eqnarray}\label{lowersandwich}
\up(A\su Y\su B) = \tilde{P}\cdot \sum_{S\su E(B)}(-1)^{|S|}P_S,
\end{eqnarray}
where by definition $P_\varnothing=1$. 
\end{prop}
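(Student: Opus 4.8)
The plan is to pass from the sum over simplicial complexes to a sum over hypergraphs via (\ref{sup}), rewrite the event $A\su\ux\su B$ as an explicit Bernoulli event on $X\in\Omega_n$, and then apply inclusion--exclusion over the external simplices of $B$.

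For a simplex $\sigma\in\pa\Delta_n$ write $F_\sigma=\{\tau:\varnothing\ne\tau\su\sigma\}$ for the full simplex on the vertex set of $\sigma$; by the description of $\ux$ given in \S\ref{sec:twomodels}, a simplex $\sigma$ lies in $\ux$ if and only if $F_\sigma\su X$. Using this I would first establish two reformulations. Since $\ux\su X$ always and $A$ is a simplicial complex, $A\su\ux$ holds if and only if $A\su X$ (if $A\su X$, then for $\tau\in A$ every face of $\tau$ lies in $A\su X$, so $\tau\in\ux$). Second, $\ux\su B$ holds if and only if $F_\sigma\not\su X$ for every $\sigma\in E(B)$: the ``only if'' direction is immediate since $\sigma\notin B\supseteq\ux$ forces $\sigma\notin\ux$; for the ``if'' direction, if $\ux\not\su B$ one takes a minimal simplex $\tau\in\ux\setminus B$, observes that its proper faces lie in $\ux$ (as $\ux$ is a complex) and hence, by minimality, in $B$, so that $\pa\tau\su B$ and $\tau\notin B$, i.e. $\tau\in E(B)$, while $\tau\in\ux$ gives $F_\tau\su X$ --- a contradiction. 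Consequently the event $A\su\ux\su B$ on $\Omega_n$ coincides with $\{X:A\su X\}\cap\bigcap_{\sigma\in E(B)}\{X:F_\sigma\not\su X\}$.

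Next I would apply inclusion--exclusion. Writing $\mathcal A=\{X:A\su X\}$ and $\mathcal B_\sigma=\{X:F_\sigma\su X\}$ for $\sigma\in E(B)$, the identity $\mathbf 1_{\mathcal A}\cdot\prod_{\sigma\in E(B)}(1-\mathbf 1_{\mathcal B_\sigma})=\sum_{S\su E(B)}(-1)^{|S|}\,\mathbf 1_{\mathcal A\cap\bigcap_{\sigma\in S}\mathcal B_\sigma}$, after taking $\mathbb P_n$-expectations, yields
$$\up(A\su Y\su B)=\sum_{S\su E(B)}(-1)^{|S|}\,\mathbb P_n\!\left(C_S\su X\right),\qquad C_S:=A\cup\textstyle\bigcup_{\sigma\in S}F_\sigma ,$$
since $\mathcal A\cap\bigcap_{\sigma\in S}\mathcal B_\sigma=\{X:C_S\su X\}$ with $C_S$ a fixed hypergraph. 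By the elementary formula ${\Bbb P}_n(C\su X)=\prod_{\tau\in C}p_\tau$ for hypergraphs (recalled just before the Proposition), this term equals $\prod_{\tau\in C_S}p_\tau$.

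Finally I would note that $\bigcup_{\sigma\in S}F_\sigma$ is precisely the set of simplices $\tau$ with $\tau\su\sigma$ for some $\sigma\in S$, so $C_S$ is the disjoint union of $A$ and $A_S$; hence $\prod_{\tau\in C_S}p_\tau=\tilde P\cdot P_S$, the term $S=\varnothing$ contributing $\tilde P\cdot P_\varnothing=\tilde P$. Substituting into the sum gives (\ref{lowersandwich}). The only step that is not pure bookkeeping is the characterization of when $\ux\su B$ fails in terms of $E(B)$; I would take care to spell out there why a minimal simplex of $\ux\setminus B$ is necessarily external to $B$, and why ``$\tau\in\ux$'' unwinds to ``$F_\tau\su X$''.
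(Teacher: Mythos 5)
Your proposal is correct and follows essentially the same route as the paper: reduce to the hypergraph event $\{A\su X\}\cap\bigcap_{\sigma\in E(B)}\{X\not\supseteq(\text{faces of }\sigma)\}$, apply inclusion--exclusion over $E(B)$, and evaluate each term by the Bernoulli product formula, noting $A\cup A_S$ is a disjoint union. The only cosmetic difference is that you use the full face set $F_\sigma$ where the paper uses $A_{\{\sigma\}}=F_\sigma\setminus A$; these define the same events once intersected with $\{A\su X\}$, and your more explicit verification of the equivalence $\ux\su B\iff F_\sigma\not\su X$ for all $\sigma\in E(B)$ is a welcome elaboration of a step the paper asserts without detail.
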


\begin{proof}Since $A$ and $B$ are simplicial complexes,
a hypergraph $X$ satisfies $A\su\ux\su B$ if and only if $X\supseteq A$ and $X\not\supseteq A_{\{\sigma\}}$ for all $\sigma \in E(B)$. So we have 
\begin{equation*}\begin{split}
\{X  : A \su \ux \su B \} =\{X:X\supseteq A\}\cap \bigcap_{\sigma \in E(B)} \{X  : X\not\supseteq A_{\{\sigma\}}\} \\ = \bigcap_{\sigma \in E(B)} \{X  :  X \supseteq A , X\not\supseteq A_{\{\sigma\}}\}.
\end{split}\end{equation*}
To evaluate the probability of this event we use the inclusion-exclusion formula 
with ambient set $\{X  :  X \supseteq A \}$, so the event 
$\{X  :  X \supseteq A , X\not\supseteq A_{\{\sigma\}}\}$ is the complement of the event 
$\{X  :  X \supseteq A  , X\supseteq A_{\{\sigma\}}\}=\{X  :   X\supseteq A \cup A_{\{\sigma\}}\}$.
We thus get  
\begin{equation*}\begin{split}
{\Bbb P}_n (A \su \ux \su B)   
&= \sum_{S \su E(B)}  (-1)^{|S|}{\Bbb P}_n\Big( \bigcap_{\sigma \in S} \{X  :  X\supseteq A \cup A_{\{\sigma\}}\}\Big)  \\
 &= \sum_{S \su E(B)}  (-1)^{|S|}{\Bbb P}_n\big( X\supseteq A \cup A_S\big)
\\ &= \sum_{S\su E(B)}(-1)^{|S|} \prod_{\tau \in A \cup A_S}p_\tau  
=  \sum_{S\su E(B)}(-1)^{|S|}\tilde{P} P_S.\end{split}\end{equation*}  
The second equality holds 
since  $A_S = \bigcup_{\sigma \in S} A_{\{\sigma\}}$. 
\end{proof}

Using the duality introduced in Section \ref{du}, we obtain the following dual result for $\op$.
Recall that for a simplicial complex $A$, the symbol $M(A)$ denotes the set of maximal simplices in $A$.

\begin{prop}[Sandwich formula for the upper model]\label{Usand} 
Let $A\subseteq B\subseteq \pa\Delta_n$ be two simplicial complexes. For every $S\su M(A)$ let 
$B_S$ be the set of all simplices $\tau \in B$ such that $\tau \supseteq \sigma$ for some $\sigma \in S$. 
Let $$Q_S =\prod_{\tau \in B_S}q_\tau$$  and  
$$\tilde{Q}=\prod_{\tau\not\in B} q_\tau.$$
Then 
\begin{eqnarray}\label{uppersandwich}
\op(A\su Y\su B) = \tilde{Q}\cdot \sum_{S\su M(A)}(-1)^{|S|}Q_S,
\end{eqnarray}
where by definition $Q_\varnothing=1$. 
\end{prop}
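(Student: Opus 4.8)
The plan is to derive Proposition \ref{Usand} from the lower-model sandwich formula (Proposition \ref{Lsand}) by applying the duality developed in Section \ref{du}, rather than by repeating the inclusion–exclusion argument. The key observation is that the operator $c$ reverses inclusions (Lemma \ref{pr}(3)), so for simplicial complexes $A\su B$ we have $c(B)\su c(A)$, and a simplicial complex $Y$ satisfies $A\su Y\su B$ if and only if $c(B)\su c(Y)\su c(A)$. Combined with Proposition \ref{pn}(2), which gives $\op(c(Z))=\up'(Z)$ for every simplicial complex $Z$, this yields
\begin{equation*}
\op(A\su Y\su B)=\sum_{A\su Y\su B}\op(Y)=\sum_{c(B)\su Z\su c(A)}\up'(Z)=\up'\big(c(B)\su Z\su c(A)\big),
\end{equation*}
where $\up'$ is built from the dual parameters $p'_\sigma=q_{\hat\sigma}$ (equivalently $q'_\sigma=p_{\hat\sigma}$).

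Next I would apply Proposition \ref{Lsand} to the pair $c(B)\su c(A)$ with the measure $\up'$. The external simplices of the ``small'' complex $c(B)$ are, by Lemma \ref{me}, exactly the simplices $\hat\sigma$ for $\sigma\in M(B)$ — wait, more carefully: Lemma \ref{me} says $\sigma\in E(Y)$ iff $\hat\sigma\in M(c(Y))$, so applying it with $Y=c(B)$ gives $E(c(B))=\{\hat\sigma:\sigma\in M(B)\}$. Hmm, but the statement of Proposition \ref{Usand} involves $M(A)$, not $M(B)$; the roles get swapped because in $\up'(c(B)\su Z\su c(A))$ the \emph{small} complex is $c(B)$ and the \emph{large} complex is $c(A)$, and Proposition \ref{Lsand} sums over subsets of $E(\text{large complex})=E(c(A))=\{\hat\sigma:\sigma\in M(A)\}$. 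So subsets $S\su M(A)$ correspond bijectively to subsets $\hat S=\{\hat\sigma:\sigma\in S\}\su E(c(A))$.

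Then it remains to translate each ingredient of \eqref{lowersandwich} through the involution. The factor $\tilde P=\prod_{\tau\in c(B)}p'_\tau=\prod_{\tau\in c(B)}q_{\hat\tau}=\prod_{\rho\notin B}q_\rho=\tilde Q$, using Lemma \ref{pr}(1) ($\tau\in c(B)\iff \hat\tau\notin B$) and the substitution $\rho=\hat\tau$. For the sets $A_S$ of Proposition \ref{Lsand} applied to the pair $(c(B),c(A))$: $A_{\hat S}$ is the set of simplices $\tau\notin c(B)$ with $\tau\su\hat\sigma$ for some $\sigma\in S$; writing $\tau=\hat\rho$, the condition $\tau\notin c(B)$ becomes $\rho\in B$ and $\hat\rho\su\hat\sigma$ becomes $\rho\supseteq\sigma$, so $A_{\hat S}=\{\hat\rho:\rho\in B_S\}$ exactly with $B_S$ as defined in Proposition \ref{Usand}. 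Hence $P_{\hat S}=\prod_{\tau\in A_{\hat S}}p'_\tau=\prod_{\rho\in B_S}q_\rho=Q_S$, and since $|\hat S|=|S|$ the signs match. Substituting everything into \eqref{lowersandwich} gives precisely \eqref{uppersandwich}. The only real care needed is bookkeeping: keeping straight which complex plays the role of ``$A$'' and which ``$B$'' after dualizing (the inclusion reversal swaps them), and checking that $c(A)$ and $c(B)$ are genuine simplicial complexes so that Proposition \ref{Lsand} applies — but that is exactly Lemma \ref{pr}(5). I would therefore expect the main (though still modest) obstacle to be presenting these index translations cleanly; there is no analytic difficulty once the dictionary $E\leftrightarrow M$, $p\leftrightarrow q$, $\sigma\leftrightarrow\hat\sigma$ is set up.
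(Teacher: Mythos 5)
Your argument is correct and is precisely the route the paper takes: its proof of Proposition \ref{Usand} is the one-line remark that the statement ``follows from Proposition \ref{Lsand} via the dual measure ${\Bbb P}'_n$ using Proposition \ref{pn} and Lemma \ref{me}'', and your proposal simply fills in that dictionary ($A\su Y\su B \iff c(B)\su c(Y)\su c(A)$, $\tilde P\mapsto\tilde Q$, $A_{\hat S}\mapsto B_S$) correctly. No gaps.
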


\begin{proof}
This follows from Proposition \ref{Lsand} via the dual measure ${\Bbb P}'_n$ using Proposition \ref{pn} and Lemma \ref{me}.
\end{proof}

As a corollary to the proof of Proposition \ref{Lsand}, we get the following characterization of the probability measures $\up$ and $\op$.

\begin{corollary}[Intrinsic characterisation of the upper and lower measures]\label{chr} Let $\lambda$ be a probability measure on the set of simplicial complexes $Y \su \pa\Delta_n$. Let $\{p_\sigma\}_{\sigma\in\pa\Delta_n}$ be a fixed assignment of numbers $0 \leq p_\sigma \leq 1$, and denote $q_\sigma=1-p_\sigma$.
\begin{enumerate}
\item We have $\lambda=\up$ if and only if for every simplicial complex $K$, $\lambda(Y\supseteq K) = \prod_{\sigma\in K}p_\sigma$.
\item We have $\lambda=\op$ if and only if for every simplicial complex $K$,  $\lambda(Y\su K) = \prod_{\sigma\not\in K}q_\sigma$,

\end{enumerate}
\end{corollary}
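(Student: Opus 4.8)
The plan is to extract both equivalences directly from the Sandwich formulae by taking suitable degenerate choices of $A$ and $B$, and then to verify the converse directions by a uniqueness argument showing that the stated conditions determine the measure on all of $\Omega^\ast_n$. For the forward direction of (1), I would apply Proposition \ref{Lsand} with $A=K$ and $B=\Delta_n$. Since $\Delta_n$ has no external simplices, $E(B)=\varnothing$, so the sum over $S\su E(B)$ collapses to the single term $S=\varnothing$ with $P_\varnothing=1$, giving $\up(K\su Y\su\Delta_n)=\tilde P=\prod_{\sigma\in K}p_\sigma$. But $K\su Y\su \Delta_n$ is the same event as $K\su Y$, i.e. $Y\supseteq K$, so $\up(Y\supseteq K)=\prod_{\sigma\in K}p_\sigma$. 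Dually, for the forward direction of (2), I would apply Proposition \ref{Usand} with $A=\varnothing$ and $B=K$; then $M(A)=\varnothing$, the sum collapses to $Q_\varnothing=1$, and $\op(\varnothing\su Y\su K)=\tilde Q=\prod_{\sigma\not\in K}q_\sigma$, and the event $\varnothing\su Y\su K$ is just $Y\su K$. Alternatively (2) follows from (1) by the duality of Proposition \ref{pn}, replacing $p_\sigma$ by $p'_\sigma=q_{\hat\sigma}$ and using $c(Y)\su c(K)\iff Y\supseteq K$ together with Lemma \ref{pr}(3); I would mention this as a remark but give the direct argument as the main line.

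For the converse directions I need to show that a probability measure $\lambda$ on $\Omega^\ast_n$ satisfying, say, $\lambda(Y\supseteq K)=\prod_{\sigma\in K}p_\sigma$ for every simplicial complex $K$ must coincide with $\up$. The idea is that these "upward" probabilities determine $\lambda$ by inclusion–exclusion: for a fixed simplicial complex $Y_0$ one can write the indicator of $\{Y=Y_0\}$ as an alternating sum of indicators of events $\{Y\supseteq K\}$ over simplicial complexes $K$ with $Y_0\su K$ (running over $K$ obtained from $Y_0$ by adjoining subsets of the external simplices of $Y_0$, exactly as in the proof of Proposition \ref{Lsand}). Hence $\lambda(Y=Y_0)$ is a universal linear expression in the quantities $\prod_{\sigma\in K}p_\sigma$, which is the same expression for $\lambda$ and for $\up$; since $\up$ does satisfy the hypothesis by part (1) already proved, $\lambda$ and $\up$ agree on every singleton, hence as measures. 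The argument for (2) is the mirror image, using the events $\{Y\su K\}$ and expanding the indicator of $\{Y=Y_0\}$ as an alternating sum of $\{Y\su K\}$ over $K\su Y_0\cup E(Y_0)$, or again deducing it from (1) by duality.

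I expect the main obstacle to be making the inclusion–exclusion inversion in the converse completely clean: one must identify, for each $Y_0$, the correct finite family of simplicial complexes $K$ over which to sum and check that the Möbius-type coefficients are exactly $(-1)^{|K\setminus Y_0|}$, i.e. that the poset of simplicial complexes between $Y_0$ and $Y_0\cup E(Y_0)$ is a Boolean lattice indexed by subsets of $E(Y_0)$ — this is precisely the combinatorial fact already used implicitly in the proof of Proposition \ref{Lsand} (that $X$ satisfies $A\su\ux\su B$ iff $X\supseteq A$ and $X\not\supseteq A_{\{\sigma\}}$ for all $\sigma\in E(B)$), so I would cite that lemma's proof rather than redo it. Everything else is formal. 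A short alternative for the converse, worth including if space permits, is: the map sending a probability vector $(p_\sigma)$ to the vector of values $(\lambda(Y\supseteq K))_K$ is injective on measures because, by the displayed inversion, the values $\lambda(Y=Y_0)$ are recovered from it; since $\up$ realises the prescribed values, uniqueness forces $\lambda=\up$.
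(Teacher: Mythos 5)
Your proposal is correct and follows essentially the same route as the paper: the forward directions are the degenerate cases of the sandwich formulae, and the converse of (1) is the same inclusion--exclusion over subsets of $E(Y_0)$ (each $Y_0\cup S$ being a simplicial complex) that the paper extracts from the proof of Proposition \ref{Lsand}, while the converse of (2) is handled in the paper by the duality $\lambda'(Y)=\lambda(c(Y))$ together with Proposition \ref{pn}, which is exactly the alternative you mention. One small correction to your optional direct argument for (2): the indexing family there should not be the complexes $K\su Y_0\cup E(Y_0)$ but the complexes $Y_0-S$ for $S\su M(Y_0)$, i.e.\ one deletes subsets of the maximal simplices of $Y_0$ rather than adjoins external ones.
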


\begin{proof}
The ``only if'' direction follows immediately from the definition of the lower and upper models, and is also
 a special case of Propositions \ref{Lsand}, \ref{Usand}. We show the ``if'' direction of (1) and (2) as follows.

 (1) The only place in the proof of Proposition \ref{Lsand} where the probability measure was used was in the equality
${\Bbb P}_n\big( X\supseteq A \cup A_S\big)= \prod_{\sigma \in A \cup A_S}p_\sigma$. We note however that $K=A\cup A_S$ is a \emph{simplicial complex}, and so $X\supseteq K$ if and only if $\ux \supseteq K$, so 
${\Bbb P}_n( X\supseteq K) ={\Bbb P}_n( \ux\supseteq K) = \up(Y \supseteq K)$. So in fact we only needed to know that $\up(Y \supseteq K)= \prod_{\sigma \in K}p_\sigma$.

(2) Define another probability measure $\lambda'$ on simplicial complexes by $\lambda'(Y)=\lambda(c(Y))$.
Then for every simplicial complex $K$, $\lambda'(Y\supseteq K)=\lambda(Y \su c(K))
= \prod_{\sigma\not\in c(K)}q_\sigma= \prod_{\hat{\sigma}\in K}q_\sigma
= \prod_{\sigma\in K}q_{\hat{\sigma}}= \prod_{\sigma\in K}p'_{\sigma}$, where as in Proposition \ref{pn} we define 
$p'_{\sigma}=q_{\hat{\sigma}}$ and the corresponding $\up'$. By (1) applied to $\{p'_\sigma\}_{\sigma\in\pa\Delta_n}$ we get $\lambda'=\up'$ and
so by Proposition \ref{pn}, $\lambda= \op$.
\end{proof}

Next we consider a few special cases where simplified sandwich formulae hold. 

\begin{corollary}\label{ee} 
Let $A \su B \su \pa\Delta_n$ be two simplicial complexes.
\begin{enumerate}
\item In the notation of Proposition \ref{Lsand}, if the sets $A_{\{\sigma\}}$ for $\sigma \in E(B)$ are disjoint, then 
$$\up(A\su Y\su B) = \tilde{P}\cdot  \prod_{\sigma\in E(B)} (1-P_{\{ \sigma \}}).$$
\item In the notation of Proposition \ref{Usand}, if the sets $B_{\{\sigma\}}$ for $\sigma \in M(A)$ are disjoint, then 
$$\op(A\su Y\su B) = \tilde{Q}\cdot  \prod_{\sigma\in M(A)} (1-Q_{\{ \sigma \}}).$$
\end{enumerate}
\end{corollary}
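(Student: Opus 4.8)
The plan is to derive each item directly from the corresponding general sandwich formula (Proposition~\ref{Lsand} for (1), Proposition~\ref{Usand} for (2)) by exploiting the disjointness hypothesis to factor the alternating sum over subsets $S$ into a product over the individual index sets. Since the two statements are formally dual to one another via Proposition~\ref{pn} and Lemma~\ref{me}, it suffices to prove (1) carefully and then remark that (2) follows by the same duality argument used to deduce Proposition~\ref{Usand} from Proposition~\ref{Lsand}; alternatively one can just repeat the computation for the upper model verbatim.

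For (1), I would start from
$$\up(A\su Y\su B) = \tilde{P}\cdot \sum_{S\su E(B)}(-1)^{|S|}P_S,$$
and observe that when the sets $A_{\{\sigma\}}$, $\sigma\in E(B)$, are pairwise disjoint, we have $A_S=\bigsqcup_{\sigma\in S}A_{\{\sigma\}}$ as a \emph{disjoint} union, so $P_S=\prod_{\tau\in A_S}p_\tau=\prod_{\sigma\in S}\prod_{\tau\in A_{\{\sigma\}}}p_\tau=\prod_{\sigma\in S}P_{\{\sigma\}}$. Substituting this into the alternating sum gives
$$\sum_{S\su E(B)}(-1)^{|S|}P_S=\sum_{S\su E(B)}\prod_{\sigma\in S}\bigl(-P_{\{\sigma\}}\bigr)=\prod_{\sigma\in E(B)}\bigl(1-P_{\{\sigma\}}\bigr),$$
the last equality being the standard expansion $\prod_i(1+x_i)=\sum_{S}\prod_{i\in S}x_i$ with $x_\sigma=-P_{\{\sigma\}}$. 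Multiplying by $\tilde{P}$ yields the claimed formula. The only point requiring a line of care is that $P_\varnothing=1$ matches the empty product, which is consistent with the convention stated in Proposition~\ref{Lsand}.

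For (2), the identical manipulation applies: disjointness of the $B_{\{\sigma\}}$, $\sigma\in M(A)$, gives $B_S=\bigsqcup_{\sigma\in S}B_{\{\sigma\}}$, hence $Q_S=\prod_{\sigma\in S}Q_{\{\sigma\}}$, and the alternating sum in Proposition~\ref{Usand} factors as $\prod_{\sigma\in M(A)}(1-Q_{\{\sigma\}})$, so $\op(A\su Y\su B)=\tilde{Q}\cdot\prod_{\sigma\in M(A)}(1-Q_{\{\sigma\}})$. I do not anticipate a genuine obstacle here — this is a routine consequence of the sandwich formulae — but the one substantive thing to verify is precisely the claim that disjointness of the one-element index sets $A_{\{\sigma\}}$ (resp.\ $B_{\{\sigma\}}$) is exactly what is needed to turn $P_S$ (resp.\ $Q_S$) into the product $\prod_{\sigma\in S}P_{\{\sigma\}}$; once that is in place the inclusion–exclusion sum collapses multiplicatively and nothing else is needed.
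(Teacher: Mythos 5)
Your proof is correct and is essentially the paper's own argument: both factor $P_S=\prod_{\sigma\in S}P_{\{\sigma\}}$ using the disjointness of the $A_{\{\sigma\}}$ and then recognise the alternating sum in Proposition \ref{Lsand} as the expansion of $\prod_{\sigma\in E(B)}(1-P_{\{\sigma\}})$, with (2) handled identically. No gaps.
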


\begin{proof}
(1) Since the sets $A_{\{\sigma\}}$ are disjoint, for every $S \su E(B)$ we have 
$P_S=\prod_{\sigma \in S}P_{\{\sigma\}}$. Thus   
$$\prod_{\sigma\in E(B)} (1-P_{\{ \sigma \}})
= \sum_{S\su E(B)}(-1)^{|S|} \prod_{\sigma \in S}P_{\{\sigma\}}  
=  \sum_{S\su E(B)}(-1)^{|S|} P_S.$$ The statement (2) is similar.
\end{proof}

\begin{example}\label{ex3}
{\rm
For a simplex $\sigma$ we have $$\op(\sigma\in Y) = 1-Q_{\{ \sigma \}}
=1-\prod_{\tau \supseteq \sigma}q_\tau.$$
This may be seen from Corollary \ref{ee}(2) taking $A$ to be $\sigma$ as a simplicial complex, i.e. 
$A=\{\tau : \tau \su \sigma\}$, having  $M(A)=\{\sigma\}$. 

It also follows immediately from the definition of the upper model that 
$\op(\sigma\not\in Y) =\prod_{\tau \supseteq \sigma}q_\tau.$

}
\end{example}

\begin{corollary}\label{cor8}
Let $A \su B \su \pa\Delta_n$ be two simplicial complexes.
\begin{enumerate}
\item  If $E(B)\su E(A)$ then 
$\up(A\su Y\su B) = \prod_{\sigma\in A}p_\sigma\cdot \prod_{\sigma\in E(B)} q_\sigma$.
\item If $M(A) \su M(B)$ then 
$\op(A\su Y\su B) = \prod_{\sigma\not\in B} q_\sigma \cdot \prod_{\sigma\in M(A)} p_\sigma $.
\end{enumerate}
\end{corollary}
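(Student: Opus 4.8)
The plan is to derive both parts as special cases of the sandwich formulae (Propositions \ref{Lsand} and \ref{Usand}), using the hypothesis to simplify the alternating sums. Consider part (1). By definition $E(A)$ consists of simplices $\sigma$ with $\sigma\notin A$ but $\pa\sigma\su A$, and for such $\sigma$ the set $A_{\{\sigma\}}$ (simplices $\tau\notin A$ with $\tau\su\sigma$) equals exactly $\{\sigma\}$, since every proper face of $\sigma$ already lies in $A$. So the hypothesis $E(B)\su E(A)$ tells us that for every $\sigma\in E(B)$ we have $A_{\{\sigma\}}=\{\sigma\}$, and in particular these singleton sets are pairwise disjoint. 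Hence Corollary \ref{ee}(1) applies, giving $\up(A\su Y\su B)=\tilde P\cdot\prod_{\sigma\in E(B)}(1-P_{\{\sigma\}})$ with $P_{\{\sigma\}}=\prod_{\tau\in A_{\{\sigma\}}}p_\tau=p_\sigma$. Substituting $\tilde P=\prod_{\sigma\in A}p_\sigma$ and $1-P_{\{\sigma\}}=1-p_\sigma=q_\sigma$ yields the claimed formula.

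Part (2) is dual and I would handle it either by the same direct argument with Proposition \ref{Usand} or, more economically, by invoking the duality of Proposition \ref{pn} together with Lemma \ref{me}. Directly: for $\sigma\in M(A)$ a maximal simplex of $A$, the set $B_{\{\sigma\}}$ of simplices $\tau\in B$ with $\tau\supseteq\sigma$ contains $\sigma$, and the hypothesis $M(A)\su M(B)$ forces $\sigma$ to also be maximal in $B$, so $B_{\{\sigma\}}=\{\sigma\}$. These singletons are disjoint, so Corollary \ref{ee}(2) gives $\op(A\su Y\su B)=\tilde Q\cdot\prod_{\sigma\in M(A)}(1-Q_{\{\sigma\}})$ with $Q_{\{\sigma\}}=q_\sigma$, and with $\tilde Q=\prod_{\sigma\notin B}q_\sigma$ and $1-q_\sigma=p_\sigma$ this is the desired identity. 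Alternatively, apply part (1) to the dual complexes $c(B)\su c(A)$ with parameters $p'_\sigma=q_{\hat\sigma}$: Lemma \ref{me} converts $M(A)\su M(B)$ into $E(c(A))\su E(c(B))$, and Proposition \ref{pn}(2) translates $\up'(c(B)\su Y\su c(A))$ back to $\op(A\su Y\su B)$, with the product over $A$ becoming a product over simplices not in $B$ and the $q$'s and $p'$'s matching up via $p'_\sigma=q_{\hat\sigma}$.

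The only real content is the observation that the containment hypotheses on external, respectively maximal, simplices force the relevant sets $A_{\{\sigma\}}$ (resp. $B_{\{\sigma\}}$) to be singletons and hence disjoint; once that is in place everything reduces to Corollary \ref{ee}. I expect the mild bookkeeping in the duality route of part (2) — keeping straight which complex plays the role of $A$ versus $B$ and checking that $\tilde P$ under the dual parameters becomes $\prod_{\sigma\notin B}q_\sigma$ — to be the only place demanding care, which is why I would probably present part (2) by the direct argument and merely remark that it also follows by duality.
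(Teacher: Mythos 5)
Your proof is correct and follows the paper's own argument: the hypothesis $E(B)\su E(A)$ (resp. $M(A)\su M(B)$) forces each $A_{\{\sigma\}}$ (resp. $B_{\{\sigma\}}$) to be the singleton $\{\sigma\}$, so Corollary \ref{ee} applies with $P_{\{\sigma\}}=p_\sigma$ (resp. $Q_{\{\sigma\}}=q_\sigma$). The paper treats part (2) by the same direct route you chose, dismissing it as ``similar,'' so your remark about the alternative duality derivation is a harmless bonus.
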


\begin{proof}
(1) Since $E(B)\su E(A)$ we have for every $\sigma \in E(B)$, $A_{\{\sigma\}}=\{\sigma\}$.
Thus $P_{\{\sigma\}}=p_\sigma$, and so the factors $1-P_{\{\sigma\}}$ of Corollary  \ref{ee} reduce to
$1-p_\sigma=q_\sigma$. (2) is similar.
\end{proof}

Finally we also obtain an explicit formula for $\up$ and $\op$ themselves:

\begin{corollary}\label{cor57} Let $Y\subseteq \pa\Delta_n$ be a simplicial complex. Then
\begin{enumerate}
\item
$\up(Y)=\prod_{\sigma\in Y}p_\sigma \cdot\prod_{\sigma\in E(Y)} q_\sigma$.
\item
$\op(Y)=\prod_{\sigma\not\in Y} q_\sigma  \cdot \prod_{\tau\in M(Y)} p_\sigma $.
\end{enumerate}
\end{corollary}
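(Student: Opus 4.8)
The plan is to derive Corollary \ref{cor57} as the special case $A=B=Y$ of the two sandwich formulae, since the event $\{A \su Z \su B\}$ with $A=B=Y$ is exactly the event $\{Z=Y\}$, whose probability under $\up$ (resp. $\op$) is by definition $\up(Y)$ (resp. $\op(Y)$).

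For part (1), I would apply Proposition \ref{Lsand} with $A=B=Y$. The key observation is that when $A=B$, every external simplex $\sigma\in E(B)=E(Y)$ satisfies $\pa\sigma\su Y=A$ but $\sigma\notin Y=A$, so the set $A_{\{\sigma\}}=\{\tau\notin A : \tau\su\sigma\}$ reduces to the singleton $\{\sigma\}$ (any proper face of $\sigma$ lies in $\pa\sigma\su A$, hence is excluded). In particular the sets $A_{\{\sigma\}}$, $\sigma\in E(Y)$, are pairwise disjoint, so Corollary \ref{ee}(1) applies and gives $\up(Y)=\up(Y\su Z\su Y)=\tilde P\cdot\prod_{\sigma\in E(Y)}(1-P_{\{\sigma\}})$; since $P_{\{\sigma\}}=p_\sigma$ and $\tilde P=\prod_{\sigma\in Y}p_\sigma$, this is exactly $\prod_{\sigma\in Y}p_\sigma\cdot\prod_{\sigma\in E(Y)}q_\sigma$. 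Alternatively one can cite Corollary \ref{cor8}(1) directly, noting that $A=B=Y$ trivially gives $E(B)\su E(A)$.

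Part (2) is the mirror image: apply Proposition \ref{Usand} (or Corollary \ref{ee}(2), or Corollary \ref{cor8}(2)) with $A=B=Y$. Here for each maximal simplex $\sigma\in M(A)=M(Y)$ the set $B_{\{\sigma\}}=\{\tau\in B : \tau\supseteq\sigma\}$ reduces to $\{\sigma\}$ because $\sigma$ is maximal in $Y=B$, so again these sets are disjoint, $Q_{\{\sigma\}}=q_\sigma$, and $\tilde Q=\prod_{\sigma\notin Y}q_\sigma$, yielding $\op(Y)=\prod_{\sigma\notin Y}q_\sigma\cdot\prod_{\sigma\in M(Y)}p_\sigma$. (I note in passing that the indexing in the statement of (2) prints $p_\sigma$ while the product is over $\tau\in M(Y)$; the intended expression is $\prod_{\tau\in M(Y)}p_\tau$.)

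There is essentially no obstacle here: the whole content has already been done in the sandwich formulae and their corollaries, and the proof is a two-line specialization plus the trivial remark that $\{Z : A\su Z\su A\}=\{A\}$. The only point requiring a sentence of care is the reduction $A_{\{\sigma\}}=\{\sigma\}$ for $\sigma\in E(Y)$ and $B_{\{\sigma\}}=\{\sigma\}$ for $\sigma\in M(Y)$, which is where the hypotheses $E(B)\su E(A)$ and $M(A)\su M(B)$ of Corollary \ref{cor8} are automatically satisfied. I would simply write: ``Apply Corollary \ref{cor8} with $A=B=Y$.''
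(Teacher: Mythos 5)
Your proposal is correct and coincides with the paper's own proof, which consists precisely of the line ``Apply Corollary \ref{cor8} with $A=B=Y$.'' The extra details you supply (that $E(B)\su E(A)$ and $M(A)\su M(B)$ hold trivially when $A=B$, and the typo $p_\sigma$ versus $p_\tau$ in part (2)) are accurate but not needed beyond what the paper already records.
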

\begin{proof} Apply Corollary \ref{cor8} with $A=B=Y$. 
\end{proof}

\section{Links as random complexes}

\subsection{Links in the upper model}\label{sec61} Consider random simplicial complexes $Y$ containing a fixed simplex $\sigma\subset [n]$. 
The link of $\sigma$ in $Y$, 
$$\lk_Y(\sigma)=L\subseteq \Delta',$$ 
is a random simplicial subcomplex of the simplex $\Delta'$, where  $\Delta'$ denotes the simplex
spanned by the vertexes $[n]-\sigma$. Recall that by the definition a simplex $\tau\in \Delta'$ lies in the link $\lk_Y(\sigma)$ if and only if the simplex $\sigma\tau$ belongs to $Y$. Here $\sigma\tau$ denotes the simplex $\sigma\cup \tau$ which geometrically is represented by the join $\sigma\tau=\sigma\ast\tau$.

Below in this section we shall consider the probability measures on the set of simplicial subcomplexes of $\Delta'$ which arise as the push-forwards of the conditional probability measures $$\frac{\op(Y)}{\op(\sigma\in Y)} \quad \mbox{and} \quad\frac{\up(Y)}{\up(\sigma\in Y)}$$ under the map $Y\mapsto \lk_Y(\sigma)$. These two measures will be denoted by $\overline\lambda$ and $\underline\lambda$ 
correspondingly.

\begin{theorem}\label{linksimplex} Let $Y\subseteq \Delta_n$ be a random simplicial complex distributed with respect to the upper measure $\op$ 
with the set of probability parameters $p_\sigma$. Assume that $Y$ contains a fixed simplex $\sigma\in \Delta_n$. 
%Let $\lambda$ denote the push-forward of the conditional probability measure of $\op$ as explained above. 
%Let $\hat\sigma$ denote the complementary simplex, i.e. the simplex spanned by the set $[n]-V(\sigma)$. 
Then $\overline \lambda$ (defined above) equals
%the link $L\subseteq \Delta'$ of $\sigma$ in $Y$ is a random simplicial complex with respect to the measure 
\begin{eqnarray}\label{anomaly1}
%\overline\lambda=
 c_\sigma\cdot \overline {\Bbb P}'+(1-c_\sigma)\cdot \lambda_\varnothing,
\end{eqnarray}
where $\overline {\Bbb P}'$ denotes the upper probability measure on subcomplexes of $\Delta'$ 
with the set of probability parameters 
$p'_\tau=p_{\sigma\tau}.$ 
\end{theorem}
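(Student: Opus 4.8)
\emph{Proof plan.} The plan is to descend to the underlying random hypergraph. Write $Y=\overline X$, where $X\in\Omega_n$ is the Bernoulli random hypergraph with parameters $\{p_\rho\}$, and set
$$X'=\{\,\rho\setminus\sigma\ :\ \rho\in X,\ \rho\supsetneq\sigma\,\},$$
a random hypergraph on the vertex set $[n]\setminus\sigma$. For a nonempty simplex $\tau\su[n]\setminus\sigma$ one has $\tau\in\lk_Y(\sigma)$ iff $\sigma\tau\in Y$, i.e. iff some $\rho\in X$ satisfies $\rho\supseteq\sigma\tau$; as $\tau\neq\varnothing$, any such $\rho$ strictly contains $\sigma$, so $\rho\setminus\sigma\in X'$ and $\rho\setminus\sigma\supseteq\tau$, and conversely. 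Hence $\lk_Y(\sigma)=\overline{X'}$, the closure being taken in $\Delta'$ (with the convention $\lk_Y(\sigma)=\varnothing$ when $\sigma\notin Y$, consistent with $X'=\varnothing$ then). Since the simplices $\sigma\tau$, as $\tau$ ranges over the nonempty subsets of $[n]\setminus\sigma$, are exactly the cofaces $\rho\supsetneq\sigma$ — none of which is $\sigma$ itself — the independence built into $\mathbb P_n$ gives two facts: first, $X'$ is distributed as the Bernoulli random hypergraph on $\Delta'$ with parameters $p'_\tau=p_{\sigma\tau}$, so $\overline{X'}$ is distributed according to $\overline{\Bbb P}'$; and second, the event $A=\{\sigma\in X\}$, of probability $p_\sigma$, is independent of $X'$.

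Next I would rewrite the conditioning event. Since $\sigma\in\overline X$ precisely when some $\rho\in X$ contains $\sigma$ — i.e. $\rho=\sigma$ or $\rho\supsetneq\sigma$ — we get $\{\sigma\in Y\}=A\cup\{X'\neq\varnothing\}$, whereas $\{\lk_Y(\sigma)=\varnothing\}=\{X'=\varnothing\}$. For a simplicial complex $L\su\Delta'$,
$$\overline\lambda(L)=\frac{\op\big(\sigma\in Y,\ \lk_Y(\sigma)=L\big)}{\op(\sigma\in Y)}.$$
If $L\neq\varnothing$, the event $\{\lk_Y(\sigma)=L\}$ already forces $\sigma\in Y$ (any vertex of $L$ witnesses it), so the numerator equals $\op(\lk_Y(\sigma)=L)=\overline{\Bbb P}'(L)$. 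If $L=\varnothing$, the numerator is $\op\big(A\cap\{X'=\varnothing\}\big)=p_\sigma\cdot\overline{\Bbb P}'(\varnothing)$ by independence, using $\overline{\Bbb P}'(\varnothing)=\prod_{\rho\supsetneq\sigma}q_\rho=\mathbb P_n(X'=\varnothing)$.

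It then remains to match this with the stated expression. Set $c_\sigma=1/\op(\sigma\in Y)$ and let $\lambda_\varnothing$ be the unit mass at the empty subcomplex. By Example \ref{ex3}, $1-\op(\sigma\in Y)=\prod_{\rho\supseteq\sigma}q_\rho=q_\sigma\,\overline{\Bbb P}'(\varnothing)$, i.e. $1-c_\sigma=-\,c_\sigma q_\sigma\,\overline{\Bbb P}'(\varnothing)$. On $L\neq\varnothing$ the combination $c_\sigma\overline{\Bbb P}'+(1-c_\sigma)\lambda_\varnothing$ takes the value $c_\sigma\overline{\Bbb P}'(L)$, matching the computation above; on $L=\varnothing$ it takes the value $c_\sigma\overline{\Bbb P}'(\varnothing)+(1-c_\sigma)=c_\sigma\big(\overline{\Bbb P}'(\varnothing)-q_\sigma\overline{\Bbb P}'(\varnothing)\big)=c_\sigma p_\sigma\,\overline{\Bbb P}'(\varnothing)$, again matching. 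This gives the claimed identity.

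The one delicate point — and the reason the formula earns the label ``anomaly'' — is the bookkeeping at the empty subcomplex: $\{\sigma\in Y\}$ is strictly larger than $\{\lk_Y(\sigma)\neq\varnothing\}$ (it also contains the event that $\sigma$ is a chosen simplex of $X$ while no proper coface of $\sigma$ is chosen), so $\overline\lambda$ is \emph{not} simply $\overline{\Bbb P}'$ conditioned on non-emptiness. Note $c_\sigma\geq 1$, so $1-c_\sigma\leq 0$: the displayed expression is a priori only a signed measure, yet it is a genuine probability measure because $\overline{\Bbb P}'$ already puts mass $\prod_{\rho\supsetneq\sigma}q_\rho\geq\prod_{\rho\supseteq\sigma}q_\rho=(c_\sigma-1)/c_\sigma$ on $\varnothing$, exactly enough to keep $\overline\lambda(\varnothing)\geq 0$. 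If one prefers to avoid pushing the measure forward by hand, an equivalent route is to compute $\overline\lambda(\lk_Y(\sigma)\su K)$ for each $K\su\Delta'$ by grouping the cofaces $\rho\supsetneq\sigma$ according to whether $\rho\setminus\sigma\in K$; this yields $\big(\prod_{\tau\notin K}q_{\sigma\tau}\big)\big(1-q_\sigma\prod_{\tau\in K}q_{\sigma\tau}\big)/\op(\sigma\in Y)$, which one checks equals $c_\sigma\,\overline{\Bbb P}'(L\su K)+(1-c_\sigma)$ by means of the intrinsic characterisation of $\overline{\Bbb P}'$ in Corollary \ref{chr}(2) together with the same identity $1-\op(\sigma\in Y)=q_\sigma\overline{\Bbb P}'(\varnothing)$.
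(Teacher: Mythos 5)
Your proof is correct, and it takes a genuinely different route from the paper's. The paper stays at the level of simplicial complexes: it rewrites the event $\{\sigma\in Y,\ \lk_Y(\sigma)=L\}$ as the sandwich event $\{\sigma\ast L\subseteq Y\subseteq\sigma\ast L\cup(\partial\sigma\ast\Delta')\}$, observes that $M(\sigma\ast L)\subseteq M(\sigma\ast L\cup\partial\sigma\ast\Delta')$ when $L\neq\varnothing$, and invokes Corollary \ref{cor8}(2) to evaluate the numerator, treating $L=\varnothing$ separately. You instead descend to the underlying Bernoulli hypergraph $X$ and exhibit $\lk_{\overline X}(\sigma)$ as $\overline{X'}$ for the sub-hypergraph $X'$ indexed by the strict cofaces of $\sigma$; since the coordinates $\{\sigma\tau\}_{\tau\in\Delta'}$ and the coordinate $\sigma$ are disjoint, you get for free that $\overline{X'}\sim\overline{\Bbb P}'$ and that $\{\sigma\in X\}$ is independent of $X'$, after which the computation of both numerators is immediate. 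Your decomposition $\{\sigma\in Y\}=\{\sigma\in X\}\cup\{X'\neq\varnothing\}$ also isolates exactly where the anomaly at $\varnothing$ comes from (the event that $\sigma$ itself is selected but no proper coface is), which the paper's calculation produces but does not explain; the price is that your argument is specific to this situation, whereas the paper's route illustrates how the sandwich formulae are meant to be deployed. The algebraic reconciliation at $L=\varnothing$ via $1-\op(\sigma\in Y)=q_\sigma\,\overline{\Bbb P}'(\varnothing)$ is the same identity the paper uses in the form $c_\sigma=(1-q_\sigma\op'(\varnothing))^{-1}$.
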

%on the set of subcomplexes of $\hat \sigma$, 
%Here $\tau\subseteq \Delta'$ is a simplex and $\sigma\tau$ denotes the simplex $\sigma\cup \tau$ (which geometrically is represented by the join $\sigma\tau=\sigma\ast\tau$).
The symbol $\lambda_\varnothing$ in (\ref{anomaly1}) denotes the measure which is supported on the empty subcomplex, i.e. 
$$\lambda_\varnothing(L) = \left\{\begin{array}{lll}
1, & \mbox{for} & L=\varnothing,\\ \\
0, & \mbox{otherwise}.&
\end{array}
\right. 
$$
The symbol $c_\sigma$ in (\ref{anomaly1}) stands for 
$$c_\sigma = \left(1- \prod_{\tau\supseteq \sigma} q_{\tau}\right)^{-1}=\, \op(\sigma\in Y)^{-1}\ge 1,$$
see Example \ref{ex3}.

\begin{proof} %Let $\sigma\subseteq \Delta_n$ be a fixed simplex. Let $\Delta'=\hat \sigma$ denote the complementary simplex. 
%For a subcomplex $L\subseteq \Delta'$ we denote by $\lambda(L)$ the probability that $L$ is the link of a random simplicial complex in the upper model
%with probability parameters $p_\sigma$. Hence, 
We have
$$\overline\lambda(L) = \frac{\op(\sigma\in  Y\, \, \& \, \, \lk_Y(\sigma)=L)}{\op(\sigma\in  Y)}= 
\frac{\op(\sigma\ast L \subseteq  Y\subseteq \sigma\ast L\cup (\partial \sigma\ast\Delta'))}{\op(\sigma\in Y)}
$$
%We may apply the Sandwich formula of Proposition \ref{Usand}. 
Assuming that $L\not=\varnothing$ we see that the maximal simplexes of $\sigma\ast L$ are of the form $\sigma \ast \tau=\sigma\tau$ where $\tau$ is a maximal simplex of $L$. These are also maximal simplexes of $\sigma\ast L\cup \partial \sigma \ast \Delta'$. Hence applying Corollary \ref{cor8}(2) we find (assuming that 
$L\not=\varnothing$)
$$\overline{\lambda}(L) = c_\sigma\cdot \prod_{\tau\in M(L)}p_{\sigma\tau}\cdot \prod_{  \tau\in \Delta'-L}q_{\sigma\tau}=c_\sigma\cdot \prod_{\tau\in M(L)} p'_\tau\cdot \prod_{\tau\in \Delta'-L}q'_\tau = c_\sigma \cdot \overline {\Bbb P}'(L),$$
where 
%for $\tau\in \Delta'$ we have
%$$p'_\tau= p_{\sigma\tau}, \quad q'_\tau= 1 - p'_\tau,$$ and 
$ c_\sigma= \op(\sigma\in Y)^{-1}$. %=\left(1-\prod_{\tau\supseteq \sigma}q_\tau   \right)^{-1}.$$
Besides, for $L=\varnothing$ we have 
$$\overline{\lambda}(\varnothing) = \frac{\op(\sigma\subseteq Y\subseteq \sigma\cup (\partial \sigma \ast\Delta'))}{\op(\sigma\in Y)} = c_\sigma  p_\sigma\prod_{\tau\in \Delta'} q_{\sigma\tau}= c_\sigma p_\sigma\op'(\varnothing).$$
Thus, noting $c_\sigma =(1-q_\sigma\op'(\varnothing))^{-1}$,
we obtain (\ref{anomaly1}).
%\begin{eqnarray}\label{anomaly1}
%\lambda= c_\sigma \cdot \overline{\Bbb P}' +(1-c_\sigma)\cdot \lambda_{\emptyset}
%\end{eqnarray}
\end{proof}

Note that $\overline\lambda$ is an upper type probability measure with anomaly at $\varnothing$.

\subsection{Links in the lower model} Next we describe the measure $\underline \lambda$ as defined in \S \ref{sec61}. 
It is the push-forward of the conditional probability measure on the set of simplicial complexes $Y\subset \Delta_n$ containing a given simplex $\sigma$ with respect to the map $Y\mapsto \lk_Y(\sigma)$. 

%Let $\sigma\subseteq \Delta_n$ be a fixed simplex. 
Denote by $\Delta'\subseteq \Delta_n$ the simplex spanned by the 
complementary vertices to vertices of $\sigma$. 
%Consider random simplicial complexes $Y$ containing $\sigma$. Each such simplicial complex $Y$ has a 
The link $L=\lk_Y(\sigma)$ is a random simplicial subcomplex of $\Delta'$. 
\begin{theorem}\label{linkslower}
The measure $\underline\lambda$ is the lower probability measure on the subcomplexes of $\Delta'$
%The complex $L\subseteq \Delta'$ is a random complex with respect to the lower probability measure 
with parameters
\begin{eqnarray}\label{linklower}
p'_\tau = p_\tau\cdot \prod_{\nu\subseteq \sigma}p_{\nu\tau}, \quad \tau\in \Delta'.
\end{eqnarray}
In the product $\nu$ runs over all  faces of $\sigma$.
\end{theorem}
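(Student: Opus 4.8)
The plan is to use the intrinsic characterisation of the lower measure (Corollary \ref{chr}(1)): it suffices to show that for every simplicial complex $K \subseteq \Delta'$ one has $\underline\lambda(L \supseteq K) = \prod_{\tau \in K} p'_\tau$, where $p'_\tau$ is the parameter in \eqref{linklower}. First I would unwind the definition of $\underline\lambda$: since it is the push-forward under $Y \mapsto \lk_Y(\sigma)$ of the conditional measure $\up(Y)/\up(\sigma \in Y)$, we have
\begin{equation*}
\underline\lambda(L \supseteq K) = \frac{\up\bigl(\sigma \in Y \ \&\ \lk_Y(\sigma) \supseteq K\bigr)}{\up(\sigma \in Y)}.
\end{equation*}
The key translation is that the event $\{\sigma \in Y\ \&\ \lk_Y(\sigma) \supseteq K\}$ is the event that $Y$ contains the simplicial complex $\sigma \ast K'$, where $K'$ is the subcomplex of $\Delta'$ obtained by closing $K$ under faces (note $K$ is already a complex), together with the requirement $\sigma \in Y$ — but containing $\sigma \ast K$ automatically forces $\sigma \in Y$ when $K$ is nonempty, and when $K = \varnothing$ the event is just $\{\sigma \in Y\}$. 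So for $K \neq \varnothing$, $\underline\lambda(L \supseteq K) = \up(Y \supseteq \sigma \ast K)/\up(\sigma \in Y)$.

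Next I would compute each factor using Corollary \ref{chr}(1) (equivalently the $A=\sigma\ast K$, $B = \Delta_n$ case of the sandwich formula, or directly the definition of $\up$). We get $\up(Y \supseteq \sigma \ast K) = \prod_{\rho \in \sigma \ast K} p_\rho$. The simplices of $\sigma \ast K$ are exactly the joins $\nu\tau = \nu \cup \tau$ where $\nu \subseteq \sigma$ (possibly $\nu = \varnothing$) and $\tau \in K \cup \{\varnothing\}$, with $(\nu,\tau) \neq (\varnothing,\varnothing)$. Separating the terms with $\tau = \varnothing$ (these give $\prod_{\varnothing \neq \nu \subseteq \sigma} p_\nu = \up(\sigma \in Y)$... — actually one must be careful: $\up(\sigma\in Y) = \prod_{\nu\subseteq\sigma} p_\nu$ over nonempty $\nu$, by Corollary \ref{chr}(1) applied to $K = \{\text{faces of }\sigma\}$) from the terms with $\tau \neq \varnothing$, the quotient becomes
\begin{equation*}
\frac{\up(Y \supseteq \sigma \ast K)}{\up(\sigma \in Y)} = \prod_{\tau \in K}\ \prod_{\nu \subseteq \sigma} p_{\nu\tau} = \prod_{\tau \in K}\Bigl(p_\tau \cdot \prod_{\varnothing \neq \nu \subseteq \sigma} p_{\nu\tau}\Bigr),
\end{equation*}
where the inner product over $\nu \subseteq \sigma$ now includes $\nu = \varnothing$, contributing the factor $p_\tau$. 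This is precisely $\prod_{\tau \in K} p'_\tau$ with $p'_\tau$ as in \eqref{linklower}. The $K = \varnothing$ case is trivially consistent since both sides equal $1$. By Corollary \ref{chr}(1), $\underline\lambda$ is the lower measure with parameters $p'_\tau$.

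The main obstacle — really the only delicate point — is the bookkeeping of the join decomposition and the empty face: making sure that "$Y$ contains $\sigma \ast K$" is the correct event (i.e. that $\lk_Y(\sigma) \supseteq K$ together with $\sigma \in Y$ is equivalent to $\sigma\nu \in Y$ for all faces $\nu$ of $\sigma$ and all $\tau \in K$, which uses that $Y$ is a simplicial complex so containing $\sigma\tau$ forces containing all its faces $\nu\tau$), and that the normalising denominator $\up(\sigma \in Y) = \prod_{\varnothing\neq\nu\subseteq\sigma} p_\nu$ exactly cancels the $\tau = \varnothing$ part of the numerator. Once the indexing is set up cleanly, the identity $p'_\tau = p_\tau \prod_{\nu \subseteq \sigma} p_{\nu\tau}$ (with $\nu$ ranging over all faces of $\sigma$, the $\nu = \varnothing$ term being $p_\tau$) drops out immediately. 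I would also remark that, unlike the upper case in Theorem \ref{linksimplex}, here there is no anomaly at $\varnothing$: conditioning on $\sigma \in Y$ in the lower model interacts cleanly with the link operation because the lower measure is characterised by its values on upward events $\{Y \supseteq K\}$, which are stable under taking links.
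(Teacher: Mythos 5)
Your proposal is correct and follows essentially the same route as the paper: both reduce the claim to the intrinsic characterisation of the lower measure (Corollary \ref{chr}(1)), identify the event $\{\sigma\in Y\ \&\ \lk_Y(\sigma)\supseteq K\}$ with $\{Y\supseteq \sigma\ast K\}$, and cancel the factor $\up(\sigma\in Y)=\prod_{\nu\subseteq\sigma}p_\nu$ against the $\tau=\varnothing$ part of $\prod_{\rho\in\sigma\ast K}p_\rho$ to obtain $\prod_{\tau\in K}p'_\tau$. Your extra care with the empty face and the join decomposition is exactly the bookkeeping the paper performs implicitly.
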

\begin{proof}
%Denote by $\lambda(L)$ the probability distribution of the link of $\sigma$, where $L\subseteq \Delta'$ is a subcomplex. 
We wish to compute probability that 
the link $L$ contains a given subcomplex $A\subseteq \Delta'$, i.e. 
$$\underline\lambda(A\subseteq L) = 
\sum_{A\subseteq L} \underline\lambda(A).$$
Using Corollary \ref{chr}(1), we find
\begin{eqnarray*}\underline\lambda(A\subseteq L) &=& \up(\sigma\in Y)^{-1}\cdot \up(\sigma\ast A\subseteq Y) \\ &=&
\left(\prod_{\nu\subseteq \sigma}p_\nu\right)^{-1}\cdot \left(\prod_{\nu\subseteq \sigma}p_\nu\cdot \prod_{\tau\subseteq A}p_\tau\cdot 
\prod_{\nu\subseteq \sigma, \tau\subseteq A} p_{\nu\tau}\right)\\
&=& \prod_{\tau\subseteq A} \left[p_\tau \cdot \prod_{\nu\subseteq \sigma} p_{\nu\tau}\right] = \prod_{\tau\subseteq A}p_\tau'.
\end{eqnarray*}
Our statement now follows from the intrinsic characterisation of the lower measure, see Corollary \ref{chr}(1).
\end{proof}
\begin{example}{\rm 
Consider the special case when the probability parameters $p_\tau=p_i$ depends only in the dimension $i=\dim \tau$. Since 
$$\dim (\nu\tau) = \dim \nu + \dim \tau +1,$$ 
and there are $\binom {k+1}{j+1}$ simplexes $\nu\subseteq \sigma$ of dimension $j=\dim \nu$, where $k=\dim \sigma$,
we see that
formula (\ref{linklower}) can be rewritten 
as follows
\begin{eqnarray}\label{linklower1}
p'_i = p_i \cdot \prod_{j=0}^{k}p_{i+j+1}^{\binom {k+1} {j+1}},
\end{eqnarray}
 This is consistent with Lemma 3.2 from \cite{farber2}. 
 }
 \end{example}

\section{Intersections and unions of Random Complexes}

\begin{lemma}
Consider the union $Y\cup Y'$ of two independent random simplicial complexes $Y, Y'\subseteq \Delta_n$ 
where $Y$ is sampled according 
to the upper probability measure $\overline {\Bbb P}$ with respect to a set of probability parameters $q_\sigma$ and $Y'$ is sampled 
according 
to the upper probability measure $\overline {\Bbb P}$ with respect to a set of probability parameters $q'_\sigma$. Then the union 
$Y\cup Y'\subseteq \Delta_n$ is a random simplical complex which is described by the upper probability measure with respect to the set of probability parameters $q_\sigma \cdot q'_\sigma$. In other words, the union $Y\cup Y'$ is an upper random simplicial complex with the set of probability parameters 
$$\sigma \mapsto p_\sigma +p'_\sigma - p_\sigma\cdot p'_\sigma.$$
where $p_\sigma =1-q_\sigma$ and $p'_\sigma =1-q'_\sigma$. 
\end{lemma}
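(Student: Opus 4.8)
The plan is to use the intrinsic characterisation of the upper measure from Corollary \ref{chr}(2), which says that a probability measure $\lambda$ on simplicial subcomplexes of $\pa\Delta_n$ equals $\op$ (for a parameter set $\{q_\sigma\}$) if and only if $\lambda(Z\su K)=\prod_{\sigma\not\in K}q_\sigma$ for every simplicial complex $K$. So I would let $Z=Y\cup Y'$, fix an arbitrary simplicial complex $K\su\Delta_n$, and compute $\mathbb P(Z\su K)$, aiming to show it equals $\prod_{\sigma\not\in K}(q_\sigma q'_\sigma)$.

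First I would observe that $Y\cup Y'\su K$ holds if and only if $Y\su K$ and $Y'\su K$ simultaneously: the union of two complexes is contained in $K$ iff each summand is. Since $Y$ and $Y'$ are independent, this gives
\begin{equation*}
\mathbb P(Y\cup Y'\su K)=\mathbb P(Y\su K)\cdot \mathbb P(Y'\su K).
\end{equation*}
Next I would apply the ``only if'' direction of Corollary \ref{chr}(2) to each factor: since $Y$ is distributed according to $\op$ with parameters $\{q_\sigma\}$ we have $\mathbb P(Y\su K)=\prod_{\sigma\not\in K}q_\sigma$, and likewise $\mathbb P(Y'\su K)=\prod_{\sigma\not\in K}q'_\sigma$. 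Multiplying, $\mathbb P(Y\cup Y'\su K)=\prod_{\sigma\not\in K}(q_\sigma q'_\sigma)$.

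Finally, since this identity holds for every simplicial complex $K$, the ``if'' direction of Corollary \ref{chr}(2), applied with the parameter set $\{q_\sigma q'_\sigma\}_{\sigma\in\pa\Delta_n}$ (each of these numbers lies in $[0,1]$), shows that $Y\cup Y'$ is distributed according to $\op$ with those parameters. Translating back via $p_\sigma=1-q_\sigma$ and $p'_\sigma=1-q'_\sigma$, the new inclusion probability $1-q_\sigma q'_\sigma$ equals $p_\sigma+p'_\sigma-p_\sigma p'_\sigma$, as claimed. I do not anticipate a genuine obstacle here; the only point requiring minor care is the set-theoretic observation that $Y\cup Y'\su K \iff Y\su K \text{ and } Y'\su K$ and the verification that the product measure structure makes the two events independent, both of which are immediate. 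One could alternatively give a direct computation via the sandwich formula, but the characterisation route is cleanest.
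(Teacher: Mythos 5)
Your proposal is correct and follows essentially the same route as the paper: both reduce to the identity $\mathbb P(Y\cup Y'\su K)=\prod_{\sigma\not\in K}q_\sigma q'_\sigma$ via the equivalence $Y\cup Y'\su K\iff Y\su K\ \&\ Y'\su K$, independence, and then invoke the intrinsic characterisation of Corollary \ref{chr}(2). No gaps.
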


\begin{proof} Let $B\subseteq \Delta_n$
be a simplicial complex. 
Clearly $ Y\cup Y'\subseteq B$ is equivalent to $Y\subseteq B$ and $Y'\subseteq B$. 
Since $Y$ and $Y'$ are independent, the probability that the union $Y\cup Y'$ is contained in $B$ equals the product 
\begin{eqnarray}
\overline {\Bbb P}(Y\subseteq B) \cdot \overline {\Bbb P'}(Y'\subseteq B) \, =\, \prod_{\sigma\not\in B}q_\sigma \cdot \prod_{\sigma\not\in B}q'_i
\, =\, \prod_{\sigma\not\in B}\left(q_\sigma \cdot q'_\sigma\right).
\end{eqnarray}
Our statement now follows from Corollary \ref{chr}(2). 
\end{proof}

The following Lemma generalises Lemma 4.1 from \cite{farber2}.
%\subsection{} 
\begin{lemma}
Consider two sets of probability parameters $p_\sigma, p'_\sigma\in [0,1]$ associated to each simplex $\sigma\subseteq \Delta_n$.  
Let $\underline {\Bbb P}$ and $\underline {\Bbb P'}$ denote the lower probability measures determined by the probability parameters $p_\sigma$ and $p'_\sigma$. Suppose that $Y, Y'\subseteq \Delta_n$ are two independent random simplicial complexes where $Y$ is described according to the probability $\underline {\Bbb P}$ and $Y'$
is sampled according to $\underline {\Bbb P}'$. 
Then the intersection $Y\cap Y'\subseteq \Delta_n$ is a random simplical complex which is described by the lower probability measure with respect to the set of probability parameters $p_\sigma \cdot p'_\sigma$. 
\end{lemma}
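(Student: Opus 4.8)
The plan is to mimic the proof of the preceding lemma, replacing the use of Corollary \ref{chr}(2) by Corollary \ref{chr}(1). First I would record that the intersection $Y\cap Y'$ of two simplicial complexes is again a simplicial complex, so it is a well-defined random element of $\Omega^\ast_n$, and that by Corollary \ref{chr}(1) it suffices to check that for every simplicial complex $K\su \Delta_n$ one has
$$\mathbb P\big(K\su Y\cap Y'\big)=\prod_{\sigma\in K}\big(p_\sigma\cdot p'_\sigma\big).$$

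Next I would observe that $K\su Y\cap Y'$ holds if and only if $K\su Y$ and $K\su Y'$ hold simultaneously. Since $Y$ and $Y'$ are independent, the probability of this joint event factors as $\underline{\Bbb P}(K\su Y)\cdot \underline{\Bbb P}'(K\su Y')$. Applying the ``only if'' direction of Corollary \ref{chr}(1) (equivalently, Proposition \ref{Lsand} with $A=B=K$) to each factor gives $\prod_{\sigma\in K}p_\sigma$ and $\prod_{\sigma\in K}p'_\sigma$ respectively, whose product is $\prod_{\sigma\in K}(p_\sigma p'_\sigma)$, as required. Finally, invoking the ``if'' direction of Corollary \ref{chr}(1) with the assignment $\sigma\mapsto p_\sigma p'_\sigma$ identifies the law of $Y\cap Y'$ with the lower measure for these parameters.

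I do not expect any genuine obstacle here; the two points that deserve a word of care are that the relevant event is a condition on \emph{containing} a subcomplex, so that Corollary \ref{chr}(1) applies directly without passing through the inclusion–exclusion of the general sandwich formula, and that independence is used exactly once, to split the probability of the conjunction. As an alternative one could deduce the statement from the union lemma via the duality of Section \ref{du} together with Lemma \ref{ch}, using $c(Y\cap Y')=c(Y)\cup c(Y')$ from Lemma \ref{pr}(4) and the relation $q_{\hat\sigma}=1-p_{\hat\sigma}$ of Proposition \ref{pn}, but the direct argument above is shorter and self-contained.
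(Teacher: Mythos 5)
Your proposal is correct and coincides with the paper's own argument: both reduce the claim, via the intrinsic characterisation of the lower measure (Corollary \ref{chr}(1)), to computing $\mathbb P(K\su Y\cap Y')$ for a simplicial complex $K$, split this by independence into $\underline{\Bbb P}(K\su Y)\cdot\underline{\Bbb P}'(K\su Y')=\prod_{\sigma\in K}p_\sigma p'_\sigma$, and conclude. No issues.
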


%Consider 
%Let $Y\subseteq \Delta_n$ be a random simplicial complex with respect to the lower measure $\underline {\Bbb P}$ and with respect to the probability parameters $p_\sigma$. 
%Similarly, let $Y'\subseteq \Delta_n$ be an independent random simplicial complex with respect to the lower measure $\underline {\Bbb P'}$ and with respect to the set of probability parameters $p'_\sigma$. 
%

\begin{proof} Let $A\subseteq \Delta_n$
be a simplicial complex. 
Clearly $A\subseteq Y\cap Y'$ is equivalent to $A\subseteq Y$ and $A\subseteq Y'$. 
Since $Y$ and $Y'$ are independent we see that the probability that the intersection $Y\cap Y'$ contains $A$ equals the product 
\begin{eqnarray}
\underline {\Bbb P}(A\subseteq Y) \cdot \underline {\Bbb P'}(A\subseteq Y') = \prod_{\sigma\in A}p_\sigma \cdot \prod_{\sigma\in A}p'_\sigma
=\prod_{\sigma\in A}\left(p_\sigma \cdot p'_\sigma\right).
\end{eqnarray}
Our statement now follows from Corollary \ref{chr}(1). 
\end{proof}

\section{Random pure complexes}\label{sec8}\label{sec9}

In this section we consider an interesting example of a random simplicial complex; the results of this section will be used in the proof of Theorem \ref{thmbetti2}. 

We fix a positive integer $k> 0$ and consider an upper random simplicial complex with probability parameters 
$$p_\sigma= 
\left\{
\begin{array}{ll}
p, & \mbox{for} \dim \sigma =k, \\ \\

0, &\mbox{otherwise}
\end{array}
\right.
$$
Here $p\in (0, 1)$ is a positive parameter, which typically depends on $n$. A random complex in this model is built by randomly selecting $k$-dimensional simplexes $\sigma\in \Delta_n$, chosen independently of each other, with probability $p$, and adding all faces of the selected simplexes. 

We ask under which conditions on the probability parameter $p$ the random pure $k$-dimensional complex contains the full $l$-dimensional skeleton 
$\Delta_n^{(\ell)}$, where $0\le \ell<k$? 

\begin{lemma}\label{lm801} (1) Suppose that 
\begin{eqnarray}\label{plus1}
p= \frac{(\ell+1)\log n +\omega}{\binom {n-\ell} {k-\ell}}
\end{eqnarray}
for a sequence $\omega\to \infty$. 
Then a random pure $k$-dimensional simplicial complex $Y$ contains the $\ell$-dimensional skeleton $\Delta_n^{(\ell)}$, a.a.s.  More precisely, under assumption (\ref{plus1}), a random complex $Y$ contains the $\ell$-skeleton $\Delta_n^{(\ell)}$ with probability at least 
$1-e^{-\omega}$. 

(2) If however 
\begin{eqnarray}\label{minus1}
p= \frac{(\ell+1)\log n -\omega}{\binom {n-\ell} {k-\ell}},
\end{eqnarray}
then $Y$ does not contain the $\ell$-skeleton $\Delta_n^{(\ell)}$, a.a.s. 
\end{lemma}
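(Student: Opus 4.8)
The plan is to reduce the statement to a standard first/second moment (Poisson approximation) argument on the collection of $\ell$-faces of $\Delta_n$. For a fixed $\ell$-simplex $\rho\in\Delta_n^{(\ell)}$, note that $\rho\in Y$ if and only if at least one of the $k$-simplexes $\sigma\supseteq\rho$ was selected; there are exactly $\binom{n-\ell}{k-\ell}$ such $\sigma$, and they are selected independently, so
\begin{equation*}
\overline{\Bbb P}(\rho\notin Y) \;=\; (1-p)^{\binom{n-\ell}{k-\ell}},
\qquad
\overline{\Bbb P}(\rho\in Y) \;=\; 1-(1-p)^{\binom{n-\ell}{k-\ell}}.
\end{equation*}
This is exactly Example \ref{ex3} specialised to the present parameters, since $q_\tau=1$ unless $\dim\tau=k$. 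Writing $N=\binom{n+1}{\ell+1}$ for the number of $\ell$-faces and $X$ for the number of $\ell$-faces \emph{missing} from $Y$, the event ``$\Delta_n^{(\ell)}\subseteq Y$'' is precisely $\{X=0\}$, and $\E[X]=N\cdot(1-p)^{\binom{n-\ell}{k-\ell}}$.

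For part (1): substitute (\ref{plus1}) and use $(1-p)^{m}\le e^{-pm}$ to get $\E[X]\le N\, e^{-(\ell+1)\log n-\omega} = N n^{-(\ell+1)} e^{-\omega}$. Since $N=\binom{n+1}{\ell+1}\le n^{\ell+1}$ (more precisely $\le \tfrac{(n+1)^{\ell+1}}{(\ell+1)!}$, which is $\le n^{\ell+1}$ for $n$ large, and in any case $N n^{-(\ell+1)}\to 1$), we obtain $\E[X]\le e^{-\omega}(1+o(1))$, and then Markov's inequality gives $\overline{\Bbb P}(\Delta_n^{(\ell)}\not\subseteq Y)=\overline{\Bbb P}(X\ge 1)\le \E[X]\le e^{-\omega}(1+o(1))$, which tends to $0$. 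To get the clean bound ``at least $1-e^{-\omega}$'' one should be slightly careful with the constant: use the sharper count $N\le n^{\ell+1}/(\ell+1)!\le n^{\ell+1}$ directly, so that $\E[X]\le e^{-\omega}$ exactly, giving $\overline{\Bbb P}(X=0)\ge 1-e^{-\omega}$.

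For part (2): substitute (\ref{minus1}); now $\E[X]=N(1-p)^{\binom{n-\ell}{k-\ell}}$. Using $N=\binom{n+1}{\ell+1}\sim n^{\ell+1}/(\ell+1)!$ and $(1-p)^{m}=e^{m\log(1-p)}=e^{-pm(1+o(1))}$ (valid since $p=o(1)$), we get $pm=(\ell+1)\log n-\omega$, hence $\E[X]\to\infty$ (it grows like $n^{\ell+1}e^{-(\ell+1)\log n+\omega}/(\ell+1)!=e^{\omega}/(\ell+1)!\to\infty$). A first-moment bound alone does not suffice here, so I would apply the second moment method: bound $\mathrm{Var}(X)$ by controlling $\E[X(X-1)]=\sum_{\rho\neq\rho'}\overline{\Bbb P}(\rho,\rho'\notin Y)$. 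The key point is that for two distinct $\ell$-faces $\rho,\rho'$, the sets of $k$-simplexes containing them overlap in at most $\binom{n-\ell-1}{k-\ell-1}=o\!\big(\binom{n-\ell}{k-\ell}\big)$ simplexes, so $\overline{\Bbb P}(\rho,\rho'\notin Y)\le (1-p)^{2\binom{n-\ell}{k-\ell}-\binom{n-\ell-1}{k-\ell-1}}$, giving $\E[X(X-1)]\le \E[X]^2\cdot(1-p)^{-\binom{n-\ell-1}{k-\ell-1}}=\E[X]^2(1+o(1))$ because $p\binom{n-\ell-1}{k-\ell-1}\to 0$. Then $\mathrm{Var}(X)=o(\E[X]^2)+\E[X]$, and Chebyshev gives $\overline{\Bbb P}(X=0)\le \mathrm{Var}(X)/\E[X]^2\to 0$, i.e. $Y$ fails to contain $\Delta_n^{(\ell)}$ a.a.s.

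The main obstacle is the second-moment computation in part (2): one must verify carefully that the pairwise correlation term is negligible, which hinges on the two facts $p\,\binom{n-\ell-1}{k-\ell-1}\to 0$ (so the ``shared'' simplexes contribute a $1+o(1)$ factor) and $\E[X]\to\infty$. Everything else is a routine substitution plus Markov/Chebyshev; the first moment in part (1) and the reduction via Example \ref{ex3} are immediate.
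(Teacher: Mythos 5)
Your proposal is correct and follows the same overall strategy as the paper: a first moment bound for part (1) and a second moment argument for part (2), applied to the random variable $X$ counting the $\ell$-faces missing from $Y$, with the probability $\overline{\Bbb P}(\rho\notin Y)=(1-p)^{\binom{n-\ell}{k-\ell}}$ coming from Example \ref{ex3}. The only substantive difference lies in how the second moment is controlled. The paper uses the inequality $\Bbb P(X>0)\ge (\E X)^2/\E(X^2)$ and stratifies the sum $\E(X^2)=\sum_{(\sigma,\tau)}\E(X_\sigma X_\tau)$ by the intersection dimension $d=\dim(\sigma\cap\tau)$, checking separately that the diagonal term is $\E(X)^{-1}\to 0$, the disjoint term tends to $1$, and each intermediate term tends to $0$. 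You instead bound all off-diagonal pairs uniformly by the worst case, namely that two distinct $\ell$-faces share at most $\binom{n-\ell-1}{k-\ell-1}$ common $k$-cofaces, and observe that $p\binom{n-\ell-1}{k-\ell-1}\to 0$ makes the correlation factor $1+o(1)$; combined with $\E(X)\to\infty$ and Chebyshev this is shorter and avoids the case analysis. Both routes rest on the same two facts ($\E(X)\to\infty$ and negligible pairwise correlation), and both implicitly need $p\log n\to 0$, which holds because $\ell<k$; the paper makes this explicit and you should too when justifying $(1-p)^m=e^{-pm(1+o(1))}$. Your caveat about the constant in the bound $1-e^{-\omega}$ of part (1) is well taken (for $\ell=0$ the inequality $\binom{n+1}{1}\le n$ fails), but the paper's own proof has the same harmless slack, absorbed into $\omega$.
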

\begin{proof} For $\sigma\in \Delta_n$,  $\dim\sigma=\ell$, let $X_\sigma$ be a random variable which equals 1 if 
$\sigma\not\in Y$ and $0$ if $\sigma\in Y$. 
Then $X=\sum X_\sigma$ is the random variable counting the number of $\ell$-simplexes not in $Y$. 
We have $\E(X_\sigma) =q^{\binom {n-\ell} {k-\ell}}$ (where, as usual, $q=1-p$) and $$\E(X)=\binom {n+1}{\ell+1} \cdot q^{\binom {n-\ell} {k-\ell}}.$$
We show that the assumption (\ref{plus1}) implies that 
$\E(X)\to 0$. 
Indeed, 
$$\E(X) \le \binom{n+1}{\ell +1} e^{-p{\binom {n-\ell} {k-\ell}} } \le \exp\left((\ell+1)\log n - p \binom {n-\ell}{k-\ell}\right)
 = e^{-\omega}\to 0.$$
The statement (1) now follows from the Markov inequality.

To prove (2) we want to apply the inequality 
$${\Bbb P}(X>0) \ge \frac{({\mathbf E}X)^2}{{\mathbf E}(X^2)},$$
see (3.3) on page 54 of \cite{JLR}. %Janson, Luczak, Rucinski. 
We shall assume that $p$ satisfies (\ref{minus1}) and will show that $\frac{\Bbb E(X)^2}{\Bbb E(X^2)}\to 1$. We have $$X^2=\sum_{(\sigma, \tau)} X_\sigma X_\tau,$$ 
where $(\sigma, \tau)$ runs over all pairs of $\ell$-dimensional simplexes of $\Delta_n$, and 
\begin{eqnarray}\label{eq}
{\Bbb E}(X_\sigma X_\tau)= 
\left\{
\begin{array}{ll}
q^{2\binom {n-\ell}{k-\ell} -\binom {n-x}{k-x}},&\mbox{if} \quad x\le k,\\ \\
q^{2\binom {n-\ell}{k-\ell}}, &\mbox{if} \quad x>k.
\end{array}
\right.
\end{eqnarray}
where
$x=\dim (\sigma\cup\tau)$.
%$$ x= \dim\sigma +\dim \tau -\dim (\sigma \cap \tau)=2\ell - \dim(\sigma\cap\tau).$$
Both cases in this formula can be written as in the upper row since 
$\binom r s =0$ for $s<0$. 
To explain formula (\ref{eq}) we note that ${\Bbb E}(X_\sigma X_\tau)$ equals probability that neither of the simplexes $\sigma, \tau$ are included in $Y$. There are $\binom {n-\ell}{k-\ell}$  simplexes of dimension $k$ containing $\sigma$ and the same number of 
$k$-simplexes contain $\tau$. However in this count we include the $k$-simplexes containing both $\sigma$ and $\tau$ twice, and this fact is reflected in the term $\binom {n-x}{k-x}$. 

Denoting $$d=\dim(\sigma\cap \tau) = 2\ell -x$$ we obtain
\begin{eqnarray*}
{\Bbb E}(X^2) &=& \sum_{d=-1}^{\ell} \binom {n+1}{\ell+1} \cdot  \binom{\ell+1}{d+1}\cdot  \binom{n-\ell}{\ell -d} \cdot q^{2{\binom {n-\ell}{k-\ell}}-\binom{n-x}{k-x}}
%\\ &=& \binom {n+1}{\ell+1} q^{2{\binom {n-\ell}{k-\ell}}} \cdot \sum_{x=\ell}^{2\ell+1}\binom{\ell+1}{d+1}\cdot  \binom{n-\ell}{\ell -d} q^{-\binom{n-x}{k-x}}
 \end{eqnarray*}
 %(where the binomial coefficient $\binom{n-x}{k-x}$ vanishes for $x>k$)
 and hence
 \begin{eqnarray}\label{fraction}
 \frac{\Bbb E(X^2)}{\Bbb E(X)^2} = \sum_{d=-1}^{\ell} \frac{\binom{\ell+1}{d+1}\binom {n-\ell}{\ell - d}}{\binom{n+1}{\ell+1}}\cdot
 q^{-\binom{n-x}{k-x}}.
 \end{eqnarray}
 Here $x=2\ell -d$. 
 
The term of the sum (\ref{fraction}) with  $d=\ell$ and $x=\ell$ is 
$$
{\binom {n+1}{\ell+1}}^{-1} q^{-\binom{n-\ell}{k-\ell}}= \Bbb E(X)^{-1}.
$$
We show below that assumption (\ref{minus1}) implies that $\Bbb E(X)\to \infty$ and hence this term tends to $0$. 
There exists $C>0$ and $N>0$ such that for any $n>N$ one has $\binom {n+1}{\ell +1} >C n^{\ell+1}$ 
and $\log(1-p) >-p(1+p)$ for sufficiently small $p>0$. 
Hence 
\begin{eqnarray*}\log \Bbb E(X) &>& (\ell+1)\log n + \binom {n-\ell}{k-\ell} \log(1-p)+ C'\\
&>& (\ell +1) \log n - \binom {n-\ell}{k-\ell} p(1+p)+C' \\ &=& \omega(1+p)- (\ell+1) \cdot p\cdot \log n+C'
\end{eqnarray*}
It is easy to see that our assumption (\ref{minus1}) and also $\ell <k$ imply that $p\log n \to 0$. Hence, we see that the summand of (\ref{fraction}) with $d=\ell$
tends to zero.  

Consider now the term  of (\ref{fraction}) with $d=-1$ and $x=2\ell+1$; it equals 
$$\frac{\binom{n-\ell}{\ell+1}}{\binom {n+1}{\ell+1}} q^{-\binom{n-2\ell-1}{k-2\ell-1}}.$$
We show below that this term tends to $1$ as $n\to\infty$. 
For $k< 2\ell+1$ our claim is obvious since the coefficient $\frac{\binom{n-\ell}{\ell+1}}{\binom {n+1}{\ell+1}}$ tends to $1$. 
In the sequel we shall assume that $k\ge 2\ell+1$. We observe that (\ref{minus1}) implies that 
$$p\binom{n-2\ell - 1}{k-2\ell-1} \sim p n^{k-2\ell -1}\to 0$$
and therefore (using Remark \ref{rk1014}) we obtain
$$q^{\binom{n-2\ell-1}{k-2\ell -1}}= 1 - p\binom{n-2\ell-1}{k-2\ell -1}+O\left(p^2\binom{n-2\ell-1}{k-2\ell -1}^2\right)$$
which converges to $1$.

It remains to show that any summand of (\ref{fraction}) with $-1<d<\ell$ tends to zero. If the symbol $S_d$ represents this summand, then 
$$S_d^{-1} = \frac{\binom{n+1}{\ell+1}}{\binom{\ell+1}{d+1}\binom{n-\ell}{\ell-d}}\cdot q^{\binom{n-x}{k-x}}$$
and we show that $S_d^{-1}\to \infty$. Using the inequalities $\binom {n+1}{\ell+1} > Cn^{\ell+1}$ and $\binom{n-\ell}{\ell-d}< n^{\ell-d}$ we obtain
\begin{eqnarray*}
\log(S_d^{-1}) &>& (\ell+1) \log n - (\ell-d)\log n + \binom{n-x}{k-x}\log(1-p) +C'\\
&>& (d+1)\log n - 2\binom{n-x}{k-x} p +C'
\end{eqnarray*}
Above we have used the inequality $\log(1-p) >-2p$ which is valid for sufficiently small $p>0$. Since $d\ge 0$ we have $(d+1)\log n\to \infty$. On the other hand, since $x>\ell$ we have $\binom{n-x}{k-x} p\sim pn^{k-x}\to 0$. This completes the proof. \end{proof}

\begin{remark}\label{rk82}
{\rm 
Equation (\ref{plus1}) can equivalently be written as 
\begin{eqnarray}\label{plus111}
p=\frac{(\ell+1)\cdot (k-\ell)!\cdot \log n +\omega}{n^{k-\ell}};
\end{eqnarray}
similarly, equation (\ref{minus1}) can be written as 
\begin{eqnarray}\label{minus111}
p=\frac{(\ell+1)\cdot (k-\ell)!\cdot \log n -\omega}{n^{k-\ell}},
\end{eqnarray}
where $\omega\to \infty$. 
}
\end{remark}
%\newpage
%
%
%$$\left\{
%\begin{array}{ll}
%q^{2\binom{n-1} 2 -(n-1)}, & i\not=j,\\ \\
%q^{\binom n 2}, & i=j.
%\end{array}
%\right.
%$$
%Hence 
%$${\Bbb E}(X^2) = n(n+1) q^{2\binom {n-1} 2 - n+1}+(n+1)q^{\binom n 2}.$$
%
%We have 
%$$\frac{({\mathbf E}X)^2}{{\mathbf E}(X^2)}= \frac{(n+1) q^{\binom n 2}}{nq^{(n-1)(n/2 - 3)}+1}=
%\left( \frac{n}{n+1} q^{-3(n-1)}+\frac{1}{(n+1)q^{\binom n2}}\right)^{-1}.$$
%We see that the RHS tends to $1$ as $n\to \infty$ implying that $X>0$, a.a.s. In other words, the set of vertices of $Y$ is a proper subset of $[n]$. 
%
%
%\vskip 2cm
%To prove the statement (2) let us first show that (\ref{minus1}) implies $nq^{\binom n 2}\to \infty$. 
%Indeed, $\log q\ge -p -p ^2$ and we have 
%\begin{eqnarray*}
%&&\log \left(nq^{\binom n 2}\right) \ge \log n - (1+p)p\binom n 2 = \\ 
%&& \log n - (1+p)(\log n - \omega) = \\
%&& \omega -p\log n +p\omega \ge \\
%&& \omega - \frac{(\log n)^2}{\binom n 2} \to \infty
%\end{eqnarray*}
%and hence $nq^{\binom n 2}\to \infty$.
%
%Next we observe that $\limsup q^n =1$. To prove this we assume that $\limsup q^n =\alpha <1$. Then we would have 
%$$nq^{\binom n2} = \left[n^{\frac{2}{n-1}}q^n\right]^{\frac{n-1}{2}}\le \left[\frac{1+\alpha}{2}\right]^{\frac{n-1}{2}}\to 0$$
%contradicting $nq^{\binom n2}\to \infty$ proven earlier. 
%
%This also shows that the sequence $q^n$ converges to $1$, since otherwise we would have a subsequence $n_k$ for which 
%$q^{n_k}$ converges to $\alpha<1$ contradicting the above argument.  
%
%\end{proof}

{\bf A special class of random pure complexes.}

In the rest of this section we consider a special class of random pure complexes; we shall give a full description of their Betti numbers. 

For $k> 0$, consider an upper random $k$-dimensional  pure complex $Y$
with the probability parameter of the form 
\begin{eqnarray}\label{011}
p=n^{-\alpha},\end{eqnarray} where 
\begin{eqnarray}\label{01}
\alpha\in (0,1).\end{eqnarray}

\begin{prop} \label{cor901} Let $Y$ be a random $k$-dimensional pure complex with respect to the upper measure with probability parameter $p$ of the form (\ref{011}) and with exponent $\alpha$ satisfying 
(\ref{01}). Then: (a) $Y$ contains the full $(k-1)$-dimensional skeleton, $\Delta_n^{(k-1)}$, a.a.s.; (b) 
the $k$-dimensional face number $f_k(Y)$ satisfies
\begin{eqnarray}\label{between1}
(1-t_n) \cdot \frac{n^{k+1-\alpha}}{(k+1)!}  \, \le\,  f_k(Y) \, \le\,  (1+t_n) \cdot \frac{n^{k+1-\alpha}}{(k+1)!},
\end{eqnarray}
a.a.s., where $t_n\in [0,\infty)$ is a sequence tending to zero, $t_n\to 0$. 
\end{prop}

\begin{proof}
(a) follows from Lemma \ref{lm801} and Remark \ref{rk82}.

To prove (b) we note that the number $f_k(Y)$ of $k$-dimensional faces of $Y$ is a binomial random variable, 
${\rm Bi}(\binom {n+1}{k+1}, n^{-\alpha})$, and hence 
\begin{eqnarray}\label{203}
\E(f_k) =\binom {n+1}{k+1}\cdot n^{-\alpha} \, =\,  \frac{n^{k+1-\alpha}}{(k+1)!}\cdot (1+o(1)).
\end{eqnarray}

We may use the Chernoff bound, see Theorem 2.1 of \cite{JLR}, which states that for any $\tau\ge 0$, 
$$\Bbb P(f_k\ge \E(f_k) +\tau) \le \exp\left(-\frac{\tau^2}{2(\E(f_k) +\tau/3)}\right), $$ and 
$$\Bbb P(f_k\le \E(f_k) -\tau) \le \exp\left(-\frac{\tau^2}{2\E(f_k)}\right). $$
We shall apply these bounds with $\tau = \Bbb E(f_k)^{2/3} =t_n\Bbb E(f_k)$ 
where $t_n = \Bbb E(f_k)^{-1/3} =o(1)$. 
We obtain
$$\Bbb P(f_k \geq (1+t_n)\cdot\Bbb E(f_k))\leq \exp\left(-\frac{\tau^2}{2(\Bbb E(f_k) +\tau/3)} \right)\leq \exp\left(-\frac{\Bbb E(f_k)^{1/3}}{4}\right)$$
and
$$\Bbb P(f_k \leq (1-t_n)\cdot\Bbb E(f_k))\leq \exp\left(-\frac{\Bbb E(f_k)^{1/3}}{2}\right).$$
Since $\Bbb E(f_k)^{1/3} \to \infty$ we see
$
(1-t_n) \cdot \Bbb E(f_k) \, \le\,  f_k \, \le\,  (1+t_n) \cdot \Bbb E(f_k),
$
a.a.s. 
\end{proof}

\begin{corollary}\label{cor92} Let $Y$ be a random $k$-dimensional pure complex with respect to the upper measure with probability parameter $p$ of the form (\ref{011}) and with exponent $\alpha$ satisfying 
(\ref{01}). The $k$-dimensional Betti number $b_k(Y)$ satisfies:
\begin{eqnarray}\label{bky1}
(1- t'_n)\cdot \frac{n^{k+1-\alpha}}{(k+1)!} \le b_k(Y) \le (1+ t'_n)\cdot \frac{n^{k+1-\alpha}}{(k+1)!},
\end{eqnarray}
a.a.s., for a sequence $t'_n\to 0$.
\end{corollary}
\begin{proof}%[Proof of Corollary \ref{cor92}]
We apply the Morse inequalities 
$$f_k(Y) -f_{k-1}(Y) \le b_k(Y)\le f_k(Y).$$  
By Proposition \ref{cor901}, $f_k(Y) \sim C\cdot n^{k+1-\alpha},$ and $f_{k-1}(Y) \sim C'\cdot n^k$, a.a.s. This obviously implies (\ref{bky1}) since $\alpha<1$. 
\end{proof}

Next we consider the Betti numbers of $Y$ below dimension $k$:
\begin{corollary} \label{lm93}
Let $Y$ be a random $k$-dimensional pure complex with respect to the upper measure with probability parameter $p$ of the form (\ref{011}) and with exponent $\alpha$ satisfying 
(\ref{01}). The reduced Betti numbers of $Y$ below dimension $k$ vanish, i.e. $\tilde b_\ell(Y)=0$ for all $\ell<k$, a.a.s.
\end{corollary}
\begin{proof} 
We know that $Y$ contains the full $(k-1)$-dimensional skeleton $\Delta_n^{(k-1)}$ a.a.s. 
The latter space is acyclic in all dimensions $<k-1$. 
To prove Corollary \ref{lm93} we only need to show that the Betti number $b_{k-1}(Y)=0$ vanishes, a.a.s.
Now, we can view $Y$ as a Linial - Meshulam random simplicial complex 
with probability parameter $p=n^{-\alpha}$ where $\alpha<1$. It is well known (see \cite{walmesh})
that in this situation the rational homology in dimension $k -1$ vanishes, i.e. $b_{k-1}(Y)=0$, a.a.s. 
As a clarification we note that, while \cite{walmesh} operates with cohomology groups with finite coefficients, vanishing of cohomology groups with coefficients in $\Z_2$ implies vanishing of cohomology groups with rational 
coefficients, i.e. vanishing of the Betti numbers. 
\end{proof}

%Thus, we see that the random simplicial complex $Y$ which we discuss in this section has a unique nonzero reduced Betti number $\tilde b_k(Y)$ whose magnitude is characterised by the inequalities (\ref{bky1}).
%

\section{The Alexander Duality}\label{ad}
In this section we continue the study of \S \ref{du} by showing that random simplicial complexes in lower and upper model are dual to each in the sense of Spanier - Whitehead duality. 
This implies that homology and cohomology of the lower and upper complexes satisfy the Alexander duality relation. 

We start this section by introducing basic notions of duality; this material is well known and is included for convenience of the reader. 

Recall that $[n]$ denotes the set $\{0, 1, \dots, n\}$ and $\Delta_n$ denotes the $n$-dimensional simplex spanned by $[n]$. The symbol $\partial \Delta_n$ denotes the boundary of the simplex, i.e. the union of all proper faces $\sigma\subseteq \Delta_n$. 
Topologically $\partial \Delta_n$ is a sphere of dimension $n-1$. 

\subsection{The dual simplicial complex}\label{xprime}
In this section we describe a combinatorial duality construction for simplicial complexes.  More precisely, for a simplicial subcomplex 
$X\subseteq \pa \Delta_n$ we construct a simplicial complex $X'\subseteq \Delta_n$ which is homotopy equivalent to the complement $\partial \Delta_n-X$ of $X$ in the sphere $\partial\Delta_n$. 

%It is combinatorially different from the combinatorial Alexander duality of the previous section but it is homotopy equivalent to it. 

Let $X\subseteq\pa\Delta_n$ be a simplicial subcomplex. %, $X\not=\pa\Delta_n$. 
Define {\it the dual complex} $X'$ as an abstract simplicial complex with the vertex set $E(X)$ (the set of all external faces of $X$) and
a set of external faces $\sigma_1,\dots,\sigma_k\in E(X)$ of $X$ forms a $(k-1)$-simplex of $X'$ if the union of their vertex sets is a proper 
subset of $[n]$, i.e. $$\cup_{i=1}^k V(\sigma_i) \neq [n].$$

\begin{prop}\label{prop:X'homotopy}
The geometric realisation of the simplicial complex $X'$ is homotopy equivalent to the complement $\partial\Delta_n - X$. \end{prop}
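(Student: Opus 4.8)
The plan is to identify $\partial\Delta_n - X$, up to homotopy, with the order complex of the poset of non-faces of $X$, and then to match that order complex with $X'$ by a pair of comparable poset maps. Throughout, $P$ denotes the poset $\{\sigma\in\partial\Delta_n : \sigma\notin X\}$ ordered by inclusion, and $\Delta(P)$ its order complex.

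\emph{Step 1: realising the complement combinatorially.} I would first pass to the barycentric subdivision $\mathrm{sd}(\partial\Delta_n)$. Since every subcomplex of a simplicial complex becomes a full subcomplex after one barycentric subdivision, $\mathrm{sd}(X)$ is full in $\mathrm{sd}(\partial\Delta_n)$. Now I invoke the standard fact of combinatorial topology that if $L_0$ is a full subcomplex of a finite complex $K$, then $|K|-|L_0|$ deformation retracts onto the full subcomplex of $K$ spanned by the vertices not lying in $L_0$. Applied to $L_0=\mathrm{sd}(X)\subseteq K=\mathrm{sd}(\partial\Delta_n)$, this gives that $\partial\Delta_n-X$ deformation retracts onto the full subcomplex $D$ of $\mathrm{sd}(\partial\Delta_n)$ on the barycenters $\hat\sigma$ with $\sigma\in\partial\Delta_n$ and $\sigma\notin X$. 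A simplex of $\mathrm{sd}(\partial\Delta_n)$ is a chain $\sigma_0\subsetneq\cdots\subsetneq\sigma_k$ of faces of $\partial\Delta_n$, and it lies in $D$ precisely when no $\sigma_i$ belongs to $X$; hence $D=\Delta(P)$. (Alternatively one reaches the same conclusion by applying the nerve lemma to the cover of $\partial\Delta_n-X$ by the open stars in $\mathrm{sd}(\partial\Delta_n)$ of the vertices $\hat\sigma$, $\sigma\notin X$, whose nonempty intersections are contractible.)

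\emph{Step 2: comparing $\Delta(P)$ with $X'$.} Let $Q$ be the face poset of $X'$, so that $\Delta(Q)=\mathrm{sd}(X')$ is homeomorphic to the geometric realisation $|X'|$. I would define two inclusion-preserving maps between $P$ and $Q$. For $\sigma\in P$ put $g(\sigma)=\{\rho\in E(X):\rho\subseteq\sigma\}$, the set of external faces of $X$ contained in $\sigma$; it is nonempty because a minimal face of $\sigma$ not in $X$ is external, and the union of the vertex sets of its members lies in $V(\sigma)\subsetneq[n]$, so $g(\sigma)$ is a simplex of $X'$, i.e.\ $g(\sigma)\in Q$. For a simplex $S\in Q$ put $h(S)=\bigcup_{\rho\in S}V(\rho)$; this is a proper subset of $[n]$ by the very definition of the simplices of $X'$, and it is not a face of $X$ since it contains some $\rho\in E(X)$, so $h(S)\in P$. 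Both $g$ and $h$ are order preserving, and a direct check gives $h(g(\sigma))\subseteq\sigma$ and $g(h(S))\supseteq S$, i.e.\ $h\circ g\le\mathrm{id}_P$ and $g\circ h\ge\mathrm{id}_Q$. By the Quillen-type lemma that two order-preserving maps of posets which are comparable pointwise induce homotopic maps on order complexes, $|g|$ and $|h|$ are mutually inverse homotopy equivalences between $\Delta(P)$ and $\Delta(Q)\cong|X'|$. Combined with Step 1 this yields $\partial\Delta_n - X\simeq|X'|$.

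\emph{Main obstacle.} The only point requiring genuine care is Step 1: the ``complement retracts onto the complementary full subcomplex'' statement must be applied to a \emph{full} subcomplex, which is exactly why the argument is run inside $\mathrm{sd}(\partial\Delta_n)$ rather than in $\partial\Delta_n$ itself, and one should record that $|\mathrm{sd}(\partial\Delta_n)|=|\partial\Delta_n|$, $|\mathrm{sd}(X)|=|X|$. Step 2 is then purely a matter of writing down the two poset maps $g,h$ and verifying the four elementary properties listed above; the degenerate cases $X=\varnothing$ and $X=\partial\Delta_n$ are handled automatically since then $P$ and $X'$ are empty or all of $\partial\Delta_n$, respectively, consistently with the two sides of the claimed equivalence.
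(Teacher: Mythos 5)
Your argument is correct, but it reaches the conclusion by a genuinely different route than the paper. The paper covers $\partial\Delta_n-X$ by the open stars $\st(\sigma)$, $\sigma\in E(X)$, checks that all nonempty intersections of stars are contractible and that an intersection is empty exactly when the relevant vertex sets exhaust $[n]$, observes that the nerve of this cover is precisely $X'$, and concludes by the Nerve Theorem. You instead first replace the complement by an explicit simplicial model, the order complex $\Delta(P)$ of the poset of non-faces (via barycentric subdivision and the retraction of the complement of a full subcomplex onto the complementary full subcomplex), and then compare $\Delta(P)$ with the barycentric subdivision of $X'$ by the pair of order-preserving maps $g,h$ satisfying $h\circ g\le\mathrm{id}_P$ and $g\circ h\ge\mathrm{id}_Q$, invoking the order homotopy lemma. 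The two proofs share one essential combinatorial input --- that every non-face of $X$ contains a minimal non-face, which is external; this is what makes the stars cover the complement in the paper's proof and what makes your $g(\sigma)$ nonempty. What the paper's route buys is brevity and the immediate appearance of $X'$ as a nerve; what yours buys is an entirely simplicial argument with explicit simplicial maps realising the homotopy equivalence (rather than the abstract equivalence furnished by the Nerve Theorem), together with the intermediate model $\Delta(P)$ of the complement, which is of independent use. Both are complete; your only delicate point, the fullness of $\mathrm{sd}(X)$ in $\mathrm{sd}(\partial\Delta_n)$, is correctly flagged and handled.
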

\begin{proof}
For any $\sigma\in E(X)$ let $\st(\sigma)$ denote  $\st(\sigma) =\st_{\partial\Delta_n}(\sigma)$ -- the star of $\sigma$ viewed as a subcomplex of $\partial \Delta_n$.
Recall that $\st(\sigma) =\st_{\partial\Delta_n}(\sigma)$ is defined as the union of all open simplexes $\tau\subseteq \partial \Delta_n$ whose closure contains 
$\sigma$. 

The family of stars
$\mathcal{U} = \{\St(\sigma)\}_{\sigma\in E(X)}$
forms a contractible open cover of the complement $\partial\Delta_n - X$. 
Indeed,  for $\sigma\in \ext(X)$ we obviously have $\st(\sigma) \cap X =\varnothing$ which gives the inclusion 
$$\bigcup_{\sigma\in \ext(X)}\st(\sigma)\subseteq \partial \Delta_n-X.$$
This is in fact an equality, i.e. for any open simplex $\tau\subseteq \partial\Delta_n$ with $\tau\not\subseteq X$ there is 
a face $\sigma\in \ext(X)$ such that $\tau\subseteq \st(\sigma)$. % (i.e. $\tau \supseteq \sigma$). 
Indeed, given $\tau\not\in X$ let $\sigma\su \tau$ be a minimal face of $\tau$ not in $X$. Then $\sigma\in E(X)$ and $\tau\su \st(\sigma)$. 

%for a simplex $\tau\subseteq \partial\Delta_n$ with $\tau\not\subseteq X$, consider the intersection $\overline\tau\cap X$. If one of the vertices $v$ of $\tau$ is not a vertex of $X$ then 
%$\tau\subseteq \st(v)$ and $v\in \ext(X)$. Assuming that the whole 0-skeleton $\tau^{(0)}\subseteq X$ lies in $X$,
%consider the maximal $k\ge 0$ such that the intersection $\overline\tau\cap X$ contains the full $k$-skeleton $\tau^{(k)}$. 
%Clearly $k<\dim \tau$ and any 
%face $\tau'$ of $\tau$ of dimension $\dim \tau'=k+1$ which is not in $X$ is an external face of $X$. 
%Hence we see that $\tau\subseteq \st(\tau')$ and $\tau'\in \ext(X)$. 

Note that the cover $\mathcal U$ has the property that each intersection
$$\St(\sigma_1)\cap\dots\cap \St(\sigma_k) = \St(\sigma)$$
is a star of a simplex $\sigma$, 
where $\sigma$ has the vertex set $V(\sigma) = \cup_{i=1}^k V(\sigma_k)$. Thus, every such intersection is either contractible or empty, and it is empty precisely when $\cup_{i=1}^k V(\sigma_k) = [n]$. 
The result now follows by noting that the nerve of $\mathcal{U}$ is exactly the simplcial complex $X'$ and then applying the Nerve Theorem, see \cite{hatcher}, Corollary 4G.3.
\end{proof}

\begin{example}{\rm 
For $n=2$ let $X\subseteq \pa\Delta_2$ be the vertex set, i.e. $X=\{0, 1, 2\}$. It is a 0-dimensional subcomplex 
whose complement $\pa\Delta_2-X$ is a circle with 3 punctures; it has 3 connected components, each is contaractible.
The set of external simplexes 
$E(X)$ consists of all edges, $$E(X)=\{(ij); i<j, \quad i, j\in[2]\}, \quad |E(X)|=3.$$ The dual complex $X'$ has no edges, i.e. $X'$ is a 3 point set. 

%
%$6$ vertices which can be labelled by the pairs $(ij)$ with $i<j$, \, $i, j\in [3]$. Two vertices $(ij)$ and $(i'j')$ of $X'$ are connected by an edge in $X'$ if and only if the pairs 
%$(ij)$ and $(i'j')$ are not complementary in $[3]$. The dual complex $X'$ contains also four 2-simplexes, one of them is 
%$\{(01), (02), (12)\}$. 
%
%We see that the dual complex $X'$ is a graph with 6 vertices and 12 edges, its Euler characteristic is -6. 

}
\end{example}

Applying the Alexander duality theorem combined with Proposition \ref{prop:X'homotopy} we obtain:

\begin{prop} For any proper simplicial subcomplex $X\subseteq \pa\Delta_n$ and for any abelian group $G$ one has $$H^j(X';G) \simeq 
H_{n-2-j}(X;G), \quad \mbox{where}\quad  j=0, 1, \dots, n-2.$$
\end{prop}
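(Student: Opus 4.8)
The plan is to combine two classical facts. First, the Alexander duality theorem for subcomplexes of a sphere: if $X\subseteq \pa\Delta_n$ is a proper subcomplex, then its complement $\pa\Delta_n - X$ has reduced cohomology related to the reduced homology of $X$ by
$$\tilde H^{j}(\pa\Delta_n - X; G) \;\simeq\; \tilde H_{n-2-j}(X; G),$$
since $\pa\Delta_n$ is a sphere of dimension $n-1$ and the Alexander duality isomorphism on an $m$-sphere reads $\tilde H^{j}(S^m - X) \simeq \tilde H_{m-1-j}(X)$ with $m = n-1$. Second, Proposition \ref{prop:X'homotopy}, which identifies the complement $\pa\Delta_n - X$, up to homotopy equivalence, with the geometric realisation of the dual complex $X'$; homotopy equivalent spaces have isomorphic cohomology groups, so $\tilde H^{j}(X'; G) \simeq \tilde H^{j}(\pa\Delta_n - X; G)$.

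The steps, in order, are: (i) invoke Proposition \ref{prop:X'homotopy} to replace $\tilde H^j(X';G)$ by $\tilde H^j(\pa\Delta_n - X; G)$; (ii) apply Alexander duality in the sphere $\pa\Delta_n \cong S^{n-1}$ to obtain $\tilde H^j(\pa\Delta_n - X;G) \simeq \tilde H_{n-2-j}(X;G)$; (iii) combine the two isomorphisms and record that the statement holds for $j = 0, 1, \dots, n-2$, which is exactly the range in which the dimension index $n-2-j$ on the homology side is a non-negative integer (outside this range both sides vanish in reduced (co)homology, so the unreduced statement as written also holds once one checks the degree-$0$ bookkeeping). One should be slightly careful about reduced versus unreduced groups: the cleanest route is to phrase everything with reduced (co)homology, where Alexander duality is symmetric, and then note that for $0 < j < n-2$ reduced and unreduced agree, while the endpoint cases $j=0$ and $j = n-2$ require only the standard shift by one copy of $G$ on both sides, which matches up.

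I do not expect a genuine obstacle here: the proposition is essentially a one-line corollary, as the text already signals (``Applying the Alexander duality theorem combined with Proposition \ref{prop:X'homotopy}''). The only point demanding a little care is the indexing — making sure the sphere dimension $n-1$ is fed correctly into the duality formula so that the homological degree comes out as $n-2-j$ rather than $n-1-j$, and confirming that the claimed range $j = 0, \dots, n-2$ is the complete one. Everything else — functoriality of the homotopy equivalence on cohomology, coefficients in an arbitrary abelian group $G$ (Alexander duality holds with any coefficient group since $\pa\Delta_n$ is a sphere) — is routine.
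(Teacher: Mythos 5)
Your argument is exactly the paper's: the proposition is presented there as an immediate consequence of Proposition \ref{prop:X'homotopy} combined with the Alexander duality theorem in the sphere $\pa\Delta_n\cong S^{n-1}$, with no further proof written out, and your indexing (feeding $m=n-1$ into $\tilde H^{j}(S^m-X)\simeq \tilde H_{m-1-j}(X)$) is correct. Your caveat about reduced versus unreduced groups at the endpoints $j=0$ and $j=n-2$ is legitimate and is in fact glossed over by the paper itself; the statement is cleanest if one reads both sides as reduced (co)homology.
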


\subsection{The dual complex $c(X)$ of Bj\"orner and Tancer \cite{BT} }\label{sec:alexander}%\hfill\\\hfill\\
Recall that in \S \ref{du} we defined  its \textit{combinatorial Alexander dual} $c(X)$ for any simplicial subcomplex $X\subseteq\pa\Delta_n$.
The maximal simplices of $c(X)$ are in bijective correspondence with the external faces of $X$, $\ext(X)$. More precisely, we have the following:

\begin{lemma} Let $Y\subseteq \pa\Delta_n$ be a simplicial subcomplex. Then
\begin{eqnarray}\label{32}
f_i(Y) +  f_{n-1-i}(c(Y)) \, =\,  \binom {n+1} {i+1}, \quad i=0, 1, \dots, n-1.
\end{eqnarray}
Here $f_i(Y)$ denotes the number of $i$-dimensional simplexes in $Y$. 
A simplex $\sigma\subseteq \Delta_n$ is an external simplex for $Y$ if and only if 
the dual simplex $\widehat{\sigma}$ is a maximal simplex of the complex $c(Y)$. 
In particular we have
\begin{eqnarray}\label{33}
e_i(Y) = d_{n-i-1}(c(Y)), \quad i=0, \dots, n-1,\end{eqnarray}
where $e_i(Y)$ denotes the number of external $i$-dimensional faces of $Y$ and $d_j(Y)$ denotes the number of $j$-dimensional maximal simplexes of $Y$. 
\end{lemma}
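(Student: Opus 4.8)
The plan is to establish the three claims of the lemma by exploiting the explicit description of $c(Y)$ given in \S\ref{du}, in particular the properties of the dual operator $c = i\circ j$ collected in Lemma~\ref{pr} and the external/maximal correspondence of Lemma~\ref{me}. All three assertions are essentially bookkeeping consequences of the fact that $\sigma\mapsto\hat\sigma$ is a dimension-reversing involution on the faces of $\partial\Delta_n$: if $\dim\sigma = i$, then $|\sigma| = i+1$, so $|\hat\sigma| = (n+1)-(i+1)$ and hence $\dim\hat\sigma = n-1-i$.

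First I would prove \eqref{32}. By Lemma~\ref{pr}(1), for each simplex $\sigma\in\partial\Delta_n$ we have $\sigma\in Y$ if and only if $\hat\sigma\notin c(Y)$. Fixing $i$, the map $\sigma\mapsto\hat\sigma$ is a bijection between the $i$-dimensional faces of $\partial\Delta_n$ and the $(n-1-i)$-dimensional faces of $\partial\Delta_n$, the total number of either being $\binom{n+1}{i+1}$. Under this bijection the $i$-faces lying in $Y$ correspond exactly to the $(n-1-i)$-faces \emph{not} lying in $c(Y)$; therefore $f_i(Y)$ equals $\binom{n+1}{i+1}$ minus the number of $(n-1-i)$-faces of $c(Y)$, i.e. $f_i(Y) + f_{n-1-i}(c(Y)) = \binom{n+1}{i+1}$. (One should note that both $Y$ and $c(Y)$ are simplicial complexes by Lemma~\ref{pr}(5), so $f_{n-1-i}(c(Y))$ is well defined; and $Y$ proper guarantees $c(Y)$ is nonempty, though the identity holds formally regardless.)

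Next, the middle sentence — a simplex $\sigma$ is external for $Y$ iff $\hat\sigma$ is a maximal simplex of $c(Y)$ — is precisely the content of Lemma~\ref{me}, so here I would simply cite it. Finally, \eqref{33} follows by combining this correspondence with the dimension count: $\sigma\mapsto\hat\sigma$ restricts to a bijection from the set of external $i$-dimensional faces of $Y$ onto the set of maximal $(n-1-i)$-dimensional faces of $c(Y)$, whence $e_i(Y) = d_{n-1-i}(c(Y))$ for $i = 0,\dots,n-1$.

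I do not anticipate a serious obstacle: the only point requiring a little care is making sure the dimension arithmetic $\dim\hat\sigma = n-1-\dim\sigma$ is applied consistently (it is easy to be off by one between ``dimension'' and ``cardinality''), and that the edge cases — $\sigma$ a vertex versus $\sigma$ an $(n-1)$-face, and the exclusion of $[n]$ itself from $\partial\Delta_n$ — are handled correctly. Everything else is immediate from Lemmas~\ref{pr} and~\ref{me}.
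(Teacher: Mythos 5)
Your proposal is correct and follows essentially the same route as the paper: equation (\ref{32}) from the dimension-reversing bijection $\sigma\mapsto\hat\sigma$ together with Lemma \ref{pr}(1), and equation (\ref{33}) by restricting that bijection via Lemma \ref{me}. The dimension bookkeeping $\dim\hat\sigma = n-1-\dim\sigma$ is handled correctly throughout.
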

\begin{proof}%\marginpar{rewrite} 
The map $\sigma\mapsto \hat\sigma$ is a bijection between the set of $i$-dimensional non-simplexes of $Y$ and the set of $(n-i-1)$-dimensional simplexes of the dual $c(Y)$; this proves (\ref{32}). By Lemma \ref{me} this map is a bijection between the set 
$E_i(Y)$ the set of $i$-dimensional external simplexes of $Y$ and the set of maximal simplexes of $c(Y)$ of dimension $n-i-1$; this proves (\ref{33}). 
%
%Let $\sigma$ be a simplex of dimension $i$ which is external to $X$. Then $|V(\sigma)| = i+1$, so $|[n]-V(\sigma)| = n-i$ and thus 
%$\widehat{\sigma}$ must be $(n-i-1)-$dimensional. $\sigma\not\in X$ so $\widehat{\sigma}\in\check{X}$.
%
%All that remains to show is that this is maximal. Suppose it were not, then there would exist some $(n-i-1)-$dimensional $\tau \in\check{X}$ with $\widehat{\sigma}\subseteq\tau$ which corresponds to an $(i-1)-$dimensional $\widehat{\tau}\subseteq\sigma$ with $\widehat{\tau}\not\in X$. But an $(i-1)-$dimensional face must lie in the boundary of an $i-$simplex, so $\widehat{\tau}\subseteq\partial\sigma\in X$, a contradiction. So $\tau\in X$.
\end{proof}

\begin{lemma}\label{lem:dualiso}
The nerve of the cover of $c(X)$ by its maximal simplices is isomorphic to the simplicial complex $X'$ (as defined in \S \ref{xprime}). 
\end{lemma}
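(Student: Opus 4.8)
The plan is to exhibit an explicit simplicial isomorphism between the nerve of the cover of $c(X)$ by its maximal simplices and the complex $X'$ of \S\ref{xprime}, using the bijection between maximal simplices of $c(X)$ and external faces of $X$ established in Lemma \ref{me} (equivalently equation (\ref{33})). First I would record the vertex correspondence: the vertices of $\mathrm{Nerve}(c(X))$ are the maximal simplices of $c(X)$, and by Lemma \ref{me} each such maximal simplex is of the form $\hat\sigma$ for a unique external face $\sigma\in E(X)$; since the vertices of $X'$ are by definition exactly the elements of $E(X)$, the assignment $\hat\sigma\mapsto\sigma$ is a bijection on vertex sets. It remains to check that a collection of maximal simplices $\hat\sigma_1,\dots,\hat\sigma_k$ of $c(X)$ has nonempty common intersection (in the geometric realisation of $c(X)$) if and only if $\sigma_1,\dots,\sigma_k$ span a simplex of $X'$, i.e. iff $\bigcup_{i=1}^k V(\sigma_i)\neq[n]$.

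The key computation is to identify $\hat\sigma_1\cap\dots\cap\hat\sigma_k$ inside $c(X)$. Working at the level of the abstract simplicial complex, $\hat\sigma_1\cap\dots\cap\hat\sigma_k = [n]-\bigcup_{i=1}^k V(\sigma_i)$ as a subset of $[n]$; this subset is a simplex of $c(X)$ precisely when it is nonempty (any subset of a simplex $\hat\sigma_i\in c(X)$ is again in $c(X)$, since $c(X)$ is a simplicial complex by Lemma \ref{pr}(5)). So the intersection of the (closed) maximal simplices is nonempty exactly when $[n]-\bigcup V(\sigma_i)\neq\varnothing$, i.e. when $\bigcup_{i=1}^k V(\sigma_i)\neq[n]$ — which is exactly the defining condition for $\{\sigma_1,\dots,\sigma_k\}$ to be a simplex of $X'$. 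This matches the combinatorics already used in the proof of Proposition \ref{prop:X'homotopy}, where the analogous intersection pattern of the stars $\St(\sigma_i)$ was computed, so the two nerves are literally the same abstract complex.

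One technical point to handle carefully is that the nerve of a cover by \emph{closed} simplices should be taken with the convention that a finite subfamily contributes a simplex iff the intersection of the closed simplices is nonempty; since all simplices here are faces of a common simplex and $c(X)$ is a complex, nonempty intersection of closures is equivalent to the intersection set $[n]-\bigcup V(\sigma_i)$ being a genuine (nonempty) face of $c(X)$, so there is no ambiguity. I expect the main obstacle to be purely bookkeeping: making sure the duality bijection $\sigma\mapsto\hat\sigma$ from Lemma \ref{me} is used with the correct orientation (external faces of $X$ $\leftrightarrow$ maximal faces of $c(X)$) and that the "proper subset of $[n]$" condition in the definition of $X'$ is matched precisely with "complement nonempty." Once the vertex bijection and the face-compatibility are both in place, the isomorphism of simplicial complexes follows immediately, completing the proof.
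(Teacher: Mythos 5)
Your proposal is correct and follows essentially the same route as the paper: both identify the vertices of the nerve with $E(X)$ via the bijection of Lemma \ref{me}, and both observe that the intersection of the maximal simplices $\hat\sigma_1,\dots,\hat\sigma_k$ of $c(X)$ is the simplex on the vertex set $[n]-\bigcup_{i=1}^k V(\sigma_i)$, hence is nonempty precisely when $\bigcup_{i=1}^k V(\sigma_i)\neq[n]$, which is the defining condition for a simplex of $X'$. The only difference is notational (the paper names the maximal simplices $\sigma_i$ and the external faces $\hat\sigma_i$, the reverse of your convention), and your remark about closed versus open intersections is a harmless clarification the paper leaves implicit.
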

\begin{proof}
Let $\mathcal{M}$ denote the cover of $c(X)$ by maximal simplices. Consider a set of maximal simplexes $\{\sigma_1, \dots, 
\sigma_k\}$, where $\sigma_i\in \mathcal M$. Each dual simplex $\hat\sigma_i$ is external for $X$. 
The intersection $\cap_{i=1}^k \sigma_i$ is a simplex with the vertex set $\cap_{i=1}^k V(\sigma_i)$ and the intersection 
$\cap_{i=1}^k \sigma_i$ is non-empty if and only if $\cap_{i=1}^k V(\sigma_i)\not=\varnothing$. We see that any nonempty intersection is contractible. Since $V(\hat\sigma_i)$ is the complement of $V(\sigma_i)$, we obtain that $\cap_{i=1}^k V(\sigma_i)=\varnothing$ if and only if 
$\cup_{i=1}^k V(\hat\sigma_i) =[n]$. Therefore, we see that the nerve of $\mathcal M$ can be described as the simplicial complex with the vertex set $E(X)$ where a set of external simplexes forms a simplex if and only if the union of their vertex sets is not equal $[n]$. This complex coincides with $X'$ as defined in \S \ref{xprime}. 
\end{proof}

%Combining the previous Lemma with Proposition \ref{prop:X'homotopy} we obtain:

\begin{corollary}\label{cor:dualshomoequ}
For a simplicial subcomplex $X\subseteq \partial \Delta_n$, the geometric realisation of the simplicial complex $c(X)$ is homotopy equivalent to $ X'$ and to the complement $\pa\Delta_n-X$. 
\end{corollary}%\marginpar{do we need to have an open cover?}
\begin{proof} The cover $\mathcal M$ by maximal simplexes satisfies the conditions of the Nerve Theorem, see \cite{hatcher}, Corollary 4G.3. The first claim follows from the previous Lemma. The second claim follows from Proposition \ref{prop:X'homotopy}. 
%The intersection of any two simplices is a face of both of them, and hence it is contractible. Thus we attain the trivial corollary that the intersection of any number of maximal simplices of $\check{X}$ is contractible and hence we are able to apply the Nerve Theorem in the case described by Lemma~\ref{lem:dualiso} to conclude our result.
\end{proof}
\begin{corollary}\label{cor97} For any simplicial subcomplex $X\subseteq \pa \Delta_n$ and for any abelian group $G$ one has $$H^j(c(X);G) \simeq 
H_{n-2-j}(X;G), \quad \mbox{where}\quad  j=0, 1, \dots, n-2.$$
\end{corollary}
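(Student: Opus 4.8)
The plan is to combine the classical Alexander duality theorem on the sphere $\pa\Delta_n \cong S^{n-1}$ with the homotopy equivalence established in Corollary \ref{cor:dualshomoequ}. Recall that Corollary \ref{cor:dualshomoequ} asserts that the geometric realisation of $c(X)$ is homotopy equivalent to the complement $\pa\Delta_n - X$. Therefore $H^j(c(X);G) \cong H^j(\pa\Delta_n - X;G)$ for all $j$, and it suffices to identify $H^j(\pa\Delta_n - X;G)$ with $H_{n-2-j}(X;G)$.

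The key step is to invoke the Alexander duality theorem in the form: for a compact, locally contractible, nonempty proper subspace $X$ of the sphere $S^{n-1}$, one has $\tilde H^j(S^{n-1} - X;G) \cong \tilde H_{n-2-j}(X;G)$ for all $j$ (see e.g. \cite{hatcher}, Theorem 3.44, adjusting indices for $S^{n-1}$ in place of $S^n$). Since $X$ is a finite simplicial complex it is certainly compact and locally contractible, and since $X$ is a proper subcomplex of $\pa\Delta_n$ it is a proper subspace, so the hypotheses are met. The reduced groups agree with the unreduced ones in the range $j = 0, \dots, n-2$ except for the interaction of the reduction in degree $0$; one checks that in the stated range $0 \le j \le n-2$ the isomorphism $H^j(c(X);G) \cong H_{n-2-j}(X;G)$ holds on the nose, taking a moment of care with the $j=0$ and $j = n-2$ cases where a reduced/unreduced discrepancy could in principle arise.

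Concretely I would proceed as follows. First, cite Corollary \ref{cor:dualshomoequ} to replace $c(X)$ by $\pa\Delta_n - X$ on the cohomology side. Second, apply Alexander duality on $\pa\Delta_n$ to obtain $\tilde H^j(\pa\Delta_n - X;G) \cong \tilde H_{n-2-j}(X;G)$. Third, pass from reduced to unreduced homology and cohomology, noting that $X \neq \varnothing$ and $X \neq \pa\Delta_n$ so that both $X$ and its complement are nonempty; this is where one verifies the index range is exactly $j = 0, 1, \dots, n-2$. This corollary is in fact an immediate consequence of the analogous Proposition stated just before it for $X'$ (``Applying the Alexander duality theorem combined with Proposition \ref{prop:X'homotopy}...''), composed with the homotopy equivalence $|c(X)| \simeq X'$ from Lemma \ref{lem:dualiso}; so an even shorter route is simply to say that $H^j(c(X);G) \cong H^j(X';G) \cong H_{n-2-j}(X;G)$, the first isomorphism by Lemma \ref{lem:dualiso} (via the Nerve Theorem) and the second by the already-proven Proposition.

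The main obstacle, such as it is, is purely bookkeeping: making sure the dimension shift is stated correctly for $\pa\Delta_n$ (a sphere of dimension $n-1$, not $n$) and that the reduced-versus-unreduced subtleties at the ends of the range do not introduce spurious correction terms. There is no substantive topological difficulty, since all the real work — the nerve-theoretic identification of $|c(X)|$ with the complement — was already done in Corollary \ref{cor:dualshomoequ} and Proposition \ref{prop:X'homotopy}.
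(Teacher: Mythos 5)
Your proof is correct and follows exactly the route the paper intends: the corollary is an immediate consequence of Corollary \ref{cor:dualshomoequ} (the homotopy equivalence $|c(X)|\simeq X'\simeq \pa\Delta_n-X$) combined with the Alexander duality statement already established for $X'$, which is why the paper states it without further argument. Your added care about the reduced/unreduced discrepancy at the ends of the range and the properness/nonemptiness hypotheses is a reasonable refinement of what the paper leaves implicit.
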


Taking here $G=\Bbb Q$ we obtain equality for the Betti numbers: $$b_j(c(X))= b_{n-2-j}(X), \quad j=0, 1, \dots, n-2.$$

Next we restate Proposition \ref{pn} as follows: 

\begin{prop} For a fixed $n$ consider two probability spaces $(\Omega_n^\ast, \op)$ and $(\Omega_n^\ast, \up')$ where 
the probability measure $\op$ is defined with respect to a set of probability parameters $p_\sigma$ and the probability measure $\up'$ is 
defined with respect to a set of probability parameters $p'_\sigma$ satisfying 
$$p'_\sigma = q_{\hat \sigma}= 1 - p_{\hat \sigma}.$$
The map $c: (\Omega_n^\ast, \op)\to (\Omega_n^\ast, \up')$, where $X\mapsto c(X)$, is an isomorphism  of probability spaces. 
For an integer $j\in [n]$, consider the $j$-dimensional Betti number $$b_j: \Omega^\ast \to \Z$$ and its 
distribution functions $F_j^{\op}(x)$ and $F_j^{\up'}(x)$ with respect to the measures $\op$ and $\up'$ correspondingly. Then 
\begin{eqnarray}
F_j^{\op}(x) \, \equiv\,  F_{n-2-j}^{\up'}(x).
\end{eqnarray}
\end{prop}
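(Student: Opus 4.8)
The plan is to combine the two facts that have already been assembled in this section. Proposition \ref{pn}(2) tells us that for every simplicial complex $Y\subseteq\pa\Delta_n$ we have $\op(c(Y))=\up'(Y)$, and since $c$ is an involution by Lemma \ref{pr}(2), the map $c:\Omega_n^\ast\to\Omega_n^\ast$ is a bijection carrying the measure $\up'$ to the measure $\op$; concretely, $\up'(c(X))=\op(X)$ for all $X$. This is precisely the statement that $c$ is an isomorphism of probability spaces, so the first two sentences of the Proposition require essentially nothing beyond restating Proposition \ref{pn} and Lemma \ref{pr}(2).

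For the distributional identity, the key input is Corollary \ref{cor97} (equivalently its Betti-number consequence $b_j(c(X))=b_{n-2-j}(X)$ for $j=0,1,\dots,n-2$). The argument runs as follows. By definition $F_j^{\op}(x)=\op\big(\{X\in\Omega_n^\ast : b_j(X)\le x\}\big)$. Using the isomorphism $c$ to transport this event, $\op\big(\{X : b_j(X)\le x\}\big)=\up'\big(\{c(X) : b_j(X)\le x\}\big)=\up'\big(\{Z : b_j(c(Z))\le x\}\big)$, where in the last step I substitute $Z=c(X)$ and use that $c$ is a bijection. Now apply $b_j(c(Z))=b_{n-2-j}(Z)$ to rewrite the event as $\{Z : b_{n-2-j}(Z)\le x\}$, whose $\up'$-measure is exactly $F_{n-2-j}^{\up'}(x)$. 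Chaining these equalities gives $F_j^{\op}(x)=F_{n-2-j}^{\up'}(x)$ for every $x$, which is the claimed identity.

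I do not anticipate a serious obstacle here: the Proposition is a repackaging of Proposition \ref{pn} and Corollary \ref{cor97}, and the only things to be careful about are bookkeeping points — that $c$ is genuinely a measure-preserving bijection (which needs both $\op(c(Y))=\up'(Y)$ and $c\circ c=\mathrm{id}$), and that the Betti-number relation is being applied in the correct range $0\le j\le n-2$ (outside this range both sides are identically the degenerate distribution, so the identity is trivially true there, but it is worth a remark). If one wants to be scrupulous about rational versus integral coefficients, note that the distribution functions $F_j$ are defined via the rational Betti numbers, so Corollary \ref{cor97} should be invoked with $G=\Bbb Q$, exactly as in the displayed Betti-number equality following that corollary.

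One stylistic point for the write-up: since the Proposition's first assertion ("$c$ is an isomorphism of probability spaces") is used to justify the transport of events in the second assertion, I would prove them in that order, stating the isomorphism claim first as an immediate consequence of Proposition \ref{pn}(2) and Lemma \ref{pr}(2), and then deriving the distributional identity from it together with Corollary \ref{cor97}.
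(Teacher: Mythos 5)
Your proposal is correct and follows the same route as the paper, which proves this proposition in one line by combining Proposition \ref{pn} (the measure identity $\op(c(Y))=\up'(Y)$) with Corollary \ref{cor97} (the Betti-number duality $b_j(c(X))=b_{n-2-j}(X)$). Your write-up simply makes explicit the transport of the event $\{b_j\le x\}$ under the involution $c$, which is exactly the bookkeeping the paper leaves implicit.
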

\begin{proof}
This follows by combining Corollary \ref{cor97} and Propositon \ref{pn}. 
\end{proof}
Note that the distribution function $F_j^{\op}$ is defined by the equality
$$F_j^{\op}(x) =\op(b_j(Y)\le x)$$
and similarly, 
$$F_j^{\up'}(x) =\up'(b_j(Y)\le x).$$

Summarising, we see that for a fixed $n$, studying Betti numbers in the upper model reduces to studying Betti numbers in the lower model and vice versa. However, in the limit when $n\to \infty$ one needs to deal with the dimension shift $i\to n-1-i$ which creates an additional technical difficulty.

\section{The notion of critical dimension in the lower model}

In a recent paper \cite{farber4} the authors studied Betti numbers of random simplicial complexes $Y$  in the lower model. It was shown   that the 
Betti numbers have a very specific pattern which can be described using the notion of {\it a critical dimension} $k_\ast$. 
Roughly, it was established in \cite{farber4} that homologicaly a random simplicial complex in the lower model can be well approximated by a wedge of spheres of dimension $k_\ast$. 

More precisely (see \cite{farber4} for more detail), the critical dimension $k_\ast$ in the lower model satisfies: 
\begin{enumerate}
\item The Betti number $b_{k_\ast}(Y)$ in the critical dimension is large,  $$b_{k_\ast}(Y) \sim C \cdot n^{a_{k_\ast}},$$ where $a_{k_\ast}>0$, $C>0$ are constants, a.a.s.; 
\item The reduced Betti numbers $\tilde b_j(Y)$ in all dimensions below the critical dimension $j<k_\ast$ vanish, a.a.s.; 
\item The Betti numbers $b_j(Y)$ in dimensions above the critical dimension $j>k_\ast$ are \lq\lq significantly smaller\rq\rq\,  than $b_{k_\ast}(Y)$, a.a.s. One possibility to clarify the words \lq\lq significantly smaller\rq\rq\, is by means of an upper bound 
$b_j(Y) \le n^{a_j}$, a.a.s., where $a_j<a_{k\ast}$. 
\item Homology groups in dimensions above the critical dimension are generated by primitive cycles of bounded size (cf. Theorems 20, 21 in \cite{farber4});
\item If the critical dimension is positive $k_\ast>0$ then the random complex $Y$ is connected, a.a.s.
\item If the critical dimension is greater than 1, $k_\ast>1$, then the fundamental group $\pi_1(Y)$ has property (T); 
\item If the critical dimension is greater than 2, $k_\ast> 2$, then $Y$ is simply connected, a.a.s.
\item The critical dimension $k_\ast$ and the exponents $a_{k_\ast}$ can be explicitly calculated through the probability parameters $p_\sigma$. 
\end{enumerate}

In the following section we shall investigate the Betti numbers in the upper model hoping to find a similar pattern of behaviour of the Betti numbers. However, as we shall see, the Betti numbers in the upper model behave differently. 

For convenience of the reader we include below the definition of the critical dimension $k_\ast$ in the lower model under the assumption
that $p_\sigma=0$ for $\dim \sigma >r$ and 
$$p_\sigma = n^{-\alpha_i}, \quad \mbox{for}\quad i=0, 1, \dots, r.$$
In other words, we are considering random simplicial complexes of dimension less or equal than $r$. 
%%
%%
%%We shall now provide some analogous results to those in Sections ?? for the lower probability measure $\up$ on $\Omega_n^*$. We give all of the following results without proof or much discussion as they are from a previous work, \cite{farber4}, and are included here for completeness.
%%
%%Similarly to Section ?? we define our probability parameters by
%\vskip 2cm
%
%$$p_\sigma =\begin{cases}
%n^{-\alpha_i}&:\quad\dim\sigma = i\le r\\
%0&:\quad \dim\sigma > r,
%\end{cases}$$
%where $\alpha\in\Bbb R^{r+1}$. Contrary to Section ?? the only conditions we put upon the vector $\alpha$ are that each $\alpha_i\geq 0$.
%
%We begin by reintroducing some terminology and notation.

Define linear maps $\psi_k: \Bbb R^{r+1}\to \Bbb R$ by
\begin{align*}
\psi_k(\alpha)=\sum_{i=0}^r \binom{k}{i}\alpha_i,\quad\quad k=0,\ldots, r.
\end{align*}

Since $\binom k i < \binom {k+1} i$ for $i>0$ we see that
\begin{align*}\label{ineq1}
\psi_0(\alpha)\leq\psi_1(\alpha)\leq\psi_2(\alpha)\leq\ldots\leq\psi_r(\alpha).
\end{align*}
Moreover, if for some  $j\geq 0$ one has $\psi_j(\alpha)<\psi_{j+1}(\alpha)$ then
\begin{equation*}\label{ineq2}\psi_j(\alpha)<\psi_{j+1}(\alpha)<\ldots<\psi_r(\alpha).
\end{equation*}
We introduce the following convex domains in $\Bbb R^{r+1}_+$:
\begin{eqnarray}\label{defdk}
\D_k = \{\alpha\in \R^{r+1}_+\, :\,  \psi_k(\alpha)<1<\psi_{k+1}(\alpha)\},
\end{eqnarray}
where $k=0, 1, \dots, r-1$. 
One may also introduce the domains
$$\D_{-1}=  \{\alpha\in \R^{r+1}_+\, :\,  1<\psi_0(\alpha)\}, \quad \D_{r}=  \{\alpha\in \R^{r+1}_+\, :\,  \psi_r(\alpha)<1\}.$$
The domains $$\D_{-1}, \D_0, \D_1,\dots, \D_r$$ are disjoint and their union is $$\bigcup_{j=-1}^r \D_j \, = \, \R^{r+1}_+- \bigcup_{i=0}^r H_i,$$
where $H_i$ denotes the hyperplane 
$$H_i=\{\alpha\in \R^{r+1}; \psi_i(\alpha)=1\}.$$ 
%We shall show that hyperplanes $H_0, \dots, H_r$
%correspond to {\it phase transitions in homology}; in other words these hyperplanes separate areas 
%where different geometric and topological properties are satisfied for large $n$, with high probability. 
%The intersections of the domains $\D_k$ with the plane $\alpha_0=\alpha_3=\dots=0$ is shown on the Figure \ref{domains}. 
%
%Like in our previous publications \cite{CF14}, \cite{CF15}, \cite{CF15a}, we shall consider here the multi-parameter random simplicial complexes with the multi-parameters $p_i=n^{-\alpha_i}$ where 
%in general the vector  of exponents
%$$\alpha \, =\, (\alpha_0, \dots, \alpha_r)\in \R_+^{r+1}$$ is a function of $n$, i.e. $\alpha=\alpha(n)$. In this paper 
%we shall additionally assume that the vector of exponents
%$\alpha=\alpha(n)$ is either constant or more generally has a limit $$\alpha_\ast=\lim \alpha(n).$$

\begin{defn}\label{critical} The critical dimension $k=k_\ast$ of a random simplicial complex $Y$ in the lower model is defined by 
the condition
\begin{eqnarray}\label{crit}
\alpha\in \D_k,\quad \mbox{where}\quad k=-1, 0, \dots, r,\end{eqnarray}
where $\alpha=(\alpha_0, \alpha_1, \dots, \alpha_r)$ is the vector of the exponents. 
\end{defn}

In other words, the critical dimension $k=k_\ast$ satisfies the inequalities
\begin{eqnarray}\label{crit5}\sum_{i=0}^r \binom k i \alpha_i \, <\, 1\, <\,  \sum_{i=0}^r \binom {k+1} r \alpha_i\end{eqnarray}

We must emphasise that the notion of critical dimension in the lower model is defined only for {\it generic} vectors of exponents $\alpha=(\alpha_0, \dots, \alpha_r)$, i.e. when equalities do not happen in (\ref{crit5}). 

\section{Critical dimension and spread in the upper model}

In this section we introduce the notions of {\it a critical dimension} and of {\it a spread} for the upper model and explore its relevance to the properties of face numbers of random simplicial complexes. 

\subsection{}\label{101} As above, let $\Omega^\ast_n$ denote the set of simplicial subcomplexes of $\Delta_n$. 

In this section we shall consider the upper probability measure $\op$ on $\Omega^\ast_n$ under the following 
 assumptions on the probability parameters $p_\sigma$: 
 
 {\it 

(a) all probability parameters $p_\sigma=0$ vanish for $\dim \sigma>r$ where $r\ge 0$ is a fixed integer. 

(b) For $i\le r$ one has $p_\sigma=n^{-\alpha_i}$ where $i=\dim \sigma$ and $\alpha_i>0$ is a fixed positive real number. 

(c) The exponents $\alpha_i$ are not integers, $\alpha_i\notin \Z$, where $i=0, 1,  \dots, r$. 

(d) All the differences $\alpha_i-\alpha_j\notin \Z$ are not integers, where $i\not=j$, $i, j =0, 1,  \dots, r$.}

We note that (b) in particular requires that $p_\sigma$ depends only on the dimension of simplex $\sigma$.
The assumptions (c) and (d) are satisfied for a "generic" set of exponents $\alpha_0, \dots, \alpha_r$. Many results stated below are valid with relaxed assumptions (a) - (d) but then the statements require more complicated notations and explanations. For this reason we decided to restrict ourselves to the assumptions (a) - (d) aiming at having the most transparent statements and definitions valid for a generic set of exponents.

By Remark \ref{rm11} we know that the measure $\op$ is supported on the set of all $r$-dimensional simplicial complexes $Y\subseteq \Delta_n$.

\subsection{} Next we introduce the following notations.
Denote $$\beta_i=i+1-\alpha_i$$ and 
\begin{eqnarray}\label{betastar}
\beta^\ast=\max\{\beta_0, \beta_1, \dots, \beta_r\}, \quad i=0, 1, \dots, r.\end{eqnarray}
 We set 
\begin{eqnarray}k^\ast=\lfloor \beta^\ast\rfloor.\end{eqnarray}
Note that $k^\ast=k^\ast(\alpha)$ is an integer depending on the initial vector of exponents $\alpha=(\alpha_0, \dots, \alpha_r)$. 
Besides, $$k^\ast<\beta^\ast<k^\ast+1,$$
the strong inequalities hold due to our genericity assumption (c). 

\begin{definition}\label{def102}
The integer $k^\ast=k^\ast(\alpha)$ will be called the critical dimension of the random simplicial complex $Y$ in the upper model. 
\end{definition}

Due to our assumption (d) there exists a single index $i^\ast\in \{0, \dots, r\}$ such that $\beta_{i^\ast}=\beta^\ast$.

The following observation will be useful:

\begin{lemma}
One has $k^\ast\le i^\ast.$
\end{lemma}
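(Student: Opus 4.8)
The plan is to unwind the definitions directly; no probabilistic or combinatorial machinery is required. By assumption (d) there is a unique index $i^\ast \in \{0, \dots, r\}$ with $\beta_{i^\ast} = \beta^\ast$, so that
\[
\beta^\ast = \beta_{i^\ast} = i^\ast + 1 - \alpha_{i^\ast}.
\]
Assumption (b) guarantees $\alpha_{i^\ast} > 0$, which gives the strict inequality $\beta^\ast < i^\ast + 1$. This is the only substantive input: positivity of $\alpha_{i^\ast}$ is exactly what separates $\beta^\ast$ from the integer $i^\ast + 1$.

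It then remains to pass to floors. Since $k^\ast = \lfloor \beta^\ast \rfloor \le \beta^\ast$ always holds, we obtain $k^\ast \le \beta^\ast < i^\ast + 1$, i.e. $k^\ast < i^\ast + 1$. As $k^\ast$ and $i^\ast$ are both integers, this forces $k^\ast \le i^\ast$, which is the assertion. Note that the genericity assumption (c) is not needed for this particular bound (it is used elsewhere only to upgrade $k^\ast \le \beta^\ast$ to the strict $k^\ast < \beta^\ast$), and that the argument is insensitive to the sign of $\beta^\ast$, so it covers the degenerate situation where $k^\ast$ is negative.

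There is essentially no obstacle here: the statement is a one-line consequence of the definitions of $\beta^\ast$, $i^\ast$, and $k^\ast$ together with $\alpha_{i^\ast} > 0$. The only thing to be careful about is bookkeeping — making sure one tracks which inequalities are strict and recalling that a strict inequality between an integer and $i^\ast + 1$ collapses to the non-strict bound $k^\ast \le i^\ast$.
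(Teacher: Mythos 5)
Your proof is correct and is essentially identical to the paper's one-line argument, which chains $k^\ast=\lfloor \beta^\ast\rfloor<\beta^\ast=\beta_{i^\ast}<i^\ast+1$ and concludes by integrality. Your observation that only $\alpha_{i^\ast}>0$ (and not the genericity assumption (c)) is needed is a minor but accurate refinement.
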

\begin{proof}
This follows from the inequalities
$$k^\ast=\lfloor \beta^\ast\rfloor<\beta^\ast=\beta_{i^\ast}<i^\ast+1.$$
\end{proof}

\begin{definition}\label{defspread}
We shall call the non-negative integer $i^\ast - k^\ast=s=s(\alpha)$ the spread.
\end{definition}

\begin{example}{\rm 
Let us show that the condition $k^\ast(\alpha)<0$ is equivalent to the property that $\op(\varnothing)=1$, a.a.s. Indeed, 
$$\op(\varnothing) = \prod_{\sigma\in \Delta_n} q_\sigma = \prod_{i=0}^r q_i^{\binom{n+1}{i+1}} = \prod_{i=0}^r (1-n^{-\alpha_i})^{\binom{n+1}{i+1}} .$$
One has $k^\ast<0$ if and only if  $\beta_i<0$ for any $i=0, 1, \dots, r$. 
Since $\binom {n+1}{i+1} n^{-\alpha_i} = n^{\beta_i} \cdot (((i+1)!)^{-1} +o(1))$ we may apply Remark \ref{rk1014} to obtain
$$\op(\varnothing) = 1- \sum_{i=0}^r \frac{n^{\beta_i}}{(i+1)!} +o(1) = 1+o(1).$$

On the other hand, suppose that $\op(\varnothing)\to 1$. Since 
$$\op(\varnothing)=\prod_{i=0}^r(1-n^{-\alpha_i})^{\binom{n+1}{i+1}}$$
we see (since each term in this product is smaller than 1) that for each $i=0, \dots, r$ one must have 
$(1-n^{-\alpha_i})^{\binom{n+1}{i+1}}\to 1$ which implies $\binom {n+1}{i+1} n^{-\alpha_i}\to 0$, i.e. $\beta_i<0$. 
\qed
}
\end{example}
\begin{remark} \label{rk1014} {\rm 
We have used the following fact: If $N\to \infty$ and $Nx\to 0$, $x>0$ then 
\begin{eqnarray}\label{41}
(1-x)^N = 1-Nx +(xN)^2(1/2 +o(1)).
\end{eqnarray}
%Here the symbol $O(f)$ denotes a function $g$ such that the ratio $g/f$ has a finite limit. 
To prove (\ref{41}) one observes that in the expansion
$$(1-x)^N = \sum_{k=0}^N (-1)^k \binom N k x^k$$ one has $\binom N k x^k > \binom N {k+1} x^{k+1}$ for $k\ge 2$ assuming that $xN<3$. 
Hence we may apply the known result about alternating series with decreasing terms. 
}
\end{remark}

%For future reference we shall state the result of the previous examples as follows:
%
%\begin{corollary} If $\alpha_i>i+1$ for $i=0, 1, \dots, r$ then $\op(\emptyset)=1$, a.a.s. 
%\end{corollary}
\subsection{} For any $\ell=0, 1, \dots, r$ consider the function $$f_\ell: \Omega^\ast_n\to \Z$$ which assigns the number of $\ell$-dimensional faces $f_\ell(Y)$ to a random subcomplex $Y\subseteq \Delta_n$. 
Using Example \ref{ex3} we find
\begin{eqnarray}\label{expectationk}
\E(f_\ell) = \sum_{\dim\sigma =\ell} \left(1- \prod_{\tau\supseteq\sigma}q_\tau\right) = \binom {n+1}{\ell+1}\cdot \left(1 - 
\prod_{i=\ell}^r q_i^{\binom {n-\ell}{i-\ell}}\right),\end{eqnarray}
where $$q_i=1-n^{-\alpha_i}, \quad i=0, 1, \dots, r.$$

\begin{lemma} \label{lm101} Let $k^\ast$ denote the critical dimension as defined in Definition \ref{def102}. 
Then for any $\ell<k^\ast$ 
%Assume that an integer $k\le r$ is fixed. If there exists an integer $i=0, 1, \dots, r$ such that $i-\alpha_i>k$ then 
%$$f_k=\binom {n+1} {k+1}, \quad \mbox{a.a.s.},$$ and therefore 
a random complex $Y$ contains the full $\ell$-dimensional skeleton of $\Delta_n$, a.a.s. 
More precisely, one has $$f_\ell(Y)=\binom {n+1} {\ell+1}$$ with probability at least 
\begin{eqnarray}\label{36}
%1- n^c \exp\left(- n^{\{\beta^\ast\} }\right),
1- n^{k^\ast} \cdot \exp\left(- \frac{n^{\{\beta^\ast\}}}{2\cdot (s(\alpha)+1)!}\right).
\end{eqnarray}
where $\{\beta^\ast\}>0$ denotes the fractional part of $\beta^\ast$, i.e. $\{\beta^\ast\}=\beta^\ast - k^\ast$ and $s(\alpha)\ge 0$ is the spread (see Definition \ref{defspread}).
\end{lemma}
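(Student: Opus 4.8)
The plan is to show that for $\ell < k^\ast$ the expected number of missing $\ell$-simplices of a random upper complex $Y$ tends to zero rapidly, and then invoke the first moment (Markov) inequality, exactly in the spirit of Lemma \ref{lm801}(1). First I would introduce, for each $\sigma \in \Delta_n$ with $\dim \sigma = \ell$, the indicator $X_\sigma$ of the event $\sigma \notin Y$; by Example \ref{ex3} we have $\mathbb{E}(X_\sigma) = \prod_{\tau \supseteq \sigma} q_\tau = \prod_{i=\ell}^r q_i^{\binom{n-\ell}{i-\ell}}$. Setting $X = \sum_{\dim\sigma=\ell} X_\sigma$, the event $\{f_\ell(Y) = \binom{n+1}{\ell+1}\}$ is exactly $\{X = 0\}$, so it suffices to bound $\mathbb{P}(X \geq 1) \leq \mathbb{E}(X) = \binom{n+1}{\ell+1} \prod_{i=\ell}^r q_i^{\binom{n-\ell}{i-\ell}}$.

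Next I would estimate the exponent. Using $q_i = 1 - n^{-\alpha_i}$ and $1 - x \leq e^{-x}$, we get $\mathbb{E}(X) \leq \binom{n+1}{\ell+1}\exp\!\left(-\sum_{i=\ell}^r n^{-\alpha_i}\binom{n-\ell}{i-\ell}\right)$. Since $\binom{n-\ell}{i-\ell} = \frac{n^{i-\ell}}{(i-\ell)!}(1+o(1))$, the $i$-th summand in the exponent is $n^{i-\ell-\alpha_i}\cdot(((i-\ell)!)^{-1}+o(1)) = n^{\beta_i - (\ell+1)}\cdot(((i-\ell)!)^{-1}+o(1))$, recalling $\beta_i = i+1-\alpha_i$. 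The dominant term is $i = i^\ast$, contributing (up to a positive constant factor) $n^{\beta^\ast - (\ell+1)}$. Because $\ell < k^\ast = \lfloor \beta^\ast \rfloor$, we have $\ell + 1 \leq k^\ast$, so $\beta^\ast - (\ell+1) \geq \beta^\ast - k^\ast = \{\beta^\ast\} > 0$; hence $\sum_{i=\ell}^r n^{-\alpha_i}\binom{n-\ell}{i-\ell} \geq c' n^{\{\beta^\ast\}}$ for some constant $c' > 0$ and large $n$. Meanwhile $\binom{n+1}{\ell+1} \leq n^{\ell+1} \leq n^c$ for a suitable constant $c$ (one may take $c = r+1$). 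Combining, $\mathbb{E}(X) \leq n^c \exp(-c' n^{\{\beta^\ast\}})$, and absorbing the constant $c'$ into the exponent (or noting the bound $(\ref{36})$ as stated allows such a constant) gives the claimed probability estimate $1 - n^c\exp(-n^{\{\beta^\ast\}})$, a.a.s. convergence to $1$ being immediate since the exponential beats any polynomial.

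The main technical point — really the only place care is needed — is the asymptotic lower bound on the exponent $\sum_{i=\ell}^r n^{-\alpha_i}\binom{n-\ell}{i-\ell}$: one must check that the term $i=i^\ast$ genuinely dominates and that no cancellation or lower-order interference spoils the bound $c' n^{\{\beta^\ast\}}$. This follows because all summands are positive (so we may simply discard all but $i=i^\ast$) and $\binom{n-\ell}{i^\ast-\ell} n^{-\alpha_{i^\ast}} \sim ((i^\ast-\ell)!)^{-1} n^{\beta^\ast-\ell-1} \geq ((i^\ast-\ell)!)^{-1} n^{\{\beta^\ast\}}$, using $k^\ast \le i^\ast$ (the preceding Lemma) to ensure $i^\ast - \ell > 0$ so the binomial coefficient is a genuine growing polynomial. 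Everything else is the routine first moment method already used verbatim in the proof of Lemma \ref{lm801}(1).
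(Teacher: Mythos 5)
Your argument is correct. It is the same first moment method as the paper's, but routed slightly differently: the paper invokes monotonicity --- it picks the dominant dimension $k=i^\ast$ (with $\beta_k=\beta^\ast$), observes that the pure random $k$-dimensional complex $Z$ of \S \ref{sec8} is contained in $Y$, and then quotes Lemma \ref{lm801}(1) to get the full $\ell$-skeleton of $Z$, whereas you compute $\E(X)$ for $Y$ itself directly from Example \ref{ex3}, keeping the whole product $\prod_{i=\ell}^r q_i^{\binom{n-\ell}{i-\ell}}$ and then discarding all but the $i=i^\ast$ term. The two computations are identical at the dominant order; your version is self-contained but reproves the content of Lemma \ref{lm801}(1) inline, while the paper's is shorter at the cost of the (harmless) step of checking that the hypotheses (\ref{plus1}) hold with $\omega = n^{-\alpha_{i^\ast}}\binom{n-\ell}{i^\ast-\ell}-(\ell+1)\log n$. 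The one point you rightly flag --- that the exponent you actually obtain is $c'\,n^{\{\beta^\ast\}}$ with $c'=((i^\ast-\ell)!)^{-1}$ rather than $n^{\{\beta^\ast\}}$ on the nose, so it cannot literally be absorbed into the polynomial prefactor $n^c$ when $c'<1$ --- is a genuine imprecision, but it is present identically in the paper's own proof (which also silently drops the $((k-\ell)!)^{-1}$ when writing $e^{-\omega}=n^c\exp(-n^{k-\ell-\alpha_k})$), and it does not affect the a.a.s.\ conclusion since $\ell<k^\ast$ gives strict room: $\beta^\ast-\ell-1\ge\{\beta^\ast\}$, and for $\ell<k^\ast-1$ the inequality is strict enough to swallow any constant.
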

\begin{proof} It is enough to take $\ell=k^\ast-1$. By the definition, $k^\ast=\lfloor \beta^\ast\rfloor$
and $\beta^\ast = \beta_{i^\ast}$, where $i^\ast= k^\ast +s(\alpha)$ with $s(\alpha)\ge 0$ being the spread. 
Denote $k=i^\ast$ and consider a pure random $k$-dimensional simplicial complex $Z$ with probability parameter $p=n^{-\alpha_k}$ as defined in \S \ref{sec8}. Clearly $Z$ can be viewed as a subcomplex of $Y$ and we may engage Lemma \ref{lm801} to help us to decide whether $Z$ contains the $(k^\ast-1)$-dimensional skeleton 
of the simplex $\Delta_n$. Writing 
\begin{eqnarray}\label{ppp}
p= n^{-\alpha_k} = \frac{k^\ast \cdot \log n +\omega}{\binom {n-k^\ast+1}{k-k^\ast +1}}\end{eqnarray}
and noting that $k-k^\ast = i^\ast - k^\ast =s(\alpha)$ and 
$$s(\alpha)+1 -\alpha_k = i^\ast +1 - \alpha_{i^\ast} - k^\ast = 
\beta^\ast - \lfloor \beta^\ast \rfloor = \{\beta^\ast\}$$
we can solve (\ref{ppp}) for $\omega$ and obtain
$$\omega \, \ge\,  \frac{n^{\{\beta^\ast\}}}{2\cdot (s(\alpha)+1)!} - k^\ast \cdot \log n. $$
Now, by Lemma \ref{lm801} we obtain that $Y$ contains the full $(k^\ast-1)$-dimensional skeleton with probability at least 
$$1- e^{-\omega}\ge 1- n^{k^\ast} \cdot \exp\left(- \frac{n^{\{\beta^\ast\}}}{2\cdot (s(\alpha)+1)!}\right).$$
\end{proof}

Next we examine the expectation $\mathbb E(f_k)$ for $k\ge k^\ast$.

\begin{lemma} \label{lm106} For any $k\ge k^\ast$ one has
\begin{eqnarray}\label{expectationkk}
{\mathbb E}(f_k) = \frac{1}{(k+1)!} \cdot \left(\sum_{i=k}^r \frac{n^{\beta_i}}{(i-k)!}\right)\cdot (1+o(1)).
\end{eqnarray}
\end{lemma}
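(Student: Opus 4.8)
The plan is to start from the exact formula \eqref{expectationk} for $\E(f_k)$, namely
$$\E(f_k) = \binom{n+1}{k+1}\left(1 - \prod_{i=k}^r q_i^{\binom{n-k}{i-k}}\right),$$
and to estimate the exponent $\sum_{i=k}^r \binom{n-k}{i-k}(-\log q_i)$ that governs the product. First I would observe that $\binom{n-k}{i-k} n^{-\alpha_i} = n^{i-k-\alpha_i}\cdot\left(\frac{1}{(i-k)!}+o(1)\right) = n^{\beta_i - (k+1)}\cdot\left(\frac{1}{(i-k)!}+o(1)\right)$. Since $k\ge k^\ast = \lfloor\beta^\ast\rfloor$ and $\beta^\ast < k^\ast+1 \le k+1$, every exponent $\beta_i - (k+1) < 0$, so each $\binom{n-k}{i-k} n^{-\alpha_i}\to 0$; in particular the product $\prod_{i=k}^r q_i^{\binom{n-k}{i-k}}$ is of the form $\prod (1-x_i)^{N_i}$ with $N_i x_i\to 0$, and applying Remark \ref{rk1014} (or just $-\log q_i = n^{-\alpha_i}(1+o(1))$) gives
$$\sum_{i=k}^r \binom{n-k}{i-k}(-\log q_i) = \sum_{i=k}^r \frac{n^{\beta_i-(k+1)}}{(i-k)!}(1+o(1)) = n^{\beta^\ast - (k+1)}\cdot\frac{1+o(1)}{(i^\ast - k)!},$$
where in the last step I use assumption (d): there is a unique $i^\ast$ with $\beta_{i^\ast}=\beta^\ast$, and all other terms $n^{\beta_i-(k+1)}$ are of strictly smaller polynomial order, hence negligible relative to the dominant one. (This dominance argument, with the uniform collection of the $1+o(1)$ factors, is the step requiring a little care but is routine.)

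Next I would split into two cases according to whether the total exponent $n^{\beta^\ast-(k+1)}$ tends to $0$ or not. If $\beta^\ast < k+1$ strictly in the sense that $n^{\beta^\ast-(k+1)}\to 0$ — which holds whenever $k\ge k^\ast$ unless $k=k^\ast$ and $\beta^\ast$ is, say, close to $k^\ast+1$; more precisely it always holds since $\beta^\ast - (k+1) \le \beta^\ast - (k^\ast+1) = \{\beta^\ast\}-1 < 0$ — then $1 - \prod q_i^{\binom{n-k}{i-k}} = \sum_{i=k}^r \frac{n^{\beta_i-(k+1)}}{(i-k)!}(1+o(1))$ by expanding $1-e^{-t}=t(1+o(1))$ for $t\to 0$. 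Multiplying by $\binom{n+1}{k+1} = \frac{n^{k+1}}{(k+1)!}(1+o(1))$ yields
$$\E(f_k) = \frac{n^{k+1}}{(k+1)!}\cdot\sum_{i=k}^r \frac{n^{\beta_i-(k+1)}}{(i-k)!}(1+o(1)) = \frac{1}{(k+1)!}\sum_{i=k}^r \frac{n^{\beta_i}}{(i-k)!}(1+o(1)),$$
which is exactly \eqref{expectationkk}. Here I should double-check that the single $(1+o(1))$ factor can be pulled out of the finite sum; this is legitimate because the sum has a dominant term $n^{\beta^\ast}/(i^\ast-k)!$ and the ratio of the whole sum to that term is bounded away from $0$ and $\infty$, so a uniform $(1+o(1))$ applies to the sum as written.

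The main obstacle, such as it is, is bookkeeping: ensuring the asymptotic expansions $\binom{n-k}{i-k} = \frac{n^{i-k}}{(i-k)!}(1+o(1))$ and $\binom{n+1}{k+1} = \frac{n^{k+1}}{(k+1)!}(1+o(1))$ are applied consistently, that the errors $n^{\beta_i}$ for $i\ne i^\ast$ genuinely are $o(n^{\beta^\ast})$ (this uses (d), since $\beta_i \ne \beta^\ast$ forces $\beta_i < \beta^\ast$ and the gap is a positive constant), and that the product $\prod q_i^{\binom{n-k}{i-k}}$ is legitimately linearised — i.e. that $\sum_{i=k}^r\binom{n-k}{i-k} n^{-\alpha_i}\to 0$, which is precisely the content of $k\ge k^\ast$. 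I would state each of these as a one-line sub-estimate and then assemble them. No genuinely hard analytic input is needed beyond Remark \ref{rk1014}.
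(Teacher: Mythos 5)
Your proposal is correct and follows essentially the same route as the paper: starting from (\ref{expectationk}), noting that $k\ge k^\ast$ forces every exponent $\beta_i-(k+1)=i-k-\alpha_i$ to be negative so that each factor $q_i^{\binom{n-k}{i-k}}$ can be linearised (the paper does this directly via Remark \ref{rk1014} rather than through $-\log q_i$ and $1-e^{-t}$, a purely cosmetic difference), and then substituting. The only blemish is the intermediate line where you collapse the sum to $n^{\beta^\ast-(k+1)}/(i^\ast-k)!$ using the global $\beta^\ast$ and $i^\ast$; for $k>i^\ast$ the dominant index is $i^\ast_k$ with $\beta_{i^\ast_k}=\max\{\beta_k,\dots,\beta_r\}<\beta^\ast$, but since your final formula retains the full sum this slip is harmless.
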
 

\begin{proof} Note that for $k\ge k^\ast$ one has $k+1\ge k^\ast+1>\beta^\ast$ and hence for any $i=0, \dots, r$ we have 
$\beta_i< k+1$ which means that $i+1-\alpha_i<k+1$, i.e. $i-\alpha_i<k$. Next we observe that
$$q_i^{\binom {n-k}{i-k}} = 1 - \frac{n^{i-k-\alpha_i}}{(i-k)!}\cdot (1 + o(1)), \quad i=k, \dots, r,$$
since, as we mentioned above, all the exponents $i-k-\alpha_i$ are negative, and we may apply Remark \ref{rk1014}. Substituting this into (\ref{expectationk}) we obtain 
(\ref{expectationkk}). 
\end{proof}

\begin{corollary} \label{lm105} For any $k\ge k^\ast$ one has 
\begin{eqnarray}
\mathbb E(f_k) = \frac{1}{(k+1)!(i^\ast_k-k)!}\cdot n^{\beta^\ast_k}\cdot(1+o(1)),
\end{eqnarray}
where $$\beta^\ast_k=\max\{\beta_k, \beta_{k+1}, \dots, \beta_r\}$$ and $i^\ast_k$ is the unique integer $k\le i^\ast_k\le r$ such that 
$\beta_{i^\ast_k} = \beta^\ast_k$. 
\end{corollary}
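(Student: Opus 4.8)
The plan is to derive Corollary~\ref{lm105} directly from Lemma~\ref{lm106} by identifying the dominant term in the sum $\sum_{i=k}^r n^{\beta_i}/(i-k)!$. First I would recall that, by definition, $\beta^\ast_k=\max\{\beta_k,\dots,\beta_r\}$, and that by the genericity assumption (d) the maximum is attained at a \emph{unique} index $i^\ast_k$ with $k\le i^\ast_k\le r$. Since all the $\beta_i$ for $k\le i\le r$ are distinct, there is a fixed gap $\delta>0$ (independent of $n$) with $\beta_{i^\ast_k}-\beta_i\ge\delta$ for every $i\ne i^\ast_k$ in the range; this needs to hold only for fixed $r$, so finitely many such gaps are bounded below.

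Next I would factor out the dominant power $n^{\beta^\ast_k}$ from the sum appearing in \eqref{expectationkk}:
\begin{equation*}
\sum_{i=k}^r \frac{n^{\beta_i}}{(i-k)!} = n^{\beta^\ast_k}\left(\frac{1}{(i^\ast_k-k)!} + \sum_{\substack{i=k\\ i\ne i^\ast_k}}^r \frac{n^{\beta_i-\beta^\ast_k}}{(i-k)!}\right).
\end{equation*}
Each term in the residual sum is bounded by $n^{-\delta}/(i-k)!\to 0$, and there are at most $r+1$ of them, so the parenthesised expression equals $\frac{1}{(i^\ast_k-k)!}(1+o(1))$. Substituting back into \eqref{expectationkk} and absorbing this $(1+o(1))$ into the one already present there gives
\begin{equation*}
\mathbb{E}(f_k) = \frac{1}{(k+1)!}\cdot\frac{n^{\beta^\ast_k}}{(i^\ast_k-k)!}\cdot(1+o(1)),
\end{equation*}
which is exactly the claimed formula.

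I do not expect any real obstacle here: the only subtlety is making sure the uniqueness of $i^\ast_k$ (hence the strict separation of exponents) is invoked, which is precisely what assumption (d) guarantees, and that $r$ is fixed so that the number of subleading terms does not grow with $n$. One should also note in passing that $k^\ast\le i^\ast$ and the analogous $\beta^\ast_k\le\beta^\ast$ follow from the definitions, though they are not strictly needed for the computation. The rest is bookkeeping with the product of $(1+o(1))$ factors.
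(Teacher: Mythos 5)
Your proposal is correct and is precisely the argument the paper intends: the paper's own proof simply states that the corollary "follows automatically" from Lemma \ref{lm106} together with assumption (d), which guarantees the uniqueness of the maximizing index, and your write-up just fills in the routine extraction of the dominant term. No issues.
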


\begin{proof} This follows automatically from Lemma \ref{lm106}. Here we also use our assumption (d) 
(saying that $\alpha_i-\alpha_j\notin \Z$) which guarantees uniqueness of the maximum. 
\end{proof}

Note that $\beta^\ast_{k^\ast} =\beta^\ast$ since $k^\ast \le i^\ast$. 
Besides,  $k^\ast<\beta^\ast<k^\ast+1$ and for $k>i^\ast$ one has $\beta_k^\ast<\beta^\ast$. 

\begin{theorem}\label{thm107}
Denoting the rate of exponential growth 
$$\gamma_k \, := \, \lim_{n\to \infty}\frac{\log \mathbb E(f_k)}{\log n} $$
we have 
$$\gamma_k = \gamma_k(\alpha) =\left\{
\begin{array}{lll}
k+1, &\mbox{for} & k<k^\ast, \\
\beta^\ast, &\mbox{for} & k^\ast\le k\le i^\ast,\\
\beta_k^\ast&\mbox{for}& k>i^\ast.\end{array}
\right.$$
In particular, the value of $\gamma_k$ is constant, maximal and is equal to $\beta^\ast$ for all $k$ satisfying $k^\ast\le k\le i^\ast$.
\end{theorem}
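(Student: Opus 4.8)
The plan is to compute $\gamma_k$ case-by-case, directly from the expectation formulae already established. The three regimes $k<k^\ast$, $k^\ast\le k\le i^\ast$, and $k>i^\ast$ are handled by different results proved above, so the proof is essentially a matter of assembling them.

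First, for $k<k^\ast$: by Lemma \ref{lm101}, a random complex $Y$ contains the full $\ell$-skeleton of $\Delta_n$ a.a.s. for every $\ell<k^\ast$, and in particular $f_k(Y)=\binom{n+1}{k+1}$ with probability at least $1-n^c\exp(-n^{\{\beta^\ast\}})$. Since $f_k$ is bounded above by $\binom{n+1}{k+1}$ deterministically, and equals it with probability tending to $1$ exponentially fast, we get
$$\mathbb E(f_k) = \binom{n+1}{k+1}\cdot\bigl(1-o(1)\bigr) = \frac{n^{k+1}}{(k+1)!}\cdot(1+o(1)),$$
so $\gamma_k = k+1$. (One should note that the small probability of the complementary event contributes at most $\binom{n+1}{k+1}\cdot n^c\exp(-n^{\{\beta^\ast\}})$, which is negligible.) Then I would invoke Corollary \ref{lm105} for the remaining two cases: for any $k\ge k^\ast$ one has $\mathbb E(f_k) = \frac{1}{(k+1)!(i^\ast_k-k)!}\, n^{\beta^\ast_k}(1+o(1))$, hence $\gamma_k = \beta^\ast_k$. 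It remains only to identify $\beta^\ast_k$ in the two subranges. For $k^\ast\le k\le i^\ast$: since $i^\ast\le r$ and $\beta_{i^\ast}=\beta^\ast$ is the global maximum of $\{\beta_0,\dots,\beta_r\}$, and since $k\le i^\ast$ means $i^\ast$ is still in the range $\{k,\dots,r\}$, we get $\beta^\ast_k=\max\{\beta_k,\dots,\beta_r\}=\beta^\ast$; so $\gamma_k=\beta^\ast$. For $k>i^\ast$: by definition $\beta^\ast_k=\max\{\beta_k,\dots,\beta_r\}$, which is exactly the stated value $\beta_k^\ast$, giving $\gamma_k=\beta_k^\ast$.

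Finally, for the last sentence of the statement — that $\gamma_k$ is constant, maximal and equal to $\beta^\ast$ on $k^\ast\le k\le i^\ast$ — I would argue as follows. Constancy and the value $\beta^\ast$ on this range follow from the computation just done. For maximality: when $k<k^\ast$ we have $\gamma_k=k+1\le k^\ast<\beta^\ast$ (using $k\le k^\ast-1$ and $k^\ast<\beta^\ast$); when $k>i^\ast$ we have $\gamma_k=\beta_k^\ast=\max\{\beta_k,\dots,\beta_r\}<\beta^\ast$, since the (unique, by assumption (d)) index achieving $\beta^\ast$ is $i^\ast$, which is not in $\{k,\dots,r\}$ once $k>i^\ast$. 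Hence $\gamma_k<\beta^\ast$ outside the critical range, and $\gamma_k=\beta^\ast$ inside it.

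I do not expect a serious obstacle here: the theorem is a corollary of Lemma \ref{lm101} and Corollary \ref{lm105}. The only point requiring a little care is the $k<k^\ast$ case, where one must pass from the high-probability statement of Lemma \ref{lm101} to a statement about the expectation $\mathbb E(f_k)$ — this uses the deterministic upper bound $f_k\le\binom{n+1}{k+1}$ together with the exponentially small failure probability, so the contribution of the bad event is $o$ of the main term. Everything else is bookkeeping with the definitions of $\beta^\ast$, $i^\ast$, $k^\ast$, and the genericity assumptions (c), (d) that guarantee the relevant inequalities are strict and the maximizing index is unique.
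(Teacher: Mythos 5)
Your proof is correct and follows essentially the same route as the paper, which likewise deduces the $k\ge k^\ast$ cases from Lemma \ref{lm106}/Corollary \ref{lm105} and the $k<k^\ast$ case from the a.a.s.\ statement $f_k=\binom{n+1}{k+1}$. Your explicit handling of the passage from the a.a.s.\ statement to $\mathbb E(f_k)$ (via the deterministic bound $f_k\le\binom{n+1}{k+1}$ and the exponentially small failure probability) is a detail the paper's terse proof leaves implicit, and it is handled correctly.
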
 
\begin{proof} This follows from Corollary \ref{lm105} (for $k\ge k^\ast$). Besides, for $k< k^\ast$ we know that $f_k=\binom {n+1} {k+1}$, a.a.s. by Lemma \ref{lm101}. 
\end{proof}

The spread $s=s(\alpha)$ is the length of the flat maximum of the graph of the function $k\mapsto \gamma_k$.

%
%We see that the graph of the function $k\mapsto \gamma_k$ looks as shown on Figure \ref{gammak}. The function has slope 1 and is increasing until $k^\ast -1$, it also increases at $k^\ast$ and then it becomes flat between $k^\ast$ and $i^\ast$; it decreases after 
%$i^\ast$. 

We have 
$$\gamma_0< \gamma_1< \dots< \gamma_{k^\ast}= \dots= \gamma_{i^\ast}> \gamma_{k^\ast+1}\ge  \dots \ge \gamma_r$$

Note that in the case when the spread is zero, $k^\ast=i^\ast$, the sequence of exponents 
$$\gamma_0< \gamma_1< \dots<\gamma_{k^\ast-1} < \gamma_{k^\ast}> \gamma_{k^\ast+1}\ge  \dots \ge \gamma_r$$
is {\it unimodal}. 
%\vskip 6cm
%\hskip -8cm
%\begin{figure}[h]
%\label{gammak.pdf}
%\begin{center}
%\resizebox{5cm}{2cm}
%{\includegraphics[scale=0.1]{gammak.pdf}}
%\end{center}
%\end{figure}

\begin{example}{\rm 
Consider the case when $r=1$ (random graphs with respect to the upper probability measure). In this case we have two exponents $\alpha_0$ and $\alpha_1$. Recall that $\beta_0=1-\alpha_0$, $\beta_1=2-\alpha_1$, $\beta^\ast=\max\{\beta_0, \beta_1\}$ and $k^\ast =\lfloor \beta^\ast \rfloor$. 

We see that $k^\ast<0$ happens when $\alpha_0>1$ and $\alpha_1>2.$

We shall consider following three cases:

A) $k^\ast=0$ and $i^\ast=0$.

B) $k^\ast=0$ and $i^\ast=1$.

C) $k^\ast=1$ and $i^\ast=1$.

Case A) happens when $1-\alpha_0>2-\alpha_1$ and $1-\alpha_0>0$. This can be summarised by 
$\alpha_0<1$ and $\alpha_1>1+\alpha_0$.

Case B) can be characterised by the inequalities $0<\beta_0<\beta_1<1$ which can be rewritten as $\alpha_0<1$ and $1<\alpha_1<1+\alpha_0$.

%$$1< \alpha_1<\min\{2, 1+\alpha_0\}.$$

In the case C) we have the inequalities: $1-\alpha_0<2-\alpha_1$ and $1<2-\alpha_1$. These inequalities reduce to the condition $0<\alpha_1<1$. 

Note that in cases A and C the spread is 0 and in case B the spread is 1. 
}
\end{example}

\begin{example}\label{spr} {\rm 
One can characterise the vectors $\alpha=(\alpha_0, \alpha_1, \dots, \alpha_r)$ with zero spread $s(\alpha)=0$ as follows. The index $i^\ast\in \{0, 1, \dots, r\}$ of the critical dimension satisfies 
$$\beta_{i^\ast}=\max\{\beta_0, \dots, \beta_r\}, \quad \lfloor \beta_{i^\ast}\rfloor = i^\ast.$$
In view of the definition $\beta_i=i+1-\alpha_i$ we see that $s(\alpha)=0$ is equivalent to 
\begin{eqnarray}\label{istar}\alpha_{i^\ast}<1, \quad \mbox{and}\quad \alpha_{i^\ast +k}>\alpha_{i^\ast} +k,\end{eqnarray}
for all $k=1, \dots, r-i^\ast$. 

From (\ref{istar}) we see that $i^\ast$ is the largest index satisfying $\alpha_{i^\ast}<1.$ However this condition alone is not sufficient. 

}
\end{example}

\begin{example}{\rm
Consider the case $r=2$ (two dimensional random simplicial complexes in the upper model). Using Example \ref{spr} we find that the vectors of exponents 
$\alpha=(\alpha_0, \alpha_1, \alpha_2)$ with zero spread are as follows:

$A_0).$ If the critical dimension is zero $k^\ast=0$ then $s=0$ is equivalent to $\alpha_0<1$ and $\alpha_1>1+\alpha_0$ and $\alpha_2>2+\alpha_0.$

$A_1).$ If the critical dimension is one, $k^\ast=1$, then $s=0$ is equivalent to $\alpha_1<1$ and $\alpha_2>1+\alpha_1$. 

$A_2).$ If the critical dimension $k^\ast=2$ then $s=0$ is equivalent to $\alpha_2<1.$

}
\end{example}

\section{Betti numbers in the upper model} 

\begin{theorem}\label{thmfk} Consider a random simplicial complex $Y\in \Omega_n^\ast$ with respect to the upper probability measure $\op$. 
We shall assume that the probability parameters $p_\sigma$ vanish for $\dim \sigma >r$ and for $\dim \sigma\le r$ they 
have the form $p_\sigma=n^{-\alpha_i}$, where $i=\dim \sigma$, and the exponents $\alpha_i>0$ satisfy the genericity assumptions (a) - (d), see \S \ref{101}. We shall also assume that the critical dimension $k^\ast\ge 0$ is non-negative. 
Let $f_k:\Omega_n^\ast\to \Z$ denote the random variable counting the number of $k$-dimensional faces of a random complex, where $k=0, 1, \dots, r$. Then there exists a sequence of real numbers $t_n\to 0$ such that for any $k\ge k^\ast$ one has
\begin{eqnarray}\hskip 1.5cm
\label{between2}
%(1-t_n)\cdot \frac{n^{\gamma_k(\alpha)}}{(k+1)!\cdot (i^\ast_k-k)!} \le 
f_k(Y) \le (1+t_n)\cdot \frac{n^{\gamma_k(\alpha)}}{(k+1)!\cdot (i^\ast_k-k)!},
\end{eqnarray}
a.a.s. The exponent $\gamma_k(\alpha)$ is defined in Theorem \ref{thm107} and the integer $i^\ast_k\in \{k, k+1, \dots, r\}$ is defined by $\beta_{i^\ast_k}=\max\{\beta_k, \beta_{k+1}, \dots, \beta_r\}$. 

\end{theorem} 

Recall that $\beta_i$ denotes $i+1-\alpha_i$ and $k^\ast$ denotes the critical dimension, see Definition \ref{def102}.
\begin{proof}
Consider a random hypergraph $X\in \Omega_n$ with probability parameters 
$$p_\sigma = \left\{
\begin{array}{ll}
n^{-\alpha_i}, & \mbox{for}\quad \dim\sigma =i \le r, \\ \\

0, & \mbox{for}\quad \dim\sigma>r.
\end{array}\right.
$$
For $k=0, 1, \dots, r$ we shall denote by $g_k:\Omega_n\to \Z$ the random variable counting the number of $k$-dimensional faces. 
Note that $g_k$ is a binomial random variable ${\rm Bi}(\binom {n+1}{k+1}, n^{-\alpha_k})$ and hence we obviously have 
\begin{eqnarray}\label{43}
\E(g_k)= \binom {n+1} {k+1} \cdot n^{-\alpha_k}\, = \, \frac{n^{\beta_k}}{(k+1)!} \cdot (1+o(1)), \quad \quad k\le r.
\end{eqnarray} 
Thus, using the first moment method (the Markov inequality),  we see that for $\beta_k <0$ one has $g_k\equiv 0$, a.a.s. 

Note that our genericity assumptions of \S \ref{101} exclude the possibility $\beta_k=0$. 

Below we shall assume that $\beta_k> 0$. 

We may use the Chernoff bound (see  \cite{JLR}, Theorem 2.1), as we did earlier in \S \ref{sec9} to obtain
%
%which states that for any $\tau\ge 0$, 
%$$\Bbb P(g_k\ge \E(g_k) +\tau) \le \exp\left(-\frac{\tau^2}{2(\E(g_k) +\tau/3)}\right), $$ and 
%
%$$\Bbb P(g_k\le \E(g_k) -\tau) \le \exp\left(-\frac{\tau^2}{2\E(g_k)}\right), $$
%
%
%Next we will apply Chernoff's bound with $\tau = \Bbb E(g_k)^{2/3} =t_n\Bbb E(g_k)$ 
%where $t_n = \Bbb E(g_k)^{-1/3} =o(1)$. 
%%
%%
%%is a sequence such that $n^{k+1}\cdot p_k\cdot t_n^2\to\infty$. Remark that such a $t_n$ certainly exists, take for example $t_n = \dfrac{1}{\sqrt[3]{n^{k+1}\cdot p_k}}$.
%The Chernoff's bounds give us
%$$\Bbb P(g_k \geq (1+t_n)\cdot\Bbb E(g_k))\leq \exp\left(-\frac{\tau^2}{2(\Bbb E(g_k) +\tau/3)} \right)\leq \exp\left(-\frac{\Bbb E(g_k)^{1/3}}{4}\right)$$
%
%%\exp\left(-\frac{t_n^2\cdot\Bbb E(g_k)}{2(1+t_n)}\right)\leq \exp\left(-\dfrac{t_n^2\cdot\Bbb E(g_k)}{4}\right),$$
%and
%$$\Bbb P(g_k \leq (1-t_n)\cdot\Bbb E(g_k))\leq \exp\left(-\frac{\Bbb E(g_k)^{1/3}}{2}\right).$$
%Since $\Bbb E(g_k)^{1/3} \to \infty$ we obtain that
%\begin{eqnarray}\label{between}
%(1-t_n) \cdot \Bbb E(g_k) \, \le\,  g_k \, \le\,  (1+t_n) \cdot \Bbb E(g_k),
%\end{eqnarray}
%a.a.s. Combining with (\ref{43}) we obtain 

\begin{eqnarray}\label{between1}
(1-t_n) \cdot \frac{n^{\beta_k}}{(k+1)!}  \, \le\,  g_k \, \le\,  (1+t_n) \cdot \frac{n^{\beta_k}}{(k+1)!},
\end{eqnarray}
a.a.s., where $t_n\to 0$. 

Denote by $\Omega_{n, r}$ the set of hypergraphs $X\subset \Delta_n$ of dimension $\le r$. Similarly, denote by $\Omega^\ast_{n, r}$ the set of all simplicial subcomplexes $Y\subset\Delta_n$ of dimension $\le r$. We have the map 
$$\overline\mu_r: \Omega_{n, r}\to \Omega^\ast_{n, r}$$ 
which is the restriction of the map which appears in (\ref{maps}). Recall that for $X\in \Omega_{n, r}$ the simplicial complex $\overline \mu_r(X)$ is the minimal simplicial complex $Y$ containing $X$. In other words, $Y$ is obtained from $X$ by adding all faces of all simplexes of $X$. 

Since we assume that $p_\sigma=0$ for all simplexes $\sigma$ of dimension $>r$ we obtain that the measure $\Bbb P_n$ (given by (\ref{prob0})) is supported on $\Omega_{n, r}\subset \Omega_{n}$. Hence, we obtain that the upper measure $\op$ on $\Omega^\ast_{n, r}$ coincides with the direct image $(\overline \mu_r)_\ast(\Bbb P_n)$. 

For any $k=0, 1, \dots, r$ we have two random variables $g_k: \Omega_{n, r}\to \Z$ and $f'_k = f_k\circ \overline \mu_r: \Omega_{n, r}^\ast \to \Z$. From the structure of the map $\overline\mu_r$ we obtain the following inequality
\begin{eqnarray}
%\max_{i=k, \dots, r} \left\{\binom {i+1}{k+1} g_i \right\} \, \le \,
 f'_k \, \le \, \sum_{i=k}^r \binom {i+1}{k+1} g_i
\end{eqnarray}
and combining with (\ref{between1}) we find
\begin{eqnarray*}
%{}\hskip 1cm 
%(1- o(1))\cdot \frac{n^{\gamma_k}}{(k+1)!\cdot (i^\ast_k-k)!} \, \le\,  
f'_k \, \le \, (1+o(1))\cdot \frac{n^{\gamma_k}}{(k+1)!\cdot (i^\ast_k-k)!},
\end{eqnarray*}
a.a.s. By the definition, $\op=\overline {\mu_r}_\ast(\Bbb P_n)$, and hence the above inequality implies (\ref{between2}). 
\end{proof} 

Next we describe the Betti numbers of the random simplicial complexes in the upper model with extra 
assumption of vanishing of the spread $s(\alpha)=0$. 

\begin{theorem}\label{thmcrit1}\label{thmbetti2}  Consider a random simplicial complex $Y\in \Omega_n^\ast$ with respect to the upper probability measure $\op$. 
Assume that the probability parameters $p_\sigma$ vanish for $\dim \sigma >r$ and for $\dim \sigma\le r$ they 
have the form $p_\sigma=n^{-\alpha_i}$, where $i=\dim \sigma$, and the exponents $\alpha_i>0$ satisfy the genericity assumptions (a) - (d), see \S \ref{101}. We shall also assume that the critical dimension $k^\ast\ge 0$ is non-negative and the spread vanishes, 
$s(\alpha)=0.$
Then:

(i) the reduced Betti numbers in dimensions below the critical dimension vanish, i.e. $\tilde b_i(Y)=0,$ a.a.s. for $i<k^\ast$. 

(ii) the Betti number in the critical dimension $b_{k^\ast}(Y)$ dominates all other Betti numbers, a.a.s. 
More precisely, there exist constants $A, B>0$ such that 
\begin{eqnarray}\label{between3}
A\cdot n^{\beta^\ast} \le\,  b_{k^\ast}(Y) \le B\cdot n^{\beta^\ast},
\end{eqnarray}
a.a.s. and for any $j>k^\ast$ there exists $\epsilon_j>0$ such that 
\begin{eqnarray}\label{48}
b_j(Y) < n^{-\epsilon_j}\cdot b_{k^\ast}(Y), 
\end{eqnarray}
a.a.s.
\end{theorem}

%
%We prove below that under the assumptions of Theorem \ref{thmcrit1} the reduced Betti numbers $\tilde b_k(Y)$ in dimensions below the critical dimension $k<k^\ast$ vanish, a.a.s.

\begin{proof} We noted in Example \ref{spr} that vectors $\alpha \- = \-(\alpha_0, \dots, \alpha_r)$ with the property that their spread is zero, $s(\alpha)=0$, are characterised by the inequalities 
\begin{eqnarray}\label{sprd0}
\alpha_{i^\ast}<1 \quad \mbox{and}\quad \alpha_{i^\ast+j}>\alpha_{i^\ast}+j,
\end{eqnarray}
for any $j=1, \dots, r-i^\ast$. Here $i^\ast$ is the index satisfying 
 $\beta_{i^\ast}= \max\{\beta_j; j=0, 1, \dots, r\}$. 
 
 Consider the pure $k$-dimensional random complex $Z_k$ with respect to the upper measure with probability parameter $p=n^{-\alpha}$ where 
 $$k=i^\ast= k^\ast\quad \mbox{and}\quad \alpha=\alpha_{i^\ast},$$ as we studied in \S \ref{sec8} above.  The complex $Z_k$ is naturally imbedded into $Y$. Since $\alpha\in (0,1)$ the results of \S \ref{sec9} are applicable. In particular, we see that $Z_k$ contains the full $(k-1)$-dimensional skeleton, and therefore the same is true with respect to $Y$. We know from Corollary \ref{lm93} that the reduced Betti numbers of $Z_k$ in dimensions $<k$ vanish. The complex $Y$ is obtained from $Z_k$ by adding simplexes of dimensions $\ge k$ and this clearly may not create cycles in dimension $<k$; thus, statement (i) follows. 
 
 Next we consider the Betti number $b_k(Y)$ in the critical dimension. Corollary \ref{cor92} gives an estimate
 \begin{eqnarray}\label{bky}
 b_k(Z_k) = (1+o(1))\cdot \frac{n^{k+1-\alpha}}{(k+1)!}.\end{eqnarray}
 The exponent $k+1-\alpha$ equals $\beta^\ast=\max\{\beta_0, \beta_1, \dots, \beta_r\}$. 
 Since $Z_k$ is a subcomplex of $Y^{(k)}$ the $k$-dimensional Betti number of $Y$ can be reduced when simplexes of 
 dimension $k+1$ are added. From the Morse inequalities we have 
 \begin{eqnarray}\label{morse}
  b_k(Z_k) - f_{k+1}(Y) \, \le\,  b_k(Y) \, \le\,  f_k(Y).\end{eqnarray}
  To explain the left inequality (\ref{morse}) we note that the exact sequence of homology groups
  $$H_{k+1}(Y, Z_k) \to H_k(Z_k) \to H_k(Y)$$
  (which is a part of the long exact sequence of the pair $(Y, Z_k)$) gives the inequality
  $$b_k(Y) \ge b_k(Z_k)-b_{k+1}(Y,Z_k)$$
  where (by the Morse inequality) we have $$b_{k+1}(Y, Z_k) \le f_{k+1}(Y, Z_k) = f_{k+1}(Y).$$
  Combining these two inequalities gives the left inequality of (\ref{morse}).

 We can use Theorem \ref{thmfk} to estimate $f_{k}(Y)$ and $f_{k+1}(Y)$ from above. Inequality (\ref{between2}) gives 
 \begin{eqnarray}\label{less}
 f_k(Y)\le C\cdot n^{\beta^\ast} \quad \mbox{and }\quad
 f_{k+1}(Y) \le C'\cdot n^{\beta^\ast_{k+1}},\end{eqnarray}
 a.a.s. Recall that the exponent $\beta^\ast_{k+1}$ is defined as $\max\{\beta_{k+1}, \dots, \beta_r\}$ which is strictly smaller than $\beta^\ast$. Combining the inequalities (\ref{bky}), (\ref{morse}) and (\ref{less}) we obtain
 \begin{eqnarray}
 A\cdot n^{\beta^\ast} \le b_k(Y) \le B\cdot n^{\beta^\ast},
 \end{eqnarray}
 a.a.s., 
 proving (\ref{between3}). 
 
 Finally, to prove (\ref{48}), consider an arbitrary $j>k$. Then $b_j(Y)\le f_j(Y)$ by the Morse inequality, and $f_j(Y)\le n^{\beta_j^\ast}$ by 
 (\ref{between2}). Here $\beta_j^\ast$ is defined as $\max\{\beta_j, \beta_{j+1}, \dots, \beta_r\}$. By definition, 
 we have $\beta_j^\ast< \beta^\ast$. Setting $\epsilon_j = (\beta^\ast -\beta^\ast_j)/2$ we have 
 $$b_j(Y) \le n^{\beta_j^\ast} < n^{-\epsilon_j} \cdot An^{\beta^\ast}\le n^{-\epsilon_j} \cdot b_k(Y),$$
 a.a.s.
This completes the proof. 
\end{proof}

\end{document}